 \def\NW{{\sf NW}}\def\NN{{\sf N}}
 \def\SE{{\sf SE}}\def\EE{{\sf E}}
 \def\SS{{\sf S}}\def\NE{{\sf NE}}
 \def\WW{{\sf W}}\def\SW{{\sf SW}}
\def\section{\@startsection{section}{1}%
 \z@{.7\linespacing\@plus\linespacing}{.5\linespacing}%
 {\normalfont\bfseries\scshape\centering}}
\def\subsection{\@startsection{subsection}{2}%
  \z@{.5\linespacing\@plus\linespacing}{.5\linespacing}%
  {\normalfont\bfseries\scshape}}
\def\subsubsection{\@startsection{subsubsection}{3}%
 \z@{.5\linespacing\@plus\linespacing}{-.5em}
  {\normalfont\bfseries\itshape}}
\newtheorem{Theorem}{Theorem}
\newtheorem{Lemma}[Theorem]{Lemma}
\newtheorem{Proposition}[Theorem]{Proposition}
\newtheorem{Corollary}[Theorem]{Corollary}
\def\qed{$\hfill{\vrule height 3pt width 5pt depth 2pt}$}
\newcommand{\ns}{\mathbb{N}}
\newcommand{\zs}{\mathbb{Z}}
\newcommand{\qs}{\mathbb{Q}}
\newcommand{\cs}{\mathbb{C}}
\newcommand{\KL}{\mathbb{K}}
\newcommand{\PL}{\mathbb{P}}
\newcommand{\cS}{\mathcal S}
\newcommand{\cA}{\mathcal A}
\newcommand{\cQ}{\mathcal Q}
\newcommand{\bx}{\bar x}
\newcommand{\by}{\bar y}
\newcommand{\cchi}{\bar \chi}
\newcommand{\beq}{\begin{equation}}
\newcommand{\eeq}{\end{equation}}
\newcommand{\gf}{generating function}
\newcommand{\gfs}{generating functions}
\newcommand{\fps}{formal power series}
\DeclareMathOperator{\comp}{\Theta}
\DeclareMathOperator{\Id}{Id}
\newcommand{\sign}{{\rm sign} }
\newcommand{\sym}{{\rm sym} }
\newcommand{\val}{{\rm val} }
\def\emm#1,{{\em #1}}
\newcommand{\TE}[7]{
#1 & #3
&
 \begin{minipage}{0.8cm}\mbox{}\\

\includegraphics[height=0.8cm]{#2}\\   

\end{minipage}
&\begin{minipage}{6cm}\tiny\mbox{}\\#4\\\hrule\mbox{}\\#5\\\end{minipage} 
& #7 
\\
}
\newcommand{\TsE}[7]{
\TE{#1}{#2}{%
\multirow{2}{*}{%
\begin{minipage}{2.5cm}
\vfill
#3   
\vfill
\end{minipage}}}{#4}{#5}{#6}{#7}
}
\newcommand{\TtE}[7]{
\TE{#1}{#2}{%
\multirow{2}{*}{%
\begin{minipage}{2.5cm}
\vfill
#3   
\vfill
\end{minipage}}}{#4}{#5}{#6}{#7}
}
\newcommand{\tpic}[1]{
\begin{minipage}{0.8cm}
\mbox{}\\\includegraphics[height=0.8cm]{#1}
\end{minipage}} 
\newcommand{\tabB}[5]{
 \StepSet{#2}&$  #3$ & \begin{minipage}{3cm} {\small$  #4$}\end{minipage}&\begin{minipage}{7cm} {\tiny$  #5$}\end{minipage}\\ \hline
}
\newcommand{\StepSet}[1]{
\begin{minipage}{0.5cm}\mbox{}\\
\includegraphics[height=0.3cm]{#1}\\   
\end{minipage}
}
\begin{document}
\title[Walks with small steps in the quarter plane]
{Walks with small steps in the quarter plane}

\author{Mireille Bousquet-M\'elou}
\address{MBM: CNRS, LaBRI, Universit\'e Bordeaux 1, 351 cours de la Lib\'eration,
  33405 Talence Cedex, France}
\email{mireille.bousquet@labri.fr}
\thanks{MBM was supported by the French ``Agence Nationale
de la Recherche'', project SADA ANR-05-BLAN-0372.}
\author{Marni Mishna}
\address{MM: Dept. Mathematics, Simon Fraser University, 8888 University Drive, Burnaby, Canada}
\email{mmishna@sfu.ca}
\thanks{MM was supported by a Canadian NSERC Discovery grant}

\begin{abstract}
Let $\cS\subset \{-1,0,1\}^2\setminus\{(0,0)\}$. We address the enumeration of 
plane lattice walks with steps
in $\cS$,  that start from $(0,0)$ and always remain in the first
quadrant $\{(i,j): i\ge 0, j\ge 0\}$. \emm A priori,, there are   $2^8$
problems of this type, but some  are trivial. Some others are equivalent
to a model of walks confined to a half-plane: such models can be
solved systematically using the kernel method, which leads to
 algebraic \gfs. We focus on the remaining cases, and show that
there are  79  inherently different problems to study. 

To each of
them, we associate a group $G$ of birational transformations. We show
that this group is finite (of order at most 8) in 23 cases, and
infinite in the 56 other cases. 

We present a unified way of solving 22 of the 23 models associated
with a finite group. For all of them, the \gf\ is found to be D-finite. The 23rd
model, known as Gessel's walks, has recently been proved by Bostan
\emm et al., to have an
algebraic (and hence D-finite) solution. 
%
We conjecture that the remaining 56 models, associated with an
infinite group, have a non-D-finite \gf.

Our approach allows us to  recover and refine some known results, and
also to obtain new results. For instance, we prove that walks with \NN, \EE, \WW, \SS,
\SW \ and \NE\ steps have an algebraic  \gf.

\end{abstract}
\maketitle

\date{\today}

\section{Introduction}
The enumeration of lattice walks is a classical topic in
combinatorics. 
Many combinatorial objects (trees, maps, permutations, lattice
polygons, Young tableaux, queues...) can be encoded by lattice walks, so
that lattice path enumeration has many applications.
Given a  lattice, for instance the hypercubic
lattice $\zs^d$, and a finite set of steps $\cS\subset \zs^d$, a
typical problem is to determine how many $n$-step walks with steps
taken from~$\cS$, starting from 
the origin, are confined to a certain region $\cA$ of the space.
If $\cA$ is the whole space, then the length \gf\ of these
walks is a simple rational series. If $\cA$ is a half-space, bounded
by a rational hyperplane, the associated \gf\ is an algebraic
series. Instances of the latter problem have been studied in many
articles since at least the end of the 19th
century~\cite{andre,bertrand}. It is now understood that the \emm
kernel  method, provides a systematic solution to all such
problems, which are, in essence,
one-dimensional~\cite{bousquet-petkovsek-recurrences,banderier-flajolet}.  
Other generic approaches to half-space problems are provided
in~\cite{Duchon98,gessel-factorization}. 

A natural next class of problems is the enumeration of walks
constrained to lie in the intersection of two rational
half-spaces 
--- typically, in the quarter plane.
Thus far, a number of instances have been solved, but no 
unified approach  has emerged yet, and the problem is far from being
completely understood. The generating functions that have been found
demonstrate a more complicated structure than those of half-space problems.
Some examples are algebraic, but for reasons that are poorly
understood
combinatorially~\cite{gessel-proba,Bous05,Mishna-jcta,BoKa08}. Some
examples are, more generally, D-finite, meaning that the \gf\ satisfies a 
linear differential equation with polynomial
coefficients~\cite{guy-bijections,BoPe03,bousquet-versailles,poulalhon-schaeffer}. Some
examples are not  D-finite,  having infinitely many singularities in the complex plane~\cite{BoPe03,Mishna-Rechni}.

We focus in this paper on  walks in the plane confined to the first
quadrant.
The four examples of Figure~\ref{fig:ex}  illustrate the complexity of
this problem:
\begin{itemize}
\item  {\bf Kreweras' walks (steps \WW, \SS, and \NE)}: these were first counted
  in 1965 by Kreweras~\cite{kreweras}. He obtained  a complicated
  expression for  the  number of $n$-step walks ending at $(i,j)$,
  which simplifies drastically when $j=0$. 
It  was then proved by Gessel 
that the associated 3-variable \gf\ is
algebraic~\cite{gessel-proba}. Since then, simpler derivations of this
series have been obtained~\cite{niederhausen-ballot,Bous05}, including
an automated proof~\cite{KaZe07}, and a purely bijective one for walks
ending at the origin~\cite{Bern07}.  
See also~\cite{flatto-hahn,fayolle-livre,Bous05}, where the
stationary distribution of a related Markov chain in the quarter plane
is obtained, and found to be algebraic.
\item {\bf Gessel's walks (steps \EE, \WW, \NE\ and \SW)}: 
Around 2001,
Gessel conjectured  a simple hypergeometric formula for the number of
$n$-step walks ending at the origin. This conjecture was  proved 
 recently by  Kauers, Koutschan and Zeilberger~\cite{KaKoZe08}. Even more
  recently, Bostan and Kauers proved
 that the associated  3-variable
  \gf\ (which counts all quarter plane walks by the length and the coordinates of
  the endpoint) is in
  fact algebraic~\cite{BoKa08}. Strangely enough, the simple numbers
  conjectured by Gessel had not been recognized as the coefficients
  of an algebraic series before.  Both approaches involve, among other
  tools, heavy   computer algebra calculations.
\item {\bf Gouyou-Beauchamps's walks (steps \EE, \WW, \NW\ and \SE)}:
  Gouyou-Beauchamps discovered in 1986 a simple hypergeometric formula for walks
  ending  on the $x$-axis~\cite{gouyou-chemins-montreal}. We
  derive in this paper similar expressions for the total number of
  walks, and for those ending at a prescribed position. The associated
  series are D-finite, but transcendental.
These walks are
related to Young tableaux of height at most 4~\cite{gouyou-tableaux}.  An affine
deformation transforms them into square lattice walks (with \NN, \SS, \EE\ and
\WW\ steps) confined to  the wedge $0\le j \le i$. An
enumeration of these walks involving the number of visits to the
diagonal  appears in~\cite{buks,Nied05,niederhausen05-bis}. 
\item{\bf A non-D-finite case} is provided by walks with \NE, \NW\ and \SE\
  steps. Mishna and Rechnitzer established a complicated expression
  for the \gf\ of these walks, from which they were able to prove that
  this series has infinitely many singularities, and thus cannot be
  D-finite~\cite{Mishna-Rechni}. 
\end{itemize}
Observe that we have defined $\cS$, the set of steps, as a subset of
$\zs^2$ but that we  often use a more intuitive terminology,
referring to $(1,1)$ as a \NE\ step, for instance. 
We occasionally abuse the coordinate notation
 and directly write our steps using $x$'s and $y$'s, writing for example $x\by$
for a \SE\ step.

\medskip
\begin{figure}[hb] 
\center
\hskip -0mm\tpic{01001010}\hskip 35mm\tpic{01100110}\hskip
35mm\tpic{00110011}\hskip 30mm \tpic{01010001}
{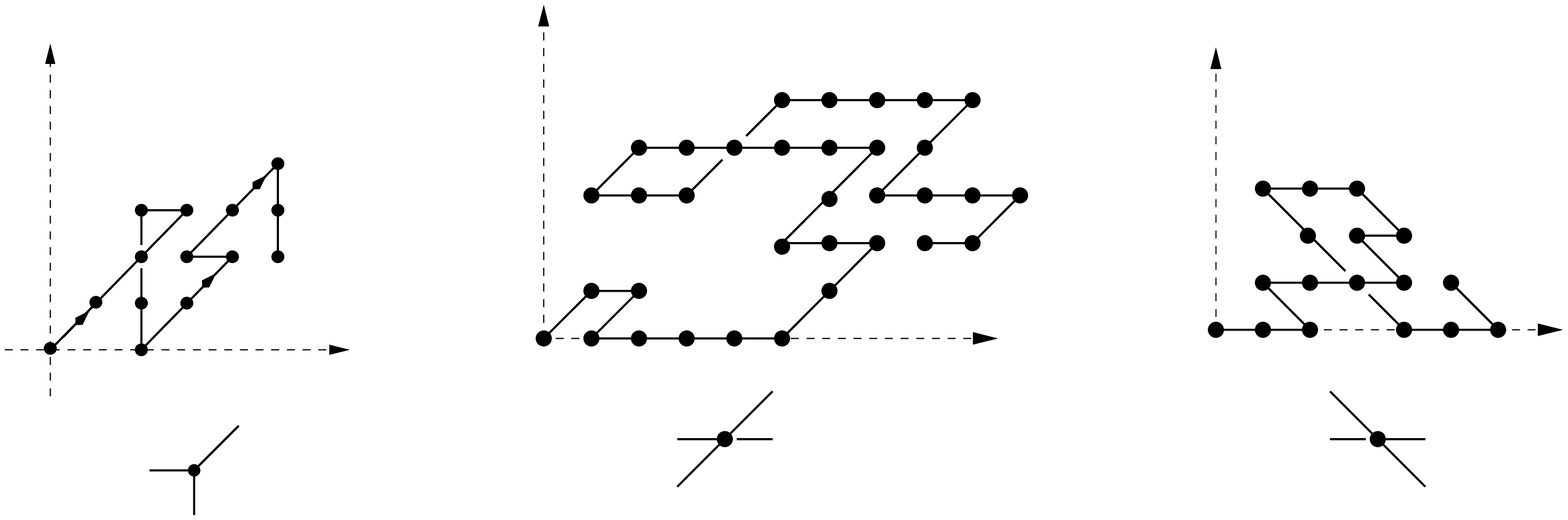}
\vskip 4mm
\caption{Four models of walks in the quarter plane: Kreweras' walks,
  Gessel's walks,  Gouyou-Beauchamps's walks, and the non-D-finite
  example of Mishna and Rechnitzer. The numbers $q(n)$
  count walks of length $n$ confined to the quarter plane that start
  and end at the origin. We have used the notation 
  $
(a)_n=a(a+1)\cdots (a+n-1)$.} 
\label{fig:ex}
\end{figure}

Ideally, we seek generic results or combinatorial conditions
which assure D-finite (or even algebraic) generating functions. 
One  criterion of this type states that if 
the set $\cS$ is invariant by reflection around a vertical
 axis (we  say for short that it has a vertical symmetry) 
and  consists of steps $(i,j)$ such that $|i|\le 1$, 
then the \gf\ of quarter plane walks with steps in $\cS$ is
D-finite~\cite{BoPe03,bousquet-versailles}.  This is also true when
 the set of steps is left invariant by a Weyl group and the walks are
 confined to a corresponding  Weyl chamber~\cite{gessel-zeilberger}.
We are not aware of any other such criteria.

 \subsection{Results}
We restrict our attention to the quarter plane and
\emm small, steps
(\emm i.e.,, the step set $\cS$ is a subset of
$\{-1,0,1\}^2\setminus\{(0,0)\}$). This includes  
the  four  examples above.  We
first  narrow down the $2^8$ possible cases to 79 distinct non-trivial
problems (Section~\ref{sec:models}).  Their step
sets $\cS$ are listed in Tables~\ref{tab:classesD2}
to~\ref{tab:infinite} in Section~\ref{sec:tables}. 
These problems fall 
into two categories, depending on
whether a certain group associated with $\cS$ is finite or
infinite (Section~\ref{sec:group}).
The 23 models associated with a  finite group turn out to be those
that satisfy at least one of the following  properties:
\begin{itemize}
\item[--] the step set possesses a vertical symmetry,
\item[--] the vector sum of the vectors in the step set is 0.
\end{itemize}

In Section~\ref{sec:tools} we develop certain general tools that apply to all
models ---in particular, we explain how to write for each of them
a functional equation that defines the \gf\ of the walks.
Then, we  describe  a \emm  uniform way, to solve this equation for all of
the models associated with a finite group, except one (Gessel's
walks).
The solutions are all D-finite, and even algebraic in three cases
(Sections~\ref{sec:orbit} and~\ref{sec:half-orbit}). 
Note that Gessel's walks are also known to have an algebraic
\gf~\cite{BoKa08}.
We conjecture that all the solutions to models with an infinite group
are non-D-finite. 
We conclude in Section~\ref{sec:questions} with
some comments and  
 questions. 
The tables of Section~\ref{sec:tables} list the 79 models, classified according
to the order of the corresponding group, and provide references
to both the existing 
literature and to the relevant result of this paper.

%
\subsection{Comments and detailed outline of the paper}
The following technical and/or  bibliographical comments may be of interest
to readers who have already worked on similar problems. One will also
find here a more detailed description of the contents of the paper.

The starting point of our approach is a functional equation
 defining the \gf\ $Q(x,y;t)$ that counts quarter plane walks
by the length (variable $t$) and the coordinates of the endpoint
(variables $x$ and $y$). This equation merely reflects a step by step
construction of quarter plane walks. For instance, the equation
obtained for Kreweras' walks (first example in
Figure~\ref{fig:ex}) reads:
$$
\left(1-t(1/x+1/y+xy)\right)Q(x,y;t)=1-t/x\,Q(0,y;t) -t/y\, Q(x,0;t).
$$
Note that there is no obvious way to derive  from the
above identity an equation for, say, $Q(0,0;t)$ or $Q(1,1;t)$. Following Zeilberger's terminology~\cite{zeil-umbral}, we say that the variables $x$ and
$y$ are \emm catalytic,.
One of our objectives is to provide some general principles  that may
be applied to 
any such  linear equation with two catalytic variables. The case of
linear equations 
with \emm one, catalytic variable is well-understood, and the
solutions are always algebraic~\cite{bousquet-petkovsek-recurrences}.

One key tool in our approach is a certain group $G(\cS)$ of birational
transformations that leaves the \emm
kernel, of the functional equation (that is, the coefficient of
$Q(x,y;t)$) unchanged (Section~\ref{sec:group}). We have borrowed this
group from \emm the little yellow book, 
by Fayolle, Iasnogorodski and Malyshev~\cite{fayolle-livre}, 
in which the
authors study the stationary distributions of Markov chains 
with small steps
in the quarter plane. 
Ever since it was imported from probability theory to
combinatorics, this group
 has proved useful (in several disguises, like
the \emm obstinate,, \emm algebraic,, or \emm iterated, kernel method)
to solve various 
enumeration problems, dealing with  walks~\cite{bousquet-versailles,Bous05,BoPe03,Rensburg-Prellberg-Rechni,Mishna-jcta,Mishna-Rechni}, but also with other
objects, like permutations~\cite{bousquet-motifs} or set
partitions~\cite{bousquet-xin,xin-zhang} --- 
the common feature of all these problems being that they boil down to
solving a linear  equation with two catalytic variables.
A striking observation, which
applies to  all  solutions obtained so far,  is that the solution is
D-finite if and only if  the group is finite.
We find that exactly 23 out of our 79 quarter plane models give rise to a
finite group.

We then focus on these 23 models.
For each of them, we derive in Section~\ref{sec:tools}
 an identity between various specializations of $Q(x,y;t)$ which we
 call the \emm orbit sum, (or \emm half-orbit sum,,
when there is an $x/y$ symmetry in $G(\cS)$).

In Section~\ref{sec:orbit}, we show how to derive $Q(x,y;t)$ from the
orbit sum in 19 out of the 20 models that have a finite group and no
$x/y$ symmetry. The number of $n$-step walks ending at $(i,j)$ is
obtained by extracting the coefficient of $x^i y^j t^n$ in a 
rational series  which is easily obtained from the step set and the
group. This implies that the \gf\ $Q(x,y;t)$ is 
D-finite. The form of the solution is reminiscent of a formula
obtained by Gessel and
Zeilberger  for the enumeration of walks confined to a
Weyl chamber, when the set of steps is invariant under the associated
Weyl group~\cite{gessel-zeilberger}.  Indeed,  when the
quarter plane problem happens to be a Weyl chamber problem, our method
can be seen as an algebraic version of the reflection principle
(which is the basis of~\cite{gessel-zeilberger}). 
However, its range of applications seems to be more general.
 We work out in details three cases: walks with
\NN, \WW\ and \SE\ steps (equivalent to Young tableaux with at most 3 rows), walks with \NN,
\SS, \EE, \WW, \SE\ and \NW\ steps (which
do not seem to have been solved before, but
behave very much like the former case), and
finally walks with \EE, \WW, \NW\ and \SE\ steps (studied
in~\cite{gouyou-chemins-montreal}), for which we obtain  new
explicit results.

The results of Section~\ref{sec:half-orbit} may be considered more
surprising:
For the 3 models that have a finite group and an $x/y$
symmetry, we derive the series $Q(x,y;t)$ from the
half-orbit sum and find, remarkably,
that  $Q(x,y;t)$ is always algebraic. We
work out in details all cases: walks with \SS, \WW\ and \NE\ steps
(Kreweras' walks), walks with \EE, \NN\ and \SW\ steps (the reverse steps of
Kreweras' steps) and walks with \NN, \SS, \EE, \WW, \NE\ and \SW\ steps, which, to
our knowledge, have never been studied before.  In particular, we find
that the series $Q(1,1;t)$ that counts walks of the latter
 type, regardless of their endpoint, satisfies a simple quartic equation.

\subsection{Preliminaries and notation}
%
Let $A$ be a commutative ring and $x$ an indeterminate. We denote by
$A[x]$ (resp. $A[[x]]$) the ring of polynomials (resp. \fps) in $x$
with coefficients in $A$. If $A$ is a field, then $A(x)$ denotes the field
of rational functions in $x$, and $A((x))$ the field of Laurent series
in $x$. These notations are generalized to polynomials, fractions
and series in several indeterminates. We 
denote $\bx=1/x$, so that $A[x,\bx]$ is the ring of Laurent
polynomials in $x$ with coefficients in $A$.
The coefficient of $x^n$ in a Laurent  series $F(x)$ is denoted
$[x^n]F(x)$. 
 The \emm valuation, of a Laurent series
$F(x)$ is the smallest $d$ such that $x^d$ occurs in $F(x)$ with a
non-zero coefficient.

The main family of series that we use is that 
 of power series in $t$ with coefficients in
$A[x,\bx]$, that is, series of the form 
$$
F(x;t)=\sum_{n\ge 0, i\in \zs} f(i;n) x^i t^n,
$$
where for all $n$, almost all coefficients $f(i;n)$ are zero. The
\emm positive part, of $F(x;t)$ in $x$ is the following series, which
has coefficients in $x\qs[x]$:
$$
[x^>]F(x;t):=\sum_{n\ge 0, i>0} f(i;n) x^i t^n.
$$
We define similarly the negative, non-negative and non-positive parts of
$F(x;t)$ in $x$, which we denote respectively  by $[x^<]F(x;t), [x^\ge]F(x;t)$ and
$[x^\le]F(x;t)$.

In our generating functions, 
the indeterminate $t$  keeps track of the length of
the walks. We  record the coordinates of the
endpoints with the variables $x$ and $y$. In order to simplify the notation, we  often omit the
dependence of our series in $t$, writing for instance $Q(x,y)$ instead
of $Q(x,y;t)$ for the \gf\ of quarter plane walks.

Recall that a power series $F(x_1, \ldots, x_k) \in \KL[[x_1, \ldots,
    x_k]]$, where $\KL$ is a 
field, is \emm algebraic , (over $\KL(x_1, \ldots, x_k)$) if it satisfies a
non-trivial polynomial equation $P(x_1, \ldots, x_k, F(x_1, \ldots,
x_k))=0$. It is \emm transcendental, if it is not algebraic.
It is \emm D-finite, 
(or \emm holonomic,) 
if the vector space over $\KL(x_1, \ldots, x_k)$
spanned by all partial derivatives of $F(x_1, \ldots, x_k)$ has
finite dimension. 
This means that for all $i\le k$, the series $F$ satisfies a
(non-trivial) linear differential equation in $x_i$ with 
coefficients in $\KL[x_1, \ldots, x_k]$.
We refer to~\cite{lipshitz-diag,lipshitz-df} for a study of these
series. All algebraic series are D-finite.
In Section~\ref{sec:orbit} we use the following result.
\begin{Proposition}\label{prop:pos-part}
 If $F(x,y;t)$ is a rational  power series in $t$, with
  coefficients in $\cs(x)[y,\by]$, then
$[y^>]F(x,y;t)$ is  algebraic over $\cs(x,y,t)$. If the latter series has
  coefficients in $\cs[x,\bx,y]$, its positive part in $x$, that is,
  the series $[x^>][y^>]F(x,y;t)$, is a $3$-variable D-finite series (in $x$, $y$
  and $t$).
\end{Proposition}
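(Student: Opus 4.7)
The plan is to prove the first part by a partial-fraction decomposition of $F$ in the variable $y$, and to deduce the second from Lipshitz's theorem on diagonals of D-finite series.

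For the first part, I would view $F$ as a rational function of $y$ over $\cs(x,t)$ and write $F = N/D$ with $N,D\in\cs(x)[y,t]$ (after clearing negative $y$-powers). The hypothesis that $F$ is a power series in $t$ with coefficients in $\cs(x)[y,\by]$ forces $D(x,y,0)=c(x)\,y^{s}$, a unit in $\cs(x)[y,\by]$. Since $y^{s}$ and $c(x)$ are coprime in $\cs(x)[y]$, Hensel's lemma produces a factorization $D = D_{-}(x,y,t)\cdot D_{+}(x,y,t)$ in $\cs(x)[[t]][y]$, with $D_{-}|_{t=0}=y^{s}$ and $D_{+}|_{t=0}=c(x)$. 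The roots of $D_{-}$ in $y$ are the ``small'' roots of $D$ (those of strictly positive $t$-valuation, going to $0$ as $t\to 0$), and those of $D_{+}$ are the ``large'' ones (of strictly negative $t$-valuation, going to $\infty$); no root has $t$-valuation $0$ because $D(x,y,0)=c(x)\,y^{s}$ has no nonzero $y$-root. Both $D_{\pm}$ are algebraic over $\cs(x,t)$, their coefficients being elementary symmetric functions in subsets of the roots of $D$.

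The partial-fraction decomposition $F = N_{-}/D_{-} + N_{+}/D_{+}$ then splits the $y$-expansion cleanly. For a small root $\al$, the expansion
\[
\frac{1}{(y-\al)^{j}} \;=\; \sum_{k\ge 0}\binom{j+k-1}{k}\al^{k}\by^{\,j+k}
\]
contributes only negative powers of $y$; for a large root $\be$, the expansion
\[
\frac{1}{(y-\be)^{j}} \;=\; (-1)^{j}\sum_{k\ge 0}\binom{j+k-1}{k}\frac{y^{k}}{\be^{\,j+k}}
\]
contributes only non-negative powers of $y$. Both are valid power series in $t$ with Laurent-polynomial coefficients in $y$, because $v_{t}(\al^{k})$ and $v_{t}(\be^{-(j+k)})$ grow linearly with $k$, so only finitely many terms contribute at each level of $t$. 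Hence $N_{-}/D_{-}$ captures the strictly negative $y$-part and $N_{+}/D_{+}$ the non-negative part of $F$, whence
\[
[y^{>}]F \;=\; \frac{N_{+}(x,y,t)}{D_{+}(x,y,t)} \;-\; \frac{N_{+}(x,0,t)}{D_{+}(x,0,t)},
\]
which is algebraic over $\cs(x,y,t)$ (here $D_{+}(x,0,t)\ne 0$ because $0$ is not a large root).

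For the second part, assume $F$ has coefficients in $\cs[x,\bx,y]$. The first part gives $[y^{>}]F$ algebraic, hence D-finite, over $\cs(x,y,t)$, with coefficients still in $\cs[x,\bx,y]$. Using the identity
\[
[x^{>}]H(x,y,t) \;=\; [u^{0}]\,\frac{x\,H(u,y,t)}{u-x}
\]
together with Lipshitz's theorem (constant-term extractions in an auxiliary variable, applied to a D-finite series, produce D-finite series in the remaining variables), I would conclude that $[x^{>}][y^{>}]F$ is D-finite in $(x,y,t)$. The main technical obstacle lies entirely in the first part: verifying algebraicity of $D_{\pm}$ over $\cs(x,t)$, confirming that the partial-fraction pieces $N_{\pm}/D_{\pm}$ do expand as Laurent polynomials in $y$ with the stated sign patterns, and handling the Hensel setup cleanly. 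Once these are in hand, the second part is a routine black-box application of Lipshitz's diagonal result.
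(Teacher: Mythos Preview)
Your approach matches the paper's: partial fractions in $y$ for the first statement (this is precisely the adaptation of Gessel's Theorem~6.1 that the paper cites), and Lipshitz's closure results for the second. Two small omissions in the first part: you should reduce to $\gcd(N,D)=1$ in $\cs(x)[y,t]$ before asserting $D(x,y,0)=c(x)y^{s}$, and the partial-fraction decomposition in general carries a polynomial-in-$y$ part as well, which contributes (harmlessly, being rational) to $[y^{>}]F$.

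There is, however, a genuine technical gap in your second part. Your identity $[x^{>}]H=[u^{0}]\dfrac{xH(u,y,t)}{u-x}$ is correct once $\dfrac{x}{u-x}$ is expanded as $\sum_{k\ge1}(x/u)^{k}$, but Lipshitz's theorem is stated for formal power series with non-negative exponents in all variables. Here $H(u,y,t)$ has genuinely negative $u$-exponents (its coefficients lie in $\cs[u,\bar u,y]$, not $\cs[u,y]$), and $1/(u-x)$ is not a power series in $(u,x)$ either; so the expression does not live in a ring to which Lipshitz applies as a black box. The paper handles exactly this point by a different device: it first observes that for some integer $k$ the series $[y^{>}]F(x,y;tx^{k})$ has \emph{polynomial} coefficients in $x$ and $y$ (the substitution $t\mapsto tx^{k}$ shifts the $x$-exponent of $[t^{n}]$ by $kn$, which dominates the at-most-linear growth of its negative $x$-degree), and only then invokes Lipshitz's Remark~(4), p.~377. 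Inserting this substitution before your Lipshitz step would close the gap.
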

\noindent 
The first statement is  a simple adaptation
of~\cite[Thm.~6.1]{gessel-factorization}. The key tool is to expand $F(x,y;t)$
in partial fractions of $y$.   The second statement  relies on  the
fact that the diagonal of a D-finite series is 
D-finite~\cite{lipshitz-diag}. 
One first observes that there is some $k$ such
that $[y^>]F(x,y;tx^k)$ has polynomial coefficients in $x$ and $y$, and
then applies Remark (4), page 377 of~\cite{lipshitz-diag}.   

Below we  also use the fact that a series $F(t)$ with real
coefficients such that $[t^n]F(t)\sim \kappa \mu^n n^{-k}$ with $k
\in \{1,2, 3, \ldots\}$ cannot be algebraic~\cite{flajolet-context-free}.

\section{The number of non-equivalent non-simple models}\label{sec:models}
Since we restrict ourselves to walks with ``small'' steps (sometimes
called steps with small variations), there are only
 a finite number of cases to study, namely $2^8$, the
number of sets $\cS$ formed of small steps. However, some of these
models are trivial (for instance $\cS=\emptyset$, or
$\cS=\{\bx\}$). More generally, it 
sometimes happens that one of the two
constraints imposed by the quarter 
plane holds automatically, at least when the other constraint is satisfied.
Such models are equivalent to problems of walks confined to a
half-space: their  \gf\ is always algebraic and can be derived
automatically using the \emm kernel method,~\cite{bousquet-petkovsek-recurrences,banderier-flajolet}.  
We show  in Section~\ref{sec:alg}
that, out of the $2^8=256$ models, only 138 are truly
2-constraint problems and are thus worth considering in greater
detail.
Then, some of the remaining problems coincide up to 
an $x/y$ symmetry and are thus equivalent.
 As shown in Section~\ref{sec:symm}, one  finally obtains 79 inherently
 different, truly 2-constraint problems. 

\subsection{Easy algebraic cases}\label{sec:alg}
Let us say that  a step $(i,j)$ is $x$-positive if $i>0$. We define
similarly $x$-negative, $y$-positive and $y$-negative steps.
 There are a number  of reasons that may make the enumeration of
quarter plane walks with steps in $\cS$ a simple problem:
\begin{enumerate}
\item If $\cS$ contains no $x$-positive step, we can ignore its
  $x$-negative steps, which will never be used in a quarter
  plane walk: we are thus back to counting walks with vertical steps
  on a (vertical)   half-line. 
 The solution of this problem is always algebraic, and even rational
 if $\cS=\emptyset$ or $\cS=\{y\}$ or  $\cS=\{\by\}$;
\item Symmetrically, if $\cS$ contains no $y$-positive step, the
  problem is simple with an algebraic solution;
\item If $\cS$ contains no $x$-negative step, all walks with steps in
  $\cS$ that start from $(0,0)$ lie in the half-plane $i\ge 0$. Thus
  any walk lying weakly
 above the $x$-axis is automatically a quarter plane walk,
  and the problem boils down to  counting walks confined to 
  the upper half-plane: the corresponding \gf\ is always algebraic;
\item Symmetrically, if $\cS$ contains no $y$-negative step, the
  problem is simple with an algebraic solution.
\end{enumerate}
We can thus restrict our attention to  sets $\cS$ containing 
$x$-positive,  $x$-negative,  $y$-positive  and   $y$-negative steps.
An inclusion-exclusion argument  shows that the number of such sets is
161. More precisely, the polynomial that counts them by cardinality is
\begin{multline*}
P_1(z)=(1+z)^8-4(1+z)^5+2(1+z)^2+4(1+z)^3-4(1+z)+1\\
= 
2\,{z}^{2}+20\,{z}^{3}+50\,{z}^{4}+52\,{z}^{5}+28\,{z}^{6}+8\,{z}^{7}+
{z}^{8}.
\end{multline*}
In the expression of $P_1(z)$, one of the 4 terms $(1+z)^5$ counts sets with no
$x$-positive step, one term $(1+z)^2$ those with no $x$-positive nor
$x$-negative step,  one term $(1+z)^3$ those with no $x$-positive nor
$y$-positive step, and so on.
All the sets $\cS$ we have discarded correspond to problems 
that  either are trivial or can be solved
automatically using the kernel method.

Among the remaining 161 sets $\cS$, some do not contain any step with both
coordinates non-negative: in this case the only quarter plane walk is
the empty walk. These sets are subsets of $\{\bx,\by, x\by,  \bx\by,
 \bx y\}$. But, as we have assumed at this stage that $\cS$
contains $x$-positive and $y$-positive steps, both $x\by$ and $\bx y$
must belong to $\cS$. Hence we exclude $2^3$ of our $161$ step sets,
which leaves us with $153$ sets, the generating polynomial of which is
$$
P_2(z)=P_1(z)-z^2(1+z)^3=
{z}^{2}+17\,{z}^{3}+47\,{z}^{4}+51\,{z}^{5}+28\,{z}^{6}+8\,{z}^{7}+{z}^{8}.
$$

Another, slightly less obvious, source of simplicity of the model is
when one of the quarter plane constraints implies the other. Assume that
all walks with steps in 
$\cS$ that end at a non-negative abscissa automatically end at a
non-negative ordinate (we say, for short, that the  $x$-condition
forces the $y$-condition). This implies in particular that the steps
$\by$ and $x\by$ do not belong to $\cS$. As we have assumed that $\cS$
contains a $y$-negative step, $\bx\by$ must be in $\cS$. But then $x$
cannot belong to $\cS$, otherwise some walks with a non-negative
final abscissa would have a negative final ordinate,  like $x$
followed by $\bx\by$. We
are left with sets $\cS\subset\{\bx, y, xy,\bx y ,\bx\by\}$
containing  $\bx\by$, and also $xy$ (because we need at least one
$x$-positive step). 
Observe that these five steps are those lying above the first diagonal.
Conversely, it is easy to realize that for any such set, the  $x$-condition 
forces the $y$-condition. The generating
polynomial of such \emm super-diagonal, sets is
$z^2(1+z)^3$. 
Symmetrically,  we need not consider \emm sub-diagonal, sets. 
An inclusion-exclusion argument reduces the generating polynomial  of
non-simple cases to 
$$
P_3(z)= P_2(z)-2z^2(1+z)^3+ z^2
=11\,{z}^{3}+41\,{z}^{4}+49\,{z}^{5}+28\,{z}^{6}+8\,{z}^{7}+{z}^{8},
$$
that is to say, to 138 sets $\cS$.

\subsection{Symmetries}\label{sec:symm}

The eight symmetries of the square act on the step sets. However,
only the $x/y$ symmetry (reflection across the first diagonal) leaves
the quarter plane fixed. Thus two step sets obtained from one another
by applying this symmetry lead to equivalent counting problems. As we
 want to count non-equivalent problems,
we need to determine how many among the 138 sets
$\cS$ that are left have the $x/y$ symmetry. We repeat the arguments of
the previous subsection, counting
 only symmetric models. We successively obtain
$$
P_1^{\sym}= (1+z)^2 (1+z^2)^3-2(1+z)(1+z^2)+1,
$$
$$
P_2^{\sym}=P_1^{\sym}-z^2(1+z)(1+z^2),
$$
$$
P_3^{\sym}=P_2^{\sym}-z^2= 3\,{z}^{3}+
5\,{z}^{4}+5\,{z}^{5}+4\,{z}^{6}+2\,{z}^{7}+{z}^{8}.
$$
For instance, the term we subtract from $P_1^{\sym}$ to obtain
$P_2^{\sym}$ counts symmetric subsets of $\{\bx,\by, x\by,  \bx\by,
 \bx y\}$ containing  $x\by$ and $\bx y$.
The generating polynomial of (inherently different) models that are neither
trivial, nor equivalent to a 1-constraint problem is thus
$$
\frac 1 2 \left(P_3+P_3^{\sym}\right)=
7\,{z}^{3}+23\,{z}^{4}+27\,{z}^{5}+16\,{z}^{6}+5\,{z}^{7}+{z}^{8}.
$$
This gives a total of 79 models, shown in Tables~\ref{tab:classesD2}
to~\ref{tab:infinite}.

\section{The group of the walk}\label{sec:group}
%
Let $\cS$ be a set of small steps containing 
 $x$-positive, $x$-negative,  $y$-positive and  $y$-negative steps.
 This includes  the 79 sets we wish to study.
Let $S(x,y)$ denote the generating polynomial
of the steps of $\cS$: 
\beq\label{S-def}
S(x,y)=\sum_{(i,j)\in \cS} x^iy^j.
\eeq
It is a Laurent polynomial in $x$ and $y$. Recall that $\bx$ stands
for $1/x$, and $\by$ for $1/y$.  
Let us write
\beq\label{A-B}
S(x,y)= A_{-1}(x) \by + A_0(x) + A_1(x) y
= B_{-1}(y) \bx + B_0(y) + B_1(y) x.
\eeq
By assumption, $A_1, B_1, A_{-1}$ and $B_{-1}$ are non-zero.
Clearly, $S(x,y)$ is left unchanged by the following rational
transformations:
$$
\Phi: (x,y)\mapsto \left(\bx \frac {B_{-1}(y)}{B_{1}(y)}, y\right)
\quad \hbox{and} \quad 
\Psi: (x,y)\mapsto \left(x, \by \frac {A_{-1}(x)}{A_{1}(x)}\right).
$$
Note that both $\Phi$ and $\Psi$ are involutions, and thus \emm
bi,\,rational transformations. By composition, they
generate a group 
 that we  denote $G(\cS)$, or $G$ if
there is no risk of confusion. This group is isomorphic to a dihedral
group $D_n$ of order $2n$, with $n\in\ns\cup \{\infty\}$. For each $g\in G$, one has
$S(g(x,y))=S(x,y)$. The \emm sign, of $g$ is 1 (resp.~$-1$) if $g$ is
the product of an even (resp. odd) number of generators $\Phi$ and
$\Psi$. 

\medskip\noindent
{\bf Examples}\\
\noindent 
{\bf 1.}
Assume $\cS$ is left unchanged by a
reflection across a vertical line. This is equivalent to saying that
$S(x,y)=S(\bx,y)$, or  that $B_1(y)=B_{-1}(y)$, or that
$A_i(x)=A_i(\bx)$ for $i=-1,0, 1$. Then 
 the orbit of $(x,y)$ under the action of $G$ reads
$$
(x,y)  
 {\overset{\Phi}{\longleftrightarrow}} (\bx,y)
 {\overset{\Psi}{\longleftrightarrow}} (\bx,C(x)\by)
 {\overset{\Phi}{\longleftrightarrow}} (x,C(x)\by)
 {\overset{\Psi}{\longleftrightarrow}} (x,y),
$$
with $C(x)=\frac{A_{-1}(x)}{A_{1}(x)}$, so that $G$ is finite of order
4.

Note that there may exist rational transformations on $(x,y)$ that
leave $S(x,y)$ unchanged but are
\emm not, in $G$. For instance, if
$\cS=\{\NN,\SS,\EE,\WW\}$, the map $(x,y)\mapsto (y,x)$ leaves
$S(x,y)$ unchanged, but the orbit of $(x,y)$ under $G$ is $\{(x,y),
(\bx, y), (\bx, \by), (x, \by)\}$.

\noindent 
{\bf 2.}
Consider the case $\cS=\{\bx,y,x\by \}$. We have $A_{-1}(x)=x$,
$A_1(x)=1$, $B_{-1}(y)=1$,  $B_1(y)=\by$. The   transformations are
$$
\Phi: (x,y)\mapsto (\bx y,y) \quad \hbox{and}   \quad \Psi: (x,y)\mapsto(x,x\by),
$$
and they generate a group of order 6:
\beq\label{orbit-tandem}
(x,y)  
 {\overset{\Phi}{\longleftrightarrow}} (\bx y,y)
 {\overset{\Psi}{\longleftrightarrow}} (\bx y,\bx)
 {\overset{\Phi}{\longleftrightarrow}} (\by,\bx)
 {\overset{\Psi}{\longleftrightarrow}} (\by,x\by)
 {\overset{\Phi}{\longleftrightarrow}} (x,x\by)
 {\overset{\Psi}{\longleftrightarrow}} (x,y).
\eeq

\noindent 
{\bf 3.}
Consider now the case $\cS=\{\bx,\by,xy \}$, which differs from the
previous one by a rotation of 90 degrees. We have $A_{-1}(x)=1$,
$A_1(x)=x$, $B_{-1}(y)=1$,  $B_1(y)=y$. The two   transformations are
$$
\Phi: (x,y)\mapsto (\bx \by,y) \quad \hbox{and}   \quad \Psi: (x,y)\mapsto(x,\bx\by),
$$
and they also generate a group of order 6:
$$
(x,y)  
 {\overset{\Phi}{\longleftrightarrow}} (\bx \by,y)
 {\overset{\Psi}{\longleftrightarrow}} (\bx \by,x)
 {\overset{\Phi}{\longleftrightarrow}} (y,x)
 {\overset{\Psi}{\longleftrightarrow}} (y,\bx\by)
 {\overset{\Phi}{\longleftrightarrow}} (x,\bx\by)
 {\overset{\Psi}{\longleftrightarrow}} (x,y).
$$ 
As shown by the following lemma, this is not a coincidence.

\begin{Lemma}\label{lem:sym}
  Let $\cS$ and $\tilde \cS$ be two sets of steps differing by one of
  the $8$ symmetries of the square. Then the groups $G(\cS)$ and
  $G(\tilde \cS)$ are isomorphic.
\end{Lemma}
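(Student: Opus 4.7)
The plan is to build an explicit isomorphism between $G(\cS)$ and $G(\tilde\cS)$ by conjugating by a birational map that implements the symmetry $\sigma$ on the variables $(x,y)$. I would first observe that the dihedral group $D_4$ of the eight symmetries of the square is generated by two reflections, for example the vertical reflection $\sigma_v\colon (i,j)\mapsto(-i,j)$ and the diagonal reflection $\sigma_d\colon(i,j)\mapsto(j,i)$. Since ``being isomorphic'' is an equivalence relation on groups, it suffices to establish the lemma when $\tilde\cS$ is obtained from $\cS$ by applying $\sigma_v$ or $\sigma_d$.

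Next, I would realize each symmetry as a birational involution on $(x,y)$, namely $\hat\sigma_v(x,y)=(\bx,y)$ and $\hat\sigma_d(x,y)=(y,x)$, chosen so that the effect on a Laurent monomial $x^i y^j$ matches the effect of the underlying symmetry on the exponent $(i,j)$. A direct computation from~\eqref{S-def}--\eqref{A-B} then translates the symmetry into relations on the coefficient Laurent polynomials. For $\sigma_v$ one gets $\tilde A_i(x)=A_i(\bx)$, $\tilde B_1(y)=B_{-1}(y)$ and $\tilde B_{-1}(y)=B_1(y)$; for $\sigma_d$ one gets $\tilde A_i(x)=B_i(x)$ and $\tilde B_i(y)=A_i(y)$.

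Now I would plug these formulas into the definitions of $\tilde\Phi$ and $\tilde\Psi$ and verify by a short calculation that
\[
\hat\sigma_v\,\Phi\,\hat\sigma_v^{-1}=\tilde\Phi,\qquad \hat\sigma_v\,\Psi\,\hat\sigma_v^{-1}=\tilde\Psi,
\]
while, for the diagonal reflection,
\[
\hat\sigma_d\,\Phi\,\hat\sigma_d^{-1}=\tilde\Psi,\qquad \hat\sigma_d\,\Psi\,\hat\sigma_d^{-1}=\tilde\Phi,
\]
the roles of the two generators being exchanged because $\sigma_d$ swaps the two coordinate axes. Since conjugation is an automorphism of the group of birational self-maps of the plane, the map $g\mapsto \hat\sigma g\hat\sigma^{-1}$ sends the subgroup generated by $\{\Phi,\Psi\}$ bijectively onto the subgroup generated by $\{\tilde\Phi,\tilde\Psi\}$, yielding the desired isomorphism $G(\cS)\simeq G(\tilde\cS)$.

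The only delicate point is bookkeeping in the two conjugation computations, and in particular accepting that for axis-swapping symmetries $\Phi$ is conjugated to $\tilde\Psi$ rather than to $\tilde\Phi$; this does not affect the conclusion, since an abstract group isomorphism is free to permute any chosen set of generators.
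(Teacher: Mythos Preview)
Your proof is correct and follows essentially the same approach as the paper: reduce to the two generating reflections of $D_4$ (vertical and diagonal), realize them as the birational involutions $(x,y)\mapsto(\bx,y)$ and $(x,y)\mapsto(y,x)$, and check by direct computation that conjugation by these involutions sends $\{\Phi,\Psi\}$ to $\{\tilde\Phi,\tilde\Psi\}$ (with the roles swapped in the diagonal case). The paper's argument is identical up to notation.
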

\begin{proof}
The group of symmetries of the square is generated by the two
reflections $\Delta$ (across the first diagonal) and $V$ (across a
vertical line). Hence it suffices to prove the lemma when $\tilde S=
\Delta(S)$ and  when $\tilde S=V(S)$.
 We denote by $\Phi $ and $\Psi$ the transformations associated with
 $\cS$, and by $\tilde \Phi$ and $\tilde \Psi$ those associated with
 $\tilde \cS$.

Assume $\tilde S=\Delta(S)$. We have $\tilde A_i(x)=B_i(x)$ and
$\tilde B_i(y)=A_i(y)$. Denote by $\delta$ the involution
that swaps the coordinates of a pair: $\delta(x,y)=(y,x)$. An
elementary calculation gives
$$
\tilde \Phi= \delta \circ \Psi\circ \delta 
\quad \hbox{and} \quad
\tilde \Psi= \delta \circ \Phi\circ \delta 
$$
so that the groups  $G(\cS)$ and  $G(\tilde \cS)$ are conjugate by
$\delta$.

Assume now $\tilde S=V(S)$. We have $\tilde A_i(x)=A_i(\bx)$ and
$\tilde B_i(y)=B_{-i}(y)$. Denote by $v$ the involution
that replaces  the first coordinate of a pair by its reciprocal:
$v(x,y)=(\bx,y)$. An elementary calculation gives
$$
\tilde \Phi= v \circ \Phi\circ v 
\quad \hbox{and} \quad
\tilde \Psi= v \circ \Psi\circ v 
$$
so that the groups  $G(\cS)$ and  $G(\tilde \cS)$ are conjugate by
$v$.
\end{proof}

\begin{Theorem}\label{thm:infinite}
  Out of the $79$ models under consideration, exactly
 $23$ are  associated  with a finite group: 
\begin{enumerate}
\item[--] $16$ have a vertical symmetry and thus  a group of  order $4$,
\item[--] $5$ have a group of order $6$, 
\item[--] $2$ have  a group of order $8$. 
\end{enumerate}
 \end{Theorem}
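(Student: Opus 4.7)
The strategy is to compute the group $G(\cS)$ for each of the $79$ non-equivalent step sets. By Lemma~\ref{lem:sym} the isomorphism type of $G(\cS)$ is invariant under the full dihedral symmetry group of the square, not only under $x/y$ reflection, so one can first partition the $79$ models into orbits under this larger action and pick a representative in each orbit. Since $G$ is generated by two involutions $\Phi,\Psi$, it is dihedral, and it is finite of order $2n$ if and only if $\Theta:=\Phi\circ\Psi$ has finite order $n$ in the group of birational self-maps of $\mathbb{C}^2$ preserving $S(x,y)$. So the analysis reduces to computing the order of a single birational transformation for each equivalence class.

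First, treat the vertically symmetric case. Whenever $B_{-1}(y)=B_{1}(y)$, Example~1 already exhibits the orbit and shows $|G|=4$. The plan is to adapt the inclusion-exclusion counting of Section~\ref{sec:symm}, but restricted to step sets invariant under $(x,y)\mapsto(\bar x,y)$: define generating polynomials $P_1^V,P_2^V,P_3^V$ analogous to $P_1,P_2,P_3$ that enforce vertical symmetry and exclude trivial/one-constraint/super- or sub-diagonal sets, and to average with the $x/y$ symmetry exactly as was done in Section~\ref{sec:symm}. The expected outcome is $P_3^V(1)/(\text{symmetry factor})=16$, accounting for all the order-$4$ instances in the theorem.

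Next, handle the remaining $79-16=63$ models, which have no vertical symmetry. The plan is to iterate $\Theta$ on the generic pair $(x,y)$ class by class. Examples~2 and~3 already exhibit two models with $|G|=6$; their orbits under the $8$ square symmetries, after quotienting back by the diagonal symmetry, will account for exactly $5$ inherently different models, as in the statement. The two order-$8$ instances are identified by the same type of direct orbit computation on a handful of remaining sets (the natural candidates are the two models with full $\{-1,0,1\}^2$-type support that fail to be vertically symmetric but still satisfy the ``zero drift'' condition stated in the introduction).

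The main obstacle is the infinite-group claim for the other $56$ step sets: no finite computation of iterates can \emph{prove} non-closure of an orbit. The plan is to linearize $\Theta$ at a fixed point $(x_0,y_0)$ defined over $\overline{\mathbb Q}$ and to compute the eigenvalues of the differential $d\Theta_{(x_0,y_0)}$; if one eigenvalue is not a root of unity, then $\Theta$ has infinite order and so does $G$. Concretely, for each of the $56$ models I would exhibit a fixed point (for instance, a point on the Zariski closure of the orbit where the defining ideal simplifies) and compute the trace of $d\Theta$ there, verifying that it is an algebraic number outside the cyclotomic set $\{2\cos(2\pi k/n):n\le 4\}$. An alternative, slightly heavier route is to use the Jacobian criterion on the invariant curve $\{S(x,y)=s\}$: finiteness of $G$ forces this generic-$s$ curve to have genus~$0$ with a $\Theta$-equivariant parametrization, and a genus computation on the Newton polygon of $S$ rules this out for the $56$ exceptional models. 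Either way, the infiniteness proof is organized as a uniform reduction to a finite list of explicit algebraic verifications, model by model, whose outcomes are then recorded in Tables~\ref{tab:classesD2}--\ref{tab:infinite}.
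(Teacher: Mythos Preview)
Your overall architecture matches the paper's: verify finiteness by direct iteration of $\Theta$, and prove infiniteness by linearizing $\Theta$ at a fixed point and checking that the Jacobian eigenvalues are not roots of unity. This is exactly the paper's ``fixed point argument''. However, there is a genuine gap.

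The fixed-point linearization does \emph{not} succeed for all $56$ infinite-group models. The paper singles out five step sets (those whose every step $(i,j)$ satisfies $i+j\ge 0$, e.g.\ $\{\bar x y, y, x\bar y\}$ and its relatives) for which the method fails outright: in three of them $\Theta$ has \emph{no} fixed point in $\mathbb{C}^2$, and in the other two the Jacobian at the fixed point has finite order (its sixth power is the identity) even though $\Theta$ itself does not. For these five models the paper uses a completely different device: it tracks the $t$-valuations of the coordinates when $x,y$ are specialized to Laurent series in an auxiliary variable $z$, obtaining piecewise-linear maps $\phi,\psi$ on $\mathbb{Z}^2$ and exhibiting an explicit unbounded orbit (e.g.\ $(\psi\phi)^n(1,2)=(2n+1,2n+2)$). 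Your plan has no substitute for this, and your alternative ``genus of $\{S(x,y)=s\}$'' route is too vague to fill the gap --- in particular you would need to explain why genus~$1$ forces $\Theta$ to have infinite order on the curve, which is not automatic.

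A smaller issue: checking that the trace of $d\Theta$ lies outside $\{2\cos(2\pi k/n):n\le 4\}$ only rules out orders $\le 4$; to conclude infinite order you must exclude \emph{all} roots of unity. The paper does this by eliminating the algebraic coordinates of the fixed point to get a polynomial $\bar\chi(X)\in\mathbb{Q}[X]$ vanishing on the eigenvalues and then checking that none of its irreducible factors is cyclotomic (a finite check once the degree is known).
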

\begin{proof}
Given that $\Phi$ and $\Psi$ are involutions,  the group they generate
is finite of order $2n$ if and only if $\comp:=\Psi\circ \Phi$ has finite
order $n$. It is thus easy to prove that one of the groups $G(\cS)$ has
order $2n$: one computes  the $m^{\hbox{\small th}}$ iterate $\comp^{m}$
for $1\le m \le n$, and 
checks that only the last of these transformations is the identity. 

We have already seen in the examples above that models with a vertical symmetry
have a group of 
order 4. We leave it to the reader  to check that the models of
Tables~\ref{tab:classesD3} 
and~\ref{tab:classesD4} have groups of order 6 and 8, respectively.
These tables 
give the orbit  of $(x,y)$ under the action of $G$, the elements being
listed in the following order: $(x,y)$, $\Phi(x,y)$, $\Psi\circ
\Phi(x,y)$, and so on.  

Proving that one of the groups $G(\cS)$ is infinite is a more difficult
task.
We apply two different strategies, depending on $\cS$. The first one uses valuations and
works for the   five step sets  of Figure~\ref{tab:5classes-infinite}.
 These are the  sets of our collection 
for which all elements $(i,j)$ satisfy $i+j \ge 0$. We are very
grateful to  Jason Bell, who suggested to us a second strategy which
turned out to apply to the remaining cases. 

\begin{figure}[h]
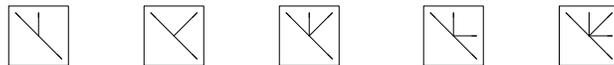
 \center
\hfill\tpic{10010001}\hskip 10mm
\tpic{01010001}\hskip 10mm\tpic{11010001}
\hskip 10mm\tpic{10110001}\hskip 10mm\tpic{11110001}
\hfill\mbox{}
\caption{Five step sets with an infinite group.}
\label{tab:5classes-infinite}
\end{figure}

\noindent {\bf 1. The valuation argument} \\
 Let $z$ be an indeterminate, and let
$x$ and $y$ be Laurent series in $z$ with coefficients in $\qs$,
of respective valuations $a$ and $b$. We assume  that the \emm
trailing, coefficients of these series, namely $[z^a]x$ and $[z^b]y$,
are positive. 
Let us define $x'$ by  $\Phi(x,y)=(x',y)$.
Then  the trailing coefficient of $x'$ (and $y$) is positive,
and the  valuation of $x'$ (and $y$) only depends on $a$ and $b$:
$$
\phi(a,b):=\left(\val(x'), \val(y)\right)= 
\left\{\begin{array}{ll}
  \left( -a+ b( v_{-1}^{(y)}-v_{1}^{(y)}), b\right) &
  \hbox{if } b\ge 0,\\
  \left( -a+ b( d_{-1}^{(y)}-d_{1}^{(y)}), b\right) &
  \hbox{if } b\le 0,
\end{array}\right.
$$
where $v_{i}^{(y)}$ (resp. $d_{i}^{(y)}$) denotes the valuation
(resp. degree) in $y$ of $B_i(y)$, for $i=\pm 1$. 
Similarly, $\Psi(x,y):=(x,y')$ is
well-defined, and the  valuations of $x$ and $y'$ only depend on $a$
and $b$:
$$
\psi(a,b):=\left(\val(x), \val(y')\right)= 
\left\{\begin{array}{ll}
  \left(a, -b+ a( v_{-1}^{(x)}-v_{1}^{(x)})\right) &
  \hbox{if } a\ge 0,\\
  \left(a, -b+ a( d_{-1}^{(x)}-d_{1}^{(x)})\right) &
  \hbox{if } a\le 0,
\end{array}\right.
$$
where $v_{i}^{(x)}$ (resp. $d_{i}^{(x)}$) denotes the valuation
(resp. degree) in $x$ of $A_i(x)$, for $i=\pm 1$.

In order to prove that $G$ is infinite, it suffices to prove that the
group $G'$ generated  by $\phi$ and $\psi$ is infinite. To prove the latter
statement, it suffices to exhibit  $(a,b)\in \zs^2$ such that the
orbit of $(a,b)$ under the action of $G'$ is infinite.

Let $\cS$ be one of the five sets of
Figure~\ref{tab:5classes-infinite}. Then $A_{-1}(x)=x$ 
and $B_{-1}(y)=y$, so that $v_{-1}^{(x)}= d_{-1}^{(x)}=v_{-1}^{(y)}=
d_{-1}^{(y)}=1$. Also, $v_{1}^{(x)}=v_{1}^{(y)}=-1$ as $\cS$ contains
the steps $\bx y $ and $x\by$.
Hence the transformations $\phi$ and $\psi$ read:
$$
\phi(a,b)=\left\{
\begin{array}{ll}
  \left( -a+2 b, b\right) &
  \hbox{if } b\ge 0,\\
  \left( -a +b(1-d_1^{(y)}), b\right) &
  \hbox{if } b\le 0,
\end{array}\right.
\quad \quad 
\psi(a,b)=\left\{
\begin{array}{ll}
  \left(a, 2a-b\right) &
  \hbox{if } a\ge 0,\\
  \left(a, -b+a(1-d_1^{(x)}) \right) &
  \hbox{if } a\le 0.
\end{array}\right.
$$
It is easy to check, by induction on $n\ge 0$, that
$$
(\psi\circ\phi)^n(1,2)= (2n+1, 2n+2) \quad \hbox{and}\quad 
\phi(\psi\circ\phi)^n(1,2)= (2n+3, 2n+2).
$$
(All these pairs have positive entries, so that we never need to know
$d_1^{(y)}$ or $d_1^{(x)}$.)
Hence the orbit of $(1,2)$ under the action of $\phi$ and $\psi$ is
infinite, and so are the groups $G'$ and $G$.

We believe, from our computer experiments, that the groups $G'$
associated with the remaining 51 models of
Table~\ref{tab:infinite} are finite, 
and hence, cannot be used to prove that $G$ is infinite. Instead, we
use for these models a different argument based on the fixed points of
$\comp=\Psi\circ\Phi$.

\medskip
\noindent {\bf 2. The fixed point argument}\\
We are  left with 51 models.  Thanks to Lemma~\ref{lem:sym}, we only 
need to prove that (roughly) a  
quarter of them are associated with a finite group: if  $G(\cS)$
is infinite, then $G(\tilde \cS)$ is infinite for all sets $\tilde \cS$ that
differ from $\cS$ by a symmetry of the square\footnote{Why a quarter,
rather than an eighth? Recall that, if two (distinct) models differ by
an $x/y$ symmetry, only one of them appears in Table~\ref{tab:infinite}.}.
This leaves 14 models to study, listed in Table~\ref{tab:ffpt}. 

Assume $\comp=\Psi\circ\Phi$ is well-defined in the neighborhood of $(a,b) \in
\cs^2$, and that this point is fixed by $\comp$. Note that  $a$
and $b$ are algebraic over $\qs$. Let us write
$\comp=(\comp_1, \comp_2)$, where $\comp_1$ and  $\comp_2$ are the two
coordinates of $\comp$. Each $\comp_i$ sends the pair $(x,y)$ to a rational
function of $x$ and $y$. The local expansion
of $\comp$ around $(a,b)$  reads
$$
\comp(a+u,b+v)= (a,b) + (u,v)J_{a,b} + O(u^2) + O(v^2) +O(uv),
$$
where $J_{a,b}$ is the Jacobian matrix of $\comp$ at $(a,b)$:
$$
J_{a,b} = \left(
\begin{array}{ll}
  \displaystyle \frac{\partial \comp_1}{\partial x}(a,b) & \displaystyle \frac{\partial
    \comp_2}{\partial x}(a,b) \\
\\
\displaystyle   \frac{\partial \comp_1}{\partial y}(a,b) & \displaystyle \frac{\partial
    \comp_2}{\partial y}(a,b)
\end{array}
\right).
$$
Iterating the above expansion gives, for $m\ge 1$,
$$
\comp^{m}(a+u,b+v)= (a,b) + (u,v)J_{a,b}^m + O(u^2) + O(v^2) +O(uv).
$$
Assume $G(\cS)$ is finite of order $2n$, so that $\comp$ has 
order $n$. Then $\comp^{n}(a+u,b+v)= (a,b)+(u,v)$, and the above
equation shows that $J_{a,b}^n$ is the identity matrix. In particular, all
 eigenvalues of $J_{a,b}$ are roots of unity.

This gives us a strategy
 for proving that a group $G(\cS)$ is infinite:
find a fixed point $(a,b)$ for $\comp$, and compute the characteristic
polynomial $\chi(X)$ of the Jacobian matrix $J_{a,b}$. This is a
polynomial in $X$ with coefficients in $\qs(a,b)$.  
In order to decide whether the roots of $\chi$ are roots of unity, we
eliminate $a$ and 
$b$ (which are algebraic numbers) from the equation $\chi(X)=0$ to
obtain a polynomial  $\cchi(X) \in\qs[X]$ that vanishes for all eigenvalues of $J_{a,b}$: if none of its
factors is cyclotomic, we can conclude that $G(\cS)$ is infinite. As
all cyclotomic polynomials of given degree are known, this procedure
is effective.

Let us treat one case in detail, say $\cS=\{x,y,\by, \bx\by\}$ 
(the first case in Table~\ref{tab:ffpt}).
 We have
$$
\comp(x,y) =\Psi\circ\Phi(x,y)=\left( \bx \by, x+\by\right).
$$
Every pair $(a,b)$ such that $a^4+a^3=1$ and $b=1/a^2$ is 
fixed by $\comp$. Let us choose one such pair.  The Jacobian matrix reads
$$
J_{a,b}= \left(
\begin{array}{cc}
  -1 & 1
\\
-a^3 & -a^4
\end{array}
\right),
$$
and its characteristic polynomial  is 
$$
\chi(X):=\det( X \Id - J_{a,b}) = {X}^{2}+X(1+{a}^{4})+{a}^{3}+{a}^{4}.
$$
Let $X$ be a root of this polynomial. By eliminating $a$ (which
satisfies $a^4+a^3=1$), we obtain
$$
\cchi(X):={X}^{8}+9\,{X}^{7}+31\,{X}^{6}+62\,{X}^{5}+77\,{X}^{4}+62\,{X}^{3}+31
\,{X}^{2}+9\,X+1
=0.
$$
This polynomial is irreducible, and distinct from all cyclotomic
polynomials of degree 8. Hence none of its roots  are  roots of unity, no
power of $J_{a,b}$ is equal to the identity matrix, and the group
$G(\cS)$ is infinite.

This strategy turns out to work for  all  models of
Table~\ref{tab:ffpt}. This table gives, for each model, the
algebraic equations defining the fixed point
$(a,b)$ that we choose 
(for instance, the ``condition'' $a^4+a^3-1$
occurring on the first line means that $a^4+a^3-1=0$), 
and a polynomial  $\cchi(X) \in\qs[X]$ that vanishes at all eigenvalues of the
Jacobian matrix, in factored form. One then checks that no factor of
this polynomial is cyclotomic.
 
It may be worth noting that this second strategy
 does \emm not, work for the five models of
 Figure~\ref{tab:5classes-infinite}: in the first three cases,
 $\comp$ has no fixed  point; 
in the last two cases, it has a fixed
 point $(a,b)$, but the sixth power of the Jacobian
 matrix $J_{a,b}$ is the identity
(of course, $\Theta^6$ is \emm not, the identity; more precisely, the
 expansion of $\Theta^6(a+u,b+v)$ involves cubic terms in $u$ and $v$).
 \end{proof}

\noindent {{\bf Remark.}} To put this discussion in a larger
framework, let us mention that 
the group of birational transformations or $\cs^2$
(or of the projective plane $\PL^2(\cs)$), called the \emm Cremona
group,, 
has been the object of many studies in algebraic geometry since the end of the
19th century~\cite{kantor,wiman}. It seems possible
 that, given the
attention already paid to the classification of finite subgroups of
this group (see e.g.~\cite{blanc,bayle-beauville}), there is a
generic or automatic test for finiteness that can be applied to all our examples. 
%

\section{General tools}\label{sec:tools}

Let $\cS$ be one of the 79 step sets of
Tables~\ref{tab:classesD2} to \ref{tab:infinite}. Let $\cQ$ be
the set of walks that start from $(0,0)$, take their steps  from $\cS$
and always remain in the first quadrant. Let $q(i,j;n)$ be the number
of such walks that have length $n$ and end at position $(i,j)$. Denote
by $Q(x,y;t) \equiv Q(x,y)$ the associated \gf:
$$
Q(x,y;t)=\sum_{i,j,n\ge 0} q(i,j;n) x^i y^j t^n.
$$
It is a \fps\ in $t$ with coefficients in $\qs[x,y]$. 

\subsection{A functional equation} 

\begin{Lemma}\label{lem:eq-func}
As a power series in $t$, the 
 \gf\ $Q(x,y)\equiv Q(x,y;t)$ of walks with steps taken from~$\cS$ starting from $(0,0)$ and staying in the first quadrant is
characterized by the following functional equation: 
  $$
K(x,y)xyQ(x,y)= xy- txA_{-1}(x) Q(x,0) -tyB_{-1}(y) Q(0,y)+ t\epsilon Q(0,0)
$$
where
$$
K(x,y)= 1-tS(x,y)=1-t \sum_{(i,j)\in \cS } x^iy^j
$$
is called the \emm kernel, of the equation,  the polynomials
$A_{-1}(x)$ and $B_{-1}(y)$ are the coefficients of $\by$ and $\bx$ in
$S(x,y)$, as
described by~\eqref{A-B}, and  $\epsilon$ is $1$ if $(-1,-1)$
is one of the allowed steps, and $0$ otherwise.
\end{Lemma}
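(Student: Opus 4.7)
The plan is to derive the equation by a step-by-step decomposition of quarter-plane walks, then check that the corresponding recursion in powers of $t$ uniquely determines $Q(x,y)$.

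\textbf{Step 1: the raw recursion.} A walk in $\cQ$ is either the empty walk, or obtained by appending a step $(i,j)\in\cS$ to a shorter walk in $\cQ$. Appending a step $(i,j)$ to a walk ending at $(k,\ell)$ produces a walk ending at $(k+i,\ell+j)$, which is encoded by $x^iy^j$ times the previous endpoint monomial. Summing over all $(i,j)\in\cS$ yields, at the level of \gf s, the naive identity
\[
Q(x,y) \;=\; 1 \;+\; tS(x,y)Q(x,y) \;-\; R(x,y),
\]
where $R(x,y)$ collects the overcounted ``bad'' extensions, namely those for which the new step carries the walker out of the first quadrant.

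\textbf{Step 2: identifying the bad terms.} A bad extension occurs exactly when the appended step has $i=-1$ and the walk ended on the $y$-axis, or when it has $j=-1$ and the walk ended on the $x$-axis. Using~\eqref{A-B}, the steps with $i=-1$ contribute $\bx B_{-1}(y)$ to $S(x,y)$, while the steps with $j=-1$ contribute $\by A_{-1}(x)$. Hence the two families of bad extensions have \gf s $t\bx B_{-1}(y)Q(0,y)$ and $t\by A_{-1}(x)Q(x,0)$. Walks ending at $(0,0)$ and extended by the step $(-1,-1)$ are counted in both families, so by inclusion--exclusion we must add back $t\bx\by\,\epsilon\,Q(0,0)$, where $\epsilon=\mathds{1}_{(-1,-1)\in\cS}$. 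Substituting and clearing denominators by multiplying through by $xy$ gives
\[
xyQ(x,y) \;=\; xy + txyS(x,y)Q(x,y) - tyB_{-1}(y)Q(0,y) - txA_{-1}(x)Q(x,0) + t\epsilon Q(0,0),
\]
which, after grouping the $Q(x,y)$ terms using $K(x,y)=1-tS(x,y)$, is the claimed functional equation.

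\textbf{Step 3: uniqueness as a power series in $t$.} Writing $Q(x,y;t)=\sum_{n\ge 0}Q_n(x,y)t^n$ with $Q_n\in\qs[x,y]$, the equation, read modulo $t^{n+1}$, expresses $Q_n$ polynomially in the $Q_m(x,y)$, $Q_m(x,0)$, $Q_m(0,y)$, $Q_m(0,0)$ for $m<n$, together with the initial condition $Q_0(x,y)=1$. This recursion determines all $Q_n$ uniquely, so the equation characterizes $Q(x,y;t)$ among \fps\ in $t$ with polynomial coefficients in $x,y$.

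The only slightly delicate point is the inclusion--exclusion bookkeeping when $(-1,-1)\in\cS$: one must be careful that the walk from $(0,0)$ via a \SW\ step is subtracted twice in the two boundary corrections and restored by the $+t\epsilon Q(0,0)$ term. All other ingredients are routine bookkeeping of a standard step-by-step construction.
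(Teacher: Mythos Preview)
Your proof is correct and follows essentially the same approach as the paper: a step-by-step construction with inclusion--exclusion for the boundary corrections, followed by the observation that the equation determines the coefficients of $t^n$ recursively. The only cosmetic difference is that you package the overcount into a term $R(x,y)$ before identifying its three pieces, whereas the paper writes out the subtractions directly.
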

\begin{proof}
  We construct walks step by step, starting from the empty walk and
  concatenating a new step at the end of the walk at each
  stage. The empty walk has weight 1. The \gf\ of walks obtained by
  adding a step of $\cS$ at the end of a walk of $\cQ$ is
  $tS(x,y)Q(x,y)$. However, some of these walks exit the quadrant:
  those obtained by concatenating a $y$-negative step
to a walk ending at ordinate $0$, and  those obtained by concatenating
an $x$-negative step
to a walk ending at abscissa $0$. Walks ending at
  ordinate (resp. abscissa) $0$ are 
  counted by the series $Q(x,0)$ (resp. $Q(0,y)$). Hence we must
  subtract the series $t\by A_{-1}(x) Q(x,0)$ and $t\bx B_{-1}(y)
  Q(0,y)$. However, if $(-1,-1) \in \cS$, we have subtracted  twice the series counting
  walks obtained by concatenating this step to a walk ending at
  $(0,0)$: we must thus add the series $\epsilon t\bx\by Q(0,0)$. This
  inclusion-exclusion argument gives
$$
Q(x,y)=1+ tS(x,y)Q(x,y)-t\by A_{-1}(x) Q(x,0)-t\bx
B_{-1}(y)Q(0,y)+\epsilon t\bx\by Q(0,0),
$$
which, multiplied by $xy$, gives the equation of the lemma.

The fact that it characterizes $Q(x,y;t)$ completely (as a power
series in $t$) comes from the fact that the coefficient of $t^n$ in
$Q(x,y;t)$ can be computed inductively using this equation. This is of
course closely related to the fact that we have used a  recursive
description of walks in $\cQ$ to obtain the equation.
\end{proof}

\subsection{Orbit sums}
We have seen in Section~\ref{sec:group} that all transformations $g$
of the group $G$ associated with the step set $\cS$ leave the  polynomial
$S(x,y)$ unchanged. Hence they also leave the kernel
$K(x,y)=1-tS(x,y)$ unchanged. Write  the equation of Lemma~\ref{lem:eq-func} as
$$
K(x,y)xyQ(x,y)=xy-F(x)-G(y)+t\epsilon Q(0,0),
$$
with $F(x)=tx A_{-1}(x) Q(x,0)$ and $G(y)=tyB_{-1}(y)Q(0,y)$. 
Replacing $(x,y)$ by $\Phi(x,y)=(x',y)$ gives
$$
K(x,y)x'yQ(x',y)=x'y-F(x')-G(y)+t\epsilon Q(0,0).
$$
The difference between the former and latter  identities reads:
$$
K(x,y)\big( xyQ(x,y)-x'yQ(x',y)\big)= xy-x'y-F(x)+F(x').
$$
The term $G(y)$ has disappeared. We can repeat this process, and
add to this identity the functional equation of Lemma~\ref{lem:eq-func}, evaluated at
$(x',y')=\Psi(x',y)$. This gives:
$$
K(x,y)\big( xyQ(x,y)-x'yQ(x',y)+x'y'Q(x',y')\big)= xy-x'y+x'y'-F(x)-G(y')+t\epsilon Q(0,0).
$$
Now the term $F(x')$ has disappeared. If $G$ is finite of order $2n$,
we can repeat the procedure until we come back to
$(\Psi\circ\Phi)^n(x,y)=(x,y)$.  That is to say, we 
form the alternating sum of the equations over the orbit of $(x,y)$.
All unknown functions on the right-hand side finally vanish, giving
the following proposition, where we use the notation
$$
\hbox{for } g \in G, \quad g(A(x,y)):=A(g(x,y)).
$$

\begin{Proposition}[{\bf Orbit sums}]\label{prop:orbit-sum}
Assume the group $G(\cS)$ is finite. Then
  \beq\label{eq:orbit}
\sum_{g\in G} \sign (g) g(xyQ(x,y;t))= \frac 1 {K(x,y;t)} \sum_{g\in G}
\sign (g) g(xy).
\eeq
\end{Proposition}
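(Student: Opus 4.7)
The proposition follows by summing, with signs, the images of the functional equation of Lemma~\ref{lem:eq-func} under the action of every element of $G$. I would first rewrite that equation as
\[
K(x,y)\,xyQ(x,y)=xy-F(x)-G(y)+t\epsilon\, Q(0,0),
\]
with $F(x)=txA_{-1}(x)Q(x,0)$ and $G(y)=tyB_{-1}(y)Q(0,y)$, and then apply an arbitrary $g\in G$ to both sides. Since $K(x,y)=1-tS(x,y)$ is $G$-invariant (Section~\ref{sec:group}), the image reads
\[
K(x,y)\,g(xyQ(x,y))=g(xy)-g(F(x))-g(G(y))+t\epsilon\, Q(0,0).
\]
Multiplying by $\sign(g)$ and summing over $g\in G$, the left-hand side gives $K(x,y)\sum_g\sign(g)\,g(xyQ(x,y))$, and the term $t\epsilon Q(0,0)$ drops out because $G$ has as many even as odd elements.

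The heart of the proof is to show that the two remaining sums involving the unknown boundary series vanish:
\[
\sum_{g\in G}\sign(g)\,g(F(x))=0\qquad\text{and}\qquad\sum_{g\in G}\sign(g)\,g(G(y))=0.
\]
Both are instances of the following observation. The generator $\Psi$ fixes the first coordinate, so for every $g\in G$ the composition $\Psi\circ g$ has the same first coordinate as $g$; hence $(\Psi\circ g)(F(x))=g(F(x))$. Since $\sign(\Psi\circ g)=-\sign(g)$, the involution $g\mapsto \Psi\circ g$ partitions $G$ into pairs on which the signed summands $\sign(g)\,g(F(x))$ cancel. The same argument applied to $\Phi$, which fixes the second coordinate, kills the $G(y)$ sum. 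This pairing is the only nontrivial point, and it is the step I expect to be most easily overlooked rather than technically hard.

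Combining these observations yields the identity
\[
K(x,y)\sum_{g\in G}\sign(g)\,g(xyQ(x,y))=\sum_{g\in G}\sign(g)\,g(xy),
\]
which is the desired formula after dividing by $K(x,y)$. Concerning the algebraic setting: the orbit elements $g(x,y)$ are rational functions of $x$ and $y$, so each $g(xyQ(x,y;t))$ is a formal power series in $t$ whose coefficients are rational in $x,y$; the identity thus holds in the ring $\qs(x,y)[[t]]$, and division by $K(x,y)=1-tS(x,y)$ is legitimate in that ring because $K$ has constant term $1$ in $t$. No further subtlety arises from the finiteness hypothesis on $G$ beyond the fact that all sums are finite.
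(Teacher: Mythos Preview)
Your proof is correct and essentially the same as the paper's: both form the signed sum of the functional equation over the orbit and use the $G$-invariance of $K(x,y)$. The paper presents the cancellation of the boundary terms $F(x)$ and $G(y)$ as an iterated telescoping (apply $\Phi$, subtract; apply $\Psi$, add; \ldots), while you phrase the same cancellation via the sign-reversing bijections $g\mapsto\Psi g$ and $g\mapsto\Phi g$ on $G$---a slightly cleaner packaging of the identical mechanism.
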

Observe that the right-hand side is a
rational function in $x$, $y$ and $t$.  We  show in Section~\ref{sec:orbit} that this identity implies
immediately that 19 of the 23 models having a finite group have a
D-finite solution.

The 4 remaining models are Gessel's model $\{x,\bx,xy,\bx\by\}$ (which
we do not solve  in this paper) and the three 
models with steps
$\cS_1=\{\bx,\by,xy\}$, $\cS_2=\{x,y,\bx\by\}$, and
$\cS=\cS_1\cup\cS_2$. For each of these three models, the orbit of $(x,y)$ is 
$$
(x,y)  
 {\overset{\Phi}{\longleftrightarrow}} (\bx \by,y)
 {\overset{\Psi}{\longleftrightarrow}} (\bx \by,x)
 {\overset{\Phi}{\longleftrightarrow}} (y,x)
 {\overset{\Psi}{\longleftrightarrow}} (y,\bx\by)
 {\overset{\Phi}{\longleftrightarrow}} (x,\bx\by)
 {\overset{\Psi}{\longleftrightarrow}} (x,y),
$$
and exhibits  an $x/y$ symmetry.
That is, $(y,x)$ belongs to the orbit of $(x,y)$. Moreover, if
$g((x,y))=(y,x)$, then $\sign (g)=-1$. Thus the right-hand side
of~\eqref{eq:orbit} vanishes, leaving
$$
xyQ(x,y)-\bx Q(\bx\by,y)+\by Q(\bx\by,x)=
 xyQ(y,x)-\bx Q(y,\bx\by)+\by Q(x,\bx\by).
$$
But this identity  directly  follows from the obvious relation $Q(x,y)=Q(y,x)$ and
does not bring much 
information. In Section~\ref{sec:half-orbit}, we  solve these three 
obstinate models by summing
the functional equation over one half of the orbit only. 
Given that $A_{-1}(x)=B_{-1}(x)$, the identity resulting from this
half-orbit summation reads as follows.
\begin{Proposition}[{\bf Half-orbit sums}]\label{prop:half-orbit}
Denote $\cS_1=\{\bx,\by,xy\}$ and  $\cS_2=\{x,y,\bx\by\}$. If the set of
steps is $\cS_1$, $\cS_2$ or $\cS_1\cup \cS_2$,
then
  $$
xyQ(x,y)-\bx Q(\bx\by,y)+\by Q(\bx\by,x)= \frac 
{xy-\bx+\by- 2txA_{-1}(x)Q(x,0)+t\epsilon Q(0,0)} {K(x,y)}.
$$
\end{Proposition}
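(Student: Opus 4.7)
The plan is to mimic the derivation of the full orbit sum (Proposition~\ref{prop:orbit-sum}), but to truncate the process after three applications of the kernel equation rather than going around the whole orbit of six elements. The key preliminary observation is that each of the three step sets $\cS_1$, $\cS_2$ and $\cS_1\cup \cS_2$ is invariant under the $x/y$ reflection, so $A_{-1}(x)=B_{-1}(x)$ as Laurent polynomials and $Q(x,y)=Q(y,x)$ as power series. Setting $F(x):=tx\,A_{-1}(x)\,Q(x,0)$, the second boundary term $ty\,B_{-1}(y)\,Q(0,y)$ in Lemma~\ref{lem:eq-func} then equals $F(y)$, and the functional equation takes the symmetric shape
\begin{equation*}
K(x,y)\, xy\, Q(x,y) \;=\; xy - F(x) - F(y) + t\epsilon Q(0,0).
\end{equation*}

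Next, I would evaluate this equation at the three points $(x,y)$, $\Phi(x,y)=(\bx\by,y)$ and $(\Psi\circ\Phi)(x,y)=(\bx\by,x)$; these are one representative from each pair in the orbit of $(x,y)$ under the swap $(x,y)\mapsto(y,x)$, which is exactly what the discussion preceding the proposition identifies as the obstruction to the full orbit sum. Since every element of $G(\cS)$ leaves $K(x,y)$ invariant, the kernel factor is common to all three identities, whose right-hand sides are
$xy-F(x)-F(y)+t\epsilon Q(0,0)$, $\bx-F(\bx\by)-F(y)+t\epsilon Q(0,0)$ and $\by-F(\bx\by)-F(x)+t\epsilon Q(0,0)$ (after simplifying $\bx\by\cdot y=\bx$ and $\bx\by\cdot x=\by$).

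Taking the alternating combination (first minus second plus third) then eliminates the two-variable unknowns exactly as in the full-orbit argument: the two copies of $F(y)$ cancel between the first and second identities, the two copies of $F(\bx\by)$ cancel between the second and third, and $F(x)$ survives with coefficient $-2$. The pure monomials add to $xy-\bx+\by$, and the three copies of $t\epsilon Q(0,0)$ combine with signs $+,-,+$ to give $+t\epsilon Q(0,0)$. Dividing by $K(x,y)$ yields the claimed identity.

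The argument is essentially bookkeeping once the symmetric form of the functional equation is in place; the only point requiring genuine care is the cancellation pattern in the alternating sum, and in particular the fact that it is $F(x)$ (not $F(\bx\by)$ or $F(y)$) that survives with a non-trivial coefficient. This is what makes the half-orbit identity a useful equation for $Q(x,y)$ through its boundary trace $Q(x,0)$, rather than the tautology $Q(x,y)=Q(y,x)$ to which the full orbit sum collapses.
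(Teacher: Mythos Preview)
Your proof is correct and follows exactly the approach indicated in the paper: the paper merely says that, given $A_{-1}(x)=B_{-1}(x)$, one sums the functional equation over one half of the orbit, and you have carried out that computation in full detail. The cancellation pattern you describe (with $F(y)$ and $F(\bx\by)$ disappearing and $F(x)$ surviving with coefficient $-2$) is precisely what the paper has in mind.
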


\noindent{\bf Remark.} For Gessel's walks, the orbit of $(x,y)$ is
shown in Table~\ref{tab:classesD4}. Proposition~\ref{prop:orbit-sum} reads
$$\sum_{g\in G} \sign (g) g(xyQ(x,y))= 0,$$
although no obvious symmetry explains this identity.

\subsection{The roots of the kernel}
Recall that the kernel of the main functional equation (Lemma~\ref{lem:eq-func})
is
$$
K(x,y)=1-t\sum_{(i,j)\in \cS} x^iy ^j.
$$

\begin{Lemma}\label{lem:roots}
  Let 
$$
\Delta(x)= \left(1-tA_0(x)\right)^2-4t^2A_{-1}(x) A_1(x).
$$
Let $Y_0(x)$ and $Y_1(x)$ denote the two roots of the kernel $K(x,y)$,
where  $K(x,y)$ is 
seen as a polynomial in $y$. These roots are
Laurent series in $t$ with coefficients in $\qs(x)$:
 \beq\label{Y-sol}
Y_{0}(x) = \frac{1-tA_0(x)- \sqrt{\Delta(x)}}{2tA_1(x)},
\quad \quad 
Y_{1}(x) = \frac{1-tA_0(x)+ \sqrt{\Delta(x)}}{2tA_1(x)}.
\eeq
Their valuations in $t$ are respectively $1$ and $-1$.
Moreover, $1/K(x,y)$ is a power series in $t$ with coefficients in
$\qs[x,\bx,y,\by]$, and the coefficient of $y^j$ in this series can be
easily extracted using
\beq\label{K-inv}
\frac 1{K(x,y)}= \frac 1{\sqrt{\Delta(x)}} \left( \frac 1{1-\by Y_0(x)}+\frac
1{1-y/ Y_1(x)}-1\right).
\eeq
\end{Lemma}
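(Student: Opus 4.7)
The plan is to treat the kernel as a quadratic polynomial in $y$ and solve it explicitly. Writing $yK(x,y) = -tA_1(x)y^2 + (1-tA_0(x))y - tA_{-1}(x)$, the quadratic formula immediately produces the two roots displayed in~\eqref{Y-sol}, with discriminant $\Delta(x)$. To make sense of the square root, I fix the branch of $\sqrt{\Delta(x)}$ by requiring that its constant term in $t$ equals $1$; this is well-defined because $\Delta(x) \in \qs(x)[t]$ has constant term $1$, so there is a unique square root in $\qs(x)[[t]]$ with this property. In particular both $Y_0$ and $Y_1$ are Laurent series in $t$ with coefficients in $\qs(x)$, as claimed.

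For the valuations, I expand $\sqrt{\Delta(x)} = 1 - tA_0(x) - 2t^2 A_{-1}(x)A_1(x) + O(t^3)$ using $\sqrt{1+u} = 1 + u/2 + O(u^2)$. Substituting into~\eqref{Y-sol} makes the numerator of $Y_0(x)$ equal to $2t^2A_{-1}(x)A_1(x) + O(t^3)$, so $Y_0(x) = tA_{-1}(x) + O(t^2)$ has $t$-valuation $1$ (since $A_{-1}(x) \neq 0$ by assumption). For $Y_1(x)$ the numerator is $2 + O(t)$, giving $Y_1(x) = 1/(tA_1(x)) + O(1)$, of $t$-valuation $-1$.

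For the partial fraction expansion~\eqref{K-inv}, I combine the Vieta identities $Y_0+Y_1 = (1-tA_0(x))/(tA_1(x))$ and $Y_0 Y_1 = A_{-1}(x)/A_1(x)$ with $Y_1 - Y_0 = \sqrt{\Delta(x)}/(tA_1(x))$. These yield the factorization $K(x,y) = -tA_1(x)\by(y-Y_0)(y-Y_1)$ and then, by an elementary partial-fraction manipulation,
$$\frac{1}{K(x,y)} = \frac{1}{\sqrt{\Delta(x)}}\left(\frac{Y_0}{y-Y_0} - \frac{Y_1}{y-Y_1}\right).$$
Since $Y_0$ has positive $t$-valuation, I expand the first term by factoring $y$ out of the denominator to obtain $Y_0/(y-Y_0) = 1/(1-\by Y_0) - 1$, which is a legitimate power series in $t$ because $\by Y_0$ has $t$-valuation $1$. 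Since $1/Y_1$ has positive $t$-valuation, I expand the second term by factoring $-Y_1$ out of the denominator to obtain $Y_1/(y-Y_1) = -1/(1-y/Y_1)$, again a legitimate $t$-series. Adding these yields~\eqref{K-inv}. The fact that $1/K(x,y)$ is itself a power series in $t$ with coefficients in $\qs[x,\bx,y,\by]$ follows already from the geometric expansion $1/K(x,y) = \sum_{n\ge 0}(tS(x,y))^n$; the point of~\eqref{K-inv} is to allow the coefficient of any fixed $y^j$ to be extracted by expanding a single geometric series.

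The only real subtlety is bookkeeping: choosing signs and branches consistently so that the expansion read off from~\eqref{K-inv} actually coincides with the geometric-series expansion of $1/K(x,y)$. The valuation computation of $Y_0$ and $Y_1$ is precisely what determines the correct direction of expansion for each of the two partial fractions, and thereby pins down the formula.
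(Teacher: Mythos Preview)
Your proof is correct and follows essentially the same approach as the paper: solve the quadratic in $y$, determine the $t$-valuations of the two roots, and derive~\eqref{K-inv} by partial fractions in $y$. The only notable difference is cosmetic: the paper obtains the valuation of $Y_0$ indirectly from the Vieta relation $Y_0Y_1=A_{-1}(x)/A_1(x)$ after first reading off the valuation of $Y_1$, whereas you compute it directly by expanding $\sqrt{\Delta(x)}$; your explicit partial-fraction computation is also somewhat more detailed than the paper's one-line reference to a partial-fraction expansion.
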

\begin{proof}
  The equation $K(x,Y)=0$ also reads
\beq\label{Y-alg}
Y=t\left(A_{-1}(x)+YA_0(x)+Y^2A_1(x)\right).
\eeq
Solving this quadratic provides the expressions of  $Y_0(x)$ and
$Y_1(x)$ given above. As $\Delta(x)=1+O(t)$, the series  $Y_1$ has
valuation $-1$ in $t$, and first term $1/(tA_1(x))$.  The equation
$$
Y_0(x)Y_1(x)= \frac {A_{-1}(x)}{A_1(x)}
$$
then shows that $Y_0(x)$ has valuation 1. 
This is also easily seen on \eqref{Y-alg}, which in turn implies that
$Y_0$ has coefficients in $\qs[x,\bx]$. 
The equation
$$
Y_0(x)+Y_1(x)= \frac 1{tA_1(x)}-\frac{A_0(x)}{A_1(x)}
$$
shows that for $n\ge 1$, the coefficient of $t^n$ in $Y_1(x)$ is also
a Laurent polynomial in $x$. This is not true, in general, of the
coefficients of $t^{-1}$ and $t^0$.

The identity~\eqref{K-inv} 
results from a partial fraction expansion in $y$. 
Note that both $Y_0$ and $1/Y_1$ have valuation 1 in $t$. Hence the
expansion in $y$ of $1/K(x,y)$ gives 
\beq\label{1/K-exp}
[y^j] \frac 1{K(x,y)}= 
\left\{
\begin{array}{ll}
 \displaystyle \frac {Y_0(x)^{-j}}{\sqrt{\Delta(x)}} & \hbox{if } j\le
 0\\
\\
 \displaystyle\frac {Y_1(x)^{-j}}{\sqrt{\Delta(x)}} & \hbox{if } j\ge 0.
\end{array}\right.
\eeq
\end{proof}

\subsection{Canonical factorization of the discriminant $\Delta(x)$}
\label{sec:canonical}
%
 The  kernel can be seen as a polynomial in $y$. Its
 discriminant is then a (Laurent) polynomial in $x$:
$$
\Delta(x)= (1-tA_0(x))^2-4t^2A_{-1}(x) A_{1}(x).
$$
Say $\Delta$ has valuation $-\delta$,
and degree $d$ in $x$. 
Then it  admits $\delta+d$ roots $X_i\equiv X_i(t)$, for $1\le
i\le \delta+d$, which are Puiseux series in 
$t$ with complex coefficients. Exactly $\delta$ of them, say $X_1,
\ldots, X_\delta$,   are 
finite (and actually vanish) at 
$t=0$. The remaining $d$ roots, $X_{\delta+1}, \ldots, X_{\delta+d}$,
have a negative valuation in $t$ and thus diverge at $t=0$. (We refer
to~\cite[Chapter~6]{stanley-vol2} for generalities on solutions of algebraic equations with
coefficients in $\cs(t)$.) We  write
$$
\Delta(x) =\Delta_0\Delta_-(\bx) \Delta_+(x),
$$
with  
\begin{eqnarray*}  
\Delta_-(\bx)\equiv \Delta_-(\bx;t)&= &\prod_{i=1}^{\delta} (1-\bx X_i) ,\\
 \Delta_+(x)\equiv \Delta_+(x;t)&=&\prod_{i=\delta+1 }^{\delta+d}
 (1-x/ X_i),
\end{eqnarray*}
and
$$
\Delta_0\equiv \Delta_0(t)= (-1)^\delta
\frac{[\bx^\delta]\Delta(x)}{\prod_{i=1}^\delta X_i}
=
(-1)^d {[x^d]\Delta(x)}{\prod_{i=\delta+1}^{\delta+d} X_i}.
$$
It can be seen  that  $\Delta_0$
(resp. $\Delta_-(\bx)$, $ \Delta_+(x)$) is a formal power
series in $t$ with constant term 1 and coefficients in $\cs$
(resp. $\cs[ \bx ]$,  $\cs[ x ]$). The above factorization is an
instance of the  canonical factorization of series in
$\qs[x,\bx][[t]]$, which was introduced by
Gessel~\cite{gessel-factorization}, and has 
proved useful in several walk problems since
then~\cite{bousquet-slit,bousquet-schaeffer-slit,Bous05}. 
It will play a crucial role in Section~\ref{sec:half-orbit}.

\section{D-finite solutions via orbit sums}\label{sec:orbit}
In this section, we first show that  19 of the 23 models having a
finite group  can be solved from the corresponding orbit sum. This
includes the 16 models having a vertical symmetry, plus 3
others. Then, we work out the latter 3 models in details, obtaining closed
form expressions for the number of walks ending at prescribed positions.
\subsection{A general result}
\begin{Proposition}\label{prop:orbit-sol}
For  the $23$ models associated with a finite group,
except from the four cases $\cS=\{\bx,\by,xy\}$,  $\cS=\{x,y,\bx\by\}$,
$\cS=\{x,y,\bx,\by,xy,\bx\by\}$  and $\cS=\{x,\bx,xy,\bx \by\}$,
the following holds.  The rational function
$$
R(x,y;t)=\frac 1 {K(x,y;t)} \sum_{g\in G}\sign (g) g(xy)
$$
is a power series in $t$ with coefficients in $\qs(x)[y,\by]$ (Laurent
polynomials in $y$, having coefficients in $\qs(x)$). Moreover, the
positive part in $y$ of $R(x,y;t)$, denoted $R^+(x,y;t)$, is a power series in $t$ with
coefficients in $\qs[x,\bx,y]$. Extracting the positive part in $x$ of
$R^+(x,y;t)$  gives $xyQ(x,y;t)$. In brief,
\beq\label{Q-sol}
xyQ(x,y;t)= [x^>] [y^>] R(x,y;t).
\eeq
In particular, $Q(x,y;t)$ is D-finite.
The number of $n$-step walks ending at $(i,j)$ is
\beq\label{qijn-sol}
q(i,j;n)= [x^{i+1} y^{j+1}] \left(\sum_{g\in G}\sign (g) g(xy)\right) S(x,y)^n
\eeq
where 
$$
S(x,y)=\sum_{(p,q) \in \cS}x^p y^q.
$$
\end{Proposition}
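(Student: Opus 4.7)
The starting point is Proposition~\ref{prop:orbit-sum}: for each of the $19$ models covered by the statement, it yields the identity
\[
\sum_{g \in G} \sign(g)\, g(xyQ(x,y;t)) = R(x,y;t),
\]
where the summand corresponding to $g = \Id$ is precisely $xyQ(x,y;t)$. The plan is to extract the positive part in both $x$ and $y$ from each side. On the left, I would argue that only the identity term survives this extraction; on the right, I would then invoke Proposition~\ref{prop:pos-part} to deduce D-finiteness, and finally read off~\eqref{qijn-sol} by expanding $1/K$ as a geometric series in $t$.

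Fix a non-identity $g \in G$ and expand
\[
g(xyQ(x,y;t)) = g(x)\, g(y)\, \sum_{n \ge 0} \sum_{i,j \ge 0} q(i,j;n)\, g(x)^i\, g(y)^j\, t^n.
\]
In all $19$ orbits under consideration, $g(x)$ and $g(y)$ are Laurent monomials in $x, y$ (or rational factors of the form $C(x)\by$ in the vertical-symmetry case). Writing $g(x) = x^\alpha y^\beta$ and $g(y) = x^\gamma y^\delta$ when both are monomials, each monomial $x^I y^J$ in the expansion has exponents $I = \alpha(i+1) + \gamma(j+1)$ and $J = \beta(i+1) + \delta(j+1)$ for some $i, j \ge 0$. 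The key verification is, by inspection of the orbits in Tables~\ref{tab:classesD3} and~\ref{tab:classesD4} (and of the four-element vertical-symmetry orbit), that for every such $g$ one has $I \le 0$ or $J \le 0$ whenever $i+1 \ge 1$ and $j+1 \ge 1$. For example, in the tandem orbit~\eqref{orbit-tandem} each of the five non-identity elements satisfies $\alpha(i+1)+\gamma(j+1) \le -1$ or $\beta(i+1)+\delta(j+1) \le -1$; and in a vertical-symmetry orbit, the element $(\bx,y)$ yields strictly negative $x$-exponents, while the two elements involving $C(x)\by$ yield strictly negative $y$-exponents, regardless of $C(x)$. Consequently $[x^>][y^>]\bigl(g(xyQ(x,y;t))\bigr) = 0$ for every $g \ne \Id$, so $[x^>][y^>]R(x,y;t) = xyQ(x,y;t)$.

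To establish D-finiteness I would apply Proposition~\ref{prop:pos-part} in two stages. First, $R(x,y;t)$ is a rational power series in $t$ whose coefficients lie in $\qs(x)[y, \by]$, since its numerator $\sum_g \sign(g)\, g(xy)$ is a finite $\qs(x)$-linear combination of monomials in $y$ and $\by$, and $1/K(x,y;t) = \sum_{n\ge 0} t^n S(x,y)^n$ has coefficients in $\qs[x,\bx,y,\by]$. The first part of Proposition~\ref{prop:pos-part} yields that $R^+(x,y;t) := [y^>]R(x,y;t)$ is algebraic over $\cs(x,y,t)$. Second, the orbit-sum identity exhibits $R$ as a finite sum of series with coefficients in $\qs[x,\bx,y,\by]$, so $R^+$ has coefficients in $\qs[x,\bx,y]$. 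The second part of Proposition~\ref{prop:pos-part} therefore implies that $[x^>]R^+ = xyQ(x,y;t)$ is a 3-variable D-finite series, and hence so is $Q(x,y;t)$.

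Finally, for~\eqref{qijn-sol}, I would expand $1/K(x,y;t) = \sum_{n \ge 0} t^n S(x,y)^n$, so that the coefficient of $t^n$ in $R(x,y;t)$ is the Laurent polynomial $\bigl(\sum_g \sign(g)\, g(xy)\bigr) S(x,y)^n$. For $i, j \ge 0$ one has $i+1, j+1 \ge 1$, so the extraction $[x^{i+1} y^{j+1}]$ automatically selects a monomial in the strictly positive quadrant, which by the preceding paragraphs equals $[x^{i+1} y^{j+1}][t^n] \bigl(xyQ(x,y;t)\bigr) = q(i,j;n)$. The main obstacle is the orbit-by-orbit verification in the second paragraph: although conceptually routine, it requires confirming in each of the $16 + 3 = 19$ orbits that the non-identity elements push the strictly positive quadrant out of itself in the exponent lattice.
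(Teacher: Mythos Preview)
Your approach is essentially the paper's: use the orbit-sum identity, kill all non-identity terms by successive positive-part extractions in $y$ then $x$, and invoke Proposition~\ref{prop:pos-part}. The extraction argument in your second paragraph is correct and matches the paper's two-stage treatment (first $[y^>]$ removes the $C(x)\by$ terms, then $[x^>]$ removes the $(\bx,y)$ term).

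There is, however, a genuine gap in your D-finiteness paragraph. You write that ``the orbit-sum identity exhibits $R$ as a finite sum of series with coefficients in $\qs[x,\bx,y,\by]$''. This is false for most of the sixteen vertical-symmetry models: the summand $\bx\,C(x)\by\,Q(\bx,C(x)\by)$ has $t^n\by^{j+1}$-coefficient $\sum_i q(i,j;n)\,\bx^{i+1}C(x)^{j+1}$, and $C(x)=A_{-1}(x)/A_1(x)$ is generally a genuine rational function (e.g.\ $C(x)=1/(x+\bx)$ in model~5 of Table~\ref{tab:classesD2}). So $R$ itself need not have Laurent-polynomial coefficients in $x$, and your stated reason for $R^+\in\qs[x,\bx,y][[t]]$ does not hold.

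The correct justification, which the paper gives and which is already implicit in your second paragraph, is to look at the left-hand side \emph{after} extracting $[y^>]$: only $xyQ(x,y)-\bx yQ(\bx,y)$ survives, and this expression visibly has coefficients in $\qs[x,\bx,y]$. That is what licenses the second application of Proposition~\ref{prop:pos-part}. Replace your faulty sentence with this observation and the proof is complete.
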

\begin{proof}
We begin with the 16 models associated with a group of order 4
(Table~\ref{tab:classesD2}). We will then address the three remaining
cases,  $\cS=\{\bx, y, x\by \}$,
$\cS=\{x,\bx, x\by, \bx y \}$ and $\cS=\{x,\bx, y, \by, x\by, \bx y
\}$.

All models with a group of order 4 exhibit a vertical symmetry. That is,
$K(x,y)=K(\bx,y)$. As discussed in the examples of
Section~\ref{sec:group}, the orbit of $(x,y)$ reads 
$$
(x,y)  
 {\overset{\Phi}{\longleftrightarrow}} (\bx,y)
 {\overset{\Psi}{\longleftrightarrow}} (\bx,C(x)\by)
 {\overset{\Phi}{\longleftrightarrow}} (x,C(x)\by)
 {\overset{\Psi}{\longleftrightarrow}} (x,y),
$$
with $C(x)=\frac{A_{-1}(x)}{A_{1}(x)}$. The orbit sum of
Proposition~\ref{prop:orbit-sum} reads
$$
xyQ(x,y)- \bx y Q(\bx,y)+ \bx\by C(x) Q(\bx, C(x)\by)+ x\by C(x)
Q(x,C(x)\by)
=R(x,y).
$$
Clearly, both sides of this identity are series in $t$ with
coefficients in $\qs(x)[y,\by]$. Let us extract the positive part in
$y$: only the first two terms of the left-hand side contribute, and we
obtain
$$
xyQ(x,y)- \bx y Q(\bx,y)=R^+(x,y).
$$
It is now clear from the left-hand side that $R^+(x,y)$ has
coefficients in $\qs[x,\bx,y]$.
Extracting the positive part in $x$ gives 
the expression~\eqref{Q-sol} for $xyQ(x,y)$,
since the second term  of the left-hand side does not contribute.

Let us now examine the cases $\cS=\{\bx, y, x\by \}$,
$\cS=\{x,\bx, x\by, \bx y \}$ and $\cS=\{x,\bx, y, \by, x\by, \bx y
\}$. For each of them, the orbit of $(x,y)$
consists of pairs of the form $(x^ay^b, x^c y^d)$ for integers $a, b,
c$ and $d$  (see Tables~\ref{tab:classesD3}
and~\ref{tab:classesD4}). This implies that $R(x,y)$ is a series in $t$
with 
coefficients in $\qs[x,\bx,y, \by]$. When extracting the positive part
in $x$ and $y$ from the orbit sum of Proposition~\ref{prop:orbit-sum},
it is easily 
checked, in each of the three cases, that only the term $xyQ(x,y)$
remains in the left-hand side. The  expression of $xyQ(x,y)$ follows.

Proposition~\ref{prop:pos-part} then implies that $Q(x,y;t)$ is
D-finite. The expression of $q(i,j;n)$ follows from a mere coefficient
extraction. 
\end{proof}

\subsection{Two models with algebraic specializations:
  $\{\bx,y,x\by  \}$ and $\{x,\bx,y,\by, x\by , \bx y  \}$} 
\label{sec:tandem}
%
Consider the case  $\cS=\{\bx,y,x\by  \}\equiv\{\WW, \NN, \SE\}$. 
A walk $w$ with steps taken from~$\cS$ remains in the first quadrant
if each of its prefixes contains more \NN\ steps that \SE\ steps, and
more \SE\ steps than \WW\ steps.
These walks are thus in
bijection with Young tableaux of
height at most~3 (Figure~\ref{fig:young}), or, via the Schensted
correspondence~\cite{sagan-book}, with involutions having no
decreasing subsequence of length~4. The enumeration of Young tableaux is
a well-understood topic. In particular, the number of tableaux of a
given shape ---and hence the number of $n$-step walks ending at a prescribed
position--- can be written in closed form using
the hook-length formula~\cite{sagan-book}. It is also
known that the total number of tableaux of size $n$ 
and height at most~3
is the $n$th Motzkin number~\cite{regev}.  
Here, we recover  these two results (and refine the latter),
using  orbit and half-orbit sums.
Then, we show that the case $\cS=\{x,\bx,y,\by,
x\by , \bx y  \}$, which, to our knowledge, has never been solved,
behaves  very similarly. In particular, the total number of $n$-step
walks confined to the quadrant is also related to Motzkin numbers
(Proposition~\ref{prop:double-tandem}). 

\begin{figure}[ht]
\begin{center}
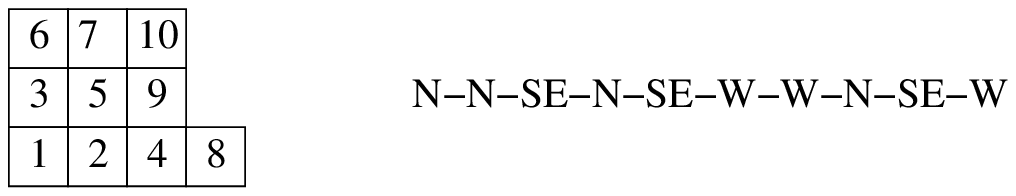
\end{center}
\caption{A Young tableau of height 3 and the corresponding quarter
  plane walk with steps in $\{\WW, \NN, \SE\}$.}
\label{fig:young}
\end{figure}

\begin{Proposition}\label{prop:tandem}
The  \gf \ of walks with steps \WW, \NN,  \SE\ confined to the
  quarter plane  is the non-negative part (in $x$ and $y$) of a
  rational series in $t$ having coefficients in $\qs[x,\bx,y,\by]$:
$$
Q(x,y;t)= [x^\ge y^\ge] \tilde R(x,y;t), \quad \hbox{with} \quad
\tilde R(x,y;t)=\frac{  \left( 1-\bx\by \right)  
\left( 1-\bx^2y \right)  
\left( 1-x\by ^2\right)}{1-t(\bx+y+x\by)}.
$$
In particular, $Q(x,y)$ is D-finite.
 The number of  walks of length $n=3m+2i+j$ ending at $(i,j)$ is
$$
q(i,j;n)=\frac{(i+1)(j+1)(i+j+2) (3m+2i+j)!}{m!(m+i+1)!(m+i+j+2)!}.
$$
In particular,
$$
q(0,0;3m)=\frac{2(3m)!}{m!(m+1)!(m+2)!}\sim \sqrt 3\, \frac{3^{3m}}{\pi m^4},
$$
so that $Q(0,0;t)$ ---and hence $Q(x,y;t)$--- is transcendental.

However, the specialization $Q(x,1/x;t)$ is algebraic of degree $2$:
$$
Q(x,1/x;t)=
{\frac {1-t\bx-
\sqrt {1-2\,\bx t+{t}^{2}\bx^2-4\,{t}^{2}{x}}}{2{x}{t}^{2}}}
.
$$
In particular, the total number of $n$-step walks confined to the quadrant
is the $n^{\hbox{th}}$ Motzkin number:
$$
Q(1,1;t)=
{\frac {1-t-\sqrt {(1+t)(1-3t)}}{2{t}^{2}}}
=\sum_{n\ge 0} t^n \sum_{k=0}^{\lfloor n/2\rfloor} \frac 1{k+1}{n\choose{2k}}{{2k}\choose k}.
$$
\end{Proposition}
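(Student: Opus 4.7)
The proof splits into three largely independent parts.

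The first---the integral representation of $Q(x,y;t)$ and its D-finiteness---is a direct instance of Proposition~\ref{prop:orbit-sol}. The group $G(\cS)$ has order $6$, with orbit displayed in~\eqref{orbit-tandem}, and all six orbit points are monomial in $(x,y)$. A direct calculation gives
\[
\sum_{g\in G}\sign(g)\,g(xy)=xy-\bx y^2+\bx^2y-\bx\by+x\by^2-x^2\by=xy(1-\bx\by)(1-\bx^2y)(1-x\by^2),
\]
and since $K(x,y)=1-t(\bx+y+x\by)$, the rational function $R(x,y;t)$ of Proposition~\ref{prop:orbit-sol} equals $xy\cdot\tilde R(x,y;t)$. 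Proposition~\ref{prop:orbit-sol} then yields $Q(x,y;t)=[x^{\ge}y^{\ge}]\tilde R(x,y;t)$, and Proposition~\ref{prop:pos-part} gives the D-finiteness.

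The second part extracts the closed form for $q(i,j;n)$ from~\eqref{qijn-sol}: we expand $S(x,y)^n=(\bx+y+x\by)^n$ by the multinomial theorem and collect the coefficient $[x^{i+1}y^{j+1}]$ in the six signed contributions of the orbit sum. Standard manipulation collapses the alternating sum of six multinomial coefficients to the stated product; alternatively, the bijection of Figure~\ref{fig:young} identifies these walks with standard Young tableaux of shape $(m+i+j,m+i,m)$, and the hook-length formula yields the same expression. Stirling's formula applied to $q(0,0;3m)=2(3m)!/(m!(m+1)!(m+2)!)$ produces the asymptotic estimate; since the exponent $4$ of $m$ is a positive integer, the transcendence criterion of~\cite{flajolet-context-free} forces $Q(0,0;t)$, and hence $Q(x,y;t)$, to be transcendental.

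The third and most delicate part is the algebraicity of $Q(x,1/x;t)$, which I would obtain via a \emph{half}-orbit sum. Adding the functional equation of Lemma~\ref{lem:eq-func} evaluated at the three orbit points $(x,y)$, $\Phi(x,y)=(\bx y,y)$ and $\Psi\Phi(x,y)=(\bx y,\bx)$ with alternating signs $+,-,+$, one checks that the unknowns $Q(0,y)$ (appearing in the first two terms) and $Q(\bx y,0)$ (appearing in the last two) both cancel, leaving
\[
K(x,y)\bigl(xyQ(x,y)-\bx y^2Q(\bx y,y)+\bx^2yQ(\bx y,\bx)\bigr)=xy-\bx y^2+\bx^2y-tx^2Q(x,0)-t\bx Q(0,\bx).
\]
Specializing $y$ to the small kernel root $Y_0(x)$ of Lemma~\ref{lem:roots} makes the left-hand side vanish and yields $tx^2Q(x,0)+t\bx Q(0,\bx)=xY_0-\bx Y_0^2+\bx^2Y_0$; meanwhile, the original functional equation at $y=\bx$ reads $K(x,\bx)Q(x,\bx)=1-tx^2Q(x,0)-t\bx Q(0,\bx)$. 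Eliminating the combination $tx^2Q(x,0)+t\bx Q(0,\bx)$ between these two identities, and simplifying with the kernel relation $tY_0^2=(1-t\bx)Y_0-tx$, collapses everything to the remarkably simple one-line identity $xt\,Q(x,\bx;t)=Y_0(x)$. From this, the explicit formula for $Q(x,1/x;t)$, the underlying quadratic equation $xt^2Q^2-(1-t\bx)Q+1=0$, the Motzkin generating function at $x=1$ (using $\Delta(1)=(1+t)(1-3t)$), and the classical identity $M_n=\sum_k\binom{n}{2k}\binom{2k}{k}/(k+1)$ all follow immediately. The main obstacle is identifying the correct half-orbit combination that simultaneously kills the two unknowns $Q(0,y)$ and $Q(\bx y,0)$; once this cancellation is spotted, the substitution $y=Y_0$ and the kernel equation do the rest.
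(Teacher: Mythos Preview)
Your proof is correct and follows essentially the same approach as the paper. The three parts match closely: the orbit-sum extraction via Proposition~\ref{prop:orbit-sol}, the multinomial coefficient extraction (where both you and the paper are somewhat terse), and the half-orbit sum over the first three orbit points followed by the two specializations $y=\bx$ and $y=Y_0(x)$. The only cosmetic difference is that the paper obtains the relation $K(x,\bx)Q(x,\bx)=1-tx^2Q(x,0)-t\bx Q(0,\bx)$ by specializing the \emph{half-orbit} identity at $y=\bx$ (where the second and third $Q$-terms on the left collapse), whereas you obtain it directly from Lemma~\ref{lem:eq-func}; both routes yield the same equation, and your additional simplification to $xt\,Q(x,\bx)=Y_0(x)$ via the kernel relation is a nice extra step that the paper leaves implicit.
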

\begin{proof}
The orbit of $(x,y)$ under the action of $G$ is shown
in~\eqref{orbit-tandem}. The first result of the proposition is a
direct application of 
Proposition~\ref{prop:orbit-sol}, with $\tilde R(x,y)=R(x,y)/(xy)$. 
It is then an easy task to extract the coefficient of $x^iy^j t^n$ in
$\tilde R(x,y;t)$, using
$$
[x^i y^j] (\bx+y+x\by)^n= \frac{ (3m+2i+j)!}{m!(m+i)!(m+i+j)!}
$$
if $n=3m+2i+j$.

\medskip
The algebraicity of $Q(x, \bx)$ can be proved as follows: let us form
the alternating sum of the three equations obtained from Lemma~\ref{lem:eq-func}
by replacing $(x,y)$ by the first 3 elements of the orbit. These
elements are those in which $y$ occurs with a non-negative exponent.
This gives
$$
 K(x,y)(
xy Q ( x,y ) - \bx y^2Q ( {\bx} {y},y )
+\bx^2 y Q ( { {\bx y}},{\bx} ))
=
 {xy-\bx y^2+\bx^{2}y} - tx^2 Q ( x,0 )-t \bx Q ( 0,{\bx} ).
$$
We now specialize this equation to two values of $y$.
First, replace $y$ by $\bx$: the second and third occurrence of $Q$ in
the left-hand side cancel out, leaving
$$
K(x,\bx)Q ( x,\bx )=
1- tx^2 Q ( x,0 )-t \bx Q ( 0,{\bx} ).
$$
For the second specialization, replace $y$ by the root $Y_0(x)$ of the
kernel (see~\eqref{Y-sol}). 
This is  a well-defined substitution, as $Y_0(x)$ has valuation 1 in
$t$.
The left-hand side vanishes, leaving
$$
0= {xY_0(x)-\bx Y_0(x)^2}+\bx^{2}Y_0(x)
- tx^2 Q ( x,0 )-t \bx Q ( 0,{\bx} ).
$$
By combining the last two equations, we obtain
$$
K(x,\bx)Q ( x,\bx )=
1- xY_0(x)+\bx Y_0(x)^2-\bx^{2}Y_0(x).
$$
The expression of $Q(x, \bx)$ follows.

\end{proof}

The case $\cS=\{\NN,\SS,\WW, \EE,\SE,\NW\}$ is very similar to the previous
one. In particular, the orbit of $(x,y)$  is the same in both
cases. The proof of the previous proposition translates almost
verbatim. Remarkably, Motzkin numbers occur again.
\begin{Proposition}\label{prop:double-tandem}
The 
 \gf \ of walks with steps \NN, \SS, \WW, \EE, \SE, \NW\ confined to the
  quarter plane  is the non-negative part (in $x$ and $y$) of a
  rational function: 
$$
Q(x,y;t)= [x^\ge y^\ge] \tilde R(x,y;t), \quad \hbox{with} \quad
\tilde R(x,y;t)=\frac{(1-\bx\by)\left( 1-\bx^2y \right)  
\left( 1-x\by ^2\right)}{1-t(x+y+\bx+\by+x\by+\bx y)}.
$$
In particular, $Q(x,y;t)$ is D-finite.
The specialization $Q(x,1/x;t)$ is algebraic of degree $2$:
$$
Q(x,1/x;t)=
{\frac {1-tx-t\bx+ \sqrt {(1-t(x+\bx))^2-4t^2(1+x)(1+\bx)}}
{2{t}^{2} \left( 1+x \right)\left( 1+\bx \right)}}.
$$
In particular, the total number of $n$-step walks confined to the
quadrant is $2 ^n$ times the $n^{\hbox{th}}$ Motzkin number:
$$
Q(1,1;t)=
{\frac {1-2t-\sqrt {(1+2t)(1-6t)}}{8{t}^{2}}}
=\sum_{n\ge 0} t^n \sum_{k=0}^{\lfloor n/2\rfloor} \frac {2^n}{k+1}{n\choose{2k}}{{2k}\choose k}.
$$
\end{Proposition}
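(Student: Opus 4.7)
The strategy is to mimic the proof of Proposition~\ref{prop:tandem}, exploiting that both step sets produce the same orbit under the action of $G$. Writing $S(x,y)=x+\bx+y+\by+x\by+\bx y$, one has $A_{-1}(x)=1+x$, $A_{1}(x)=1+\bx$, $B_{-1}(y)=1+y$ and $B_{1}(y)=1+\by$; a direct computation gives $\Phi(x,y)=(\bx y,y)$ and $\Psi(x,y)=(x,x\by)$, so the orbit of $(x,y)$ is the six-element orbit displayed in~\eqref{orbit-tandem}. The orbit sum factors nicely:
\[
\sum_{g\in G}\sign(g)\,g(xy)=xy-\bx y^{2}+\bx^{2}y-\bx\by+x\by^{2}-x^{2}\by=xy(1-\bx\by)(1-\bx^{2}y)(1-x\by^{2}).
\]
Since $(y,x)$ is not in this orbit, the model is not among the four exceptions of Proposition~\ref{prop:orbit-sol}; that proposition therefore yields $Q(x,y;t)=[x^{\ge}y^{\ge}]\tilde R(x,y;t)$, and D-finiteness follows from Proposition~\ref{prop:pos-part}.

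For $Q(x,\bx;t)$ I would repeat the half-orbit argument used in Proposition~\ref{prop:tandem}. Writing the functional equation of Lemma~\ref{lem:eq-func} at the first three elements of the orbit (those in which $y$ has non-negative exponent in both coordinates) and taking the alternating sum, the contributions involving $Q(0,y)$ and $Q(\bx y,0)$ cancel pairwise, leaving
\[
K(x,y)\!\left(xyQ(x,y)-\bx y^{2}Q(\bx y,y)+\bx^{2}yQ(\bx y,\bx)\right)=xy-\bx y^{2}+\bx^{2}y-tx(1+x)Q(x,0)-t\bx(1+\bx)Q(0,\bx).
\]
Setting $y=\bx$ collapses the two $Q$-terms on the left-hand side into $K(x,\bx)Q(x,\bx)$, while setting $y=Y_{0}(x)$ annihilates the kernel and expresses $tx(1+x)Q(x,0)+t\bx(1+\bx)Q(0,\bx)$ as $xY_{0}-\bx Y_{0}^{2}+\bx^{2}Y_{0}$. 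Combining gives
\[
K(x,\bx)\,Q(x,\bx)=1-xY_{0}+\bx Y_{0}^{2}-\bx^{2}Y_{0}.
\]
Using the quadratic relation $t(1+\bx)Y_{0}^{2}=(1-t(x+\bx))Y_{0}-t(1+x)$ satisfied by $Y_{0}$, a short manipulation shows that the right-hand side equals $Y_{0}K(x,\bx)/(t(1+x))$; the factor $K(x,\bx)$ therefore cancels and $Q(x,\bx;t)=Y_{0}(x)/(t(1+x))$. Substituting the expression~\eqref{Y-sol} for $Y_{0}$ yields the stated closed form.

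The last assertion then follows by setting $x=1$: one has $\Delta(1)=(1-2t)^{2}-16t^{2}=(1+2t)(1-6t)$ and $2t^{2}(1+x)(1+\bx)\big|_{x=1}=8t^{2}$, whence
\[
Q(1,1;t)=\frac{1-2t-\sqrt{(1+2t)(1-6t)}}{8t^{2}}=M(2t),
\]
where $M(u)=\sum_{n}M_{n}u^{n}$ denotes the Motzkin generating function; expanding $M(2t)=\sum_{n\ge 0}2^{n}M_{n}t^{n}$ produces the hypergeometric sum of the proposition. The only non-routine step is the algebraic cancellation of $K(x,\bx)$ leading to $Q(x,\bx;t)=Y_{0}/(t(1+x))$; one also has to pick the branch of the square root with $Y_{0}$ of valuation $+1$ in $t$, so that $Q$ is a bona fide power series.
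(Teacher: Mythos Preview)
Your proof is correct and follows precisely the approach the paper indicates: it says that ``the proof of the previous proposition translates almost verbatim'', and you have carried out that translation, with the same orbit, the same half-orbit elimination at $y=\bx$ and $y=Y_0(x)$, and the same algebraic cancellation of $K(x,\bx)$. The only cosmetic point is that your justification for applying Proposition~\ref{prop:orbit-sol} (``$(y,x)$ is not in the orbit'') is a shortcut for saying that $\cS$ is not among the four listed exceptions, which is indeed the case.
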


\subsection{The case $\cS=\{x, \bx, x\by, \bx y\}$}\label{sec:gouyou}
%
Consider quadrant walks made of \EE, \WW, \NW\ and \SE\ steps. 
These walks are easily seen to be in bijection with pairs of
non-intersecting prefixes of Dyck paths: To pass from such a pair to a
quadrant walk, parse the pair of paths from left to right
and assign a direction  (\EE, \WW, \NW\ or \SE) to each pair of steps, as described in Figure~\ref{fig:pair-dyck}. Hence the number of
walks ending at a 
prescribed position is given by a 2-by-2 Gessel-Viennot
determinant~\cite{gessel-viennot} and we can expect a closed form
expression for this number.

\begin{figure}[ht]
\begin{center}
\input{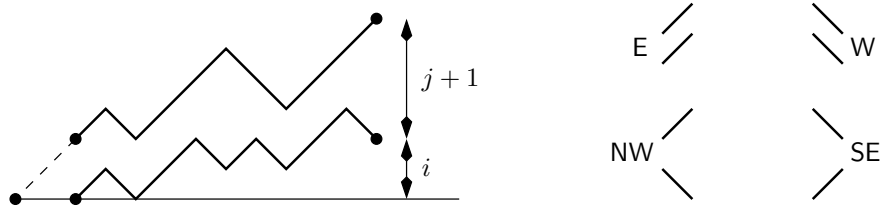}
\end{center}
\caption{A pair of non-intersecting prefixes of Dyck paths
  corresponding to the quarter
  plane walk \EE-\WW-\EE-\EE-\NW-\SE-\WW-\EE-\EE-\NW.}
\label{fig:pair-dyck}
\end{figure}

Also, note that the linear
transformation $(i,j) \mapsto (i+j,j)$ maps these quarter plane walks
bijectively to walks with 
\EE, \WW, \NN\ and \SS\ steps confined to $\{(i,j): 0\le j \le
i\}$. In this form, they
were  studied by Gouyou-Beauchamps, who  proved that the
number of $n$-step walks ending on the $x$-axis is a 
product of Catalan numbers~\cite{gouyou-chemins-montreal}.
His interest in these walks came from a bijection he had established
between walks ending on the $x$-axis and  Young tableaux of height at most
4~\cite{gouyou-tableaux}. 
This bijection is far from being as obvious as the one that relates
tableaux of height at most~3 to quarter plane walks with
\WW, \NN\ and \SW\ steps (Section~\ref{sec:tandem}).

Here, we first specialize Proposition~\ref{prop:orbit-sol} to obtain the number 
of $n$-step walks ending at $(i,j)$ (Proposition~\ref{prop:gouyou}).
Then, we perform an indefinite 
summation on $i$, or $j$, or both $i$ and $j$ to obtain additional closed form
expressions, including  Gouyou-Beauchamps's (Corollary~\ref{coro:gouyou}).

\begin{Proposition}\label{prop:gouyou}
The  \gf \ of walks with steps \EE, \WW, \NW, \SE\ confined to the
  quarter plane  is the non-negative part (in $x$ and $y$) of a
  rational function: 
$$
Q(x,y;t)= [x^\ge y^\ge] \tilde R(x,y;t),
$$
with
$$
\tilde R(x,y;t)=\frac{
  \left(1- \bx \right)  \left(1+ \bx \right) \left( 1-\by \right)
 \left(1-\bx^2 y\right)  \left( 1-x\by \right)  \left( 1+x\by \right)}
{1-t(x+\bx+x\by+\bx y)} .
$$
In particular, $Q(x,y;t)$ is D-finite.
 The number of  walks of length $n=2m+i$ ending at $(i,j)$ is
$$
q(i,j;2m+i)=\frac{(i+1)(j+1)(i+j+2)(i+2j+3)}{(2m+i+1)(2m+i+2)(2m+i+3)^2}
{{2m+i+3}\choose {m-j}}{{2m+i+3}\choose {m+1}}.
$$
In particular,
$$
q(0,0;2m)=
\frac{6 (2m)!(2m+2)!}{m!(m+1)!(m+2)!(m+3)!}
\sim \frac{24\cdot 4^{2m}}{\pi m ^5}
$$
so that $Q(0,0;t)$ ---and hence $Q(x,y;t)$--- is transcendental.
\end{Proposition}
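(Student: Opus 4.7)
The plan is to apply Proposition~\ref{prop:orbit-sol} directly. With $S(x,y)=x+\bx+x\by+\bx y$, a quick computation gives $A_{-1}(x)/A_1(x)=x^2$ and $B_{-1}(y)/B_1(y)=y$, so
$\Phi(x,y)=(\bx y,y)$ and $\Psi(x,y)=(x,x^2\by)$.
Setting $\comp=\Psi\circ\Phi$, one checks that $\comp^2(x,y)=(\bx,\by)$ and $\comp^4=\Id$, confirming that $|G|=8$. The orbit of $(x,y)$ consists of the eight pairs
\[
(x,y),\ (\bx y,y),\ (\bx y,\bx^2 y),\ (\bx,\bx^2 y),\ (\bx,\by),\ (x\by,\by),\ (x\by,x^2\by),\ (x,x^2\by),
\]
with alternating signs $+,-,+,-,+,-,+,-$. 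Summing $\sign(g)\,g(xy)$ over this orbit and factoring yields
\[
\sum_{g\in G}\sign(g)\,g(xy)=xy(1-\bx)(1+\bx)(1-\by)(1-\bx^2 y)(1-x\by)(1+x\by),
\]
which is exactly $xy$ times the numerator of $\tilde R$. Proposition~\ref{prop:orbit-sol} then immediately delivers both the identity $Q(x,y;t)=[x^\ge y^\ge]\tilde R(x,y;t)$ and the D-finiteness of $Q(x,y;t)$.

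For the closed form count I would use the coefficient formula of the same proposition, namely $q(i,j;n)=[x^{i+1}y^{j+1}]\bigl(\sum_g\sign(g)\,g(xy)\bigr)S(x,y)^n$. The key tool is the identity $xy\,S(x,y)=(1+y)(y+x^2)$, which gives the compact extraction rule
\[
[x^a y^b]S(x,y)^n=\binom{n}{(n+a)/2}\binom{n}{(n+a)/2+b}
\]
(and zero when $n+a$ is odd). Substituting this into the eight-term alternating orbit sum and setting $n=2m+i$ produces an alternating combination of eight products of binomial coefficients. Pairing the terms by their orbit structure (e.g.\ $e$ with $\Phi\circ\comp^2$, $\comp$ with $\Phi\circ\comp^3$, and so on) and applying Pascal's rule repeatedly collapses the expression to the claimed product $\binom{2m+i+3}{m-j}\binom{2m+i+3}{m+1}$ divided by $(2m+i+1)(2m+i+2)(2m+i+3)^2$, with the polynomial numerator $(i+1)(j+1)(i+j+2)(i+2j+3)$ emerging from the binomial differences.

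Specialising at $i=j=0$, $n=2m$ gives the stated expression for $q(0,0;2m)$, and Stirling's approximation $\binom{2m}{m}\sim 4^m/\sqrt{\pi m}$ yields
\[
q(0,0;2m)=\binom{2m}{m}^2\cdot\frac{6(2m+1)(2m+2)}{(m+1)^3(m+2)^2(m+3)}\sim\frac{24\cdot 4^{2m}}{\pi m^5}.
\]
Hence $[t^{2m}]Q(0,0;t)\sim\kappa\,16^m\,m^{-5}$ with exponent $5$ a positive integer, so by the asymptotic criterion of \cite{flajolet-context-free} recalled at the end of the Preliminaries, $Q(0,0;t)$ is transcendental. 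As algebraicity is preserved under specialisation, $Q(x,y;t)$ is transcendental too, though D-finite by the previous paragraph.

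The main obstacle I foresee is the binomial simplification in the second paragraph: turning an alternating sum of eight products of binomials into the compact form with a quartic polynomial prefactor times a product of two binomials. This identity reflects the Lindström--Gessel--Viennot determinant associated with the bijection to pairs of non-intersecting Dyck path prefixes described just before the proposition, so exploiting that structure should streamline the bookkeeping; nevertheless, verifying the precise prefactor is the one real computation in the proof.
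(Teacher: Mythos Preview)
Your proposal is correct and follows essentially the same route as the paper: apply Proposition~\ref{prop:orbit-sol} using the eight-element orbit listed in Table~\ref{tab:classesD4}, factor the step polynomial (the paper writes $S=(1+\by)(x+\bx y)$, which is equivalent to your $xyS=(1+y)(y+x^2)$) to obtain the product-of-binomials formula for $[x^iy^j]S^n$, and then conclude transcendence from the $m^{-5}$ asymptotics via the criterion recalled in the Preliminaries. The only difference in tone is that the paper dismisses the eight-term binomial simplification as ``an easy task'', whereas you rightly flag it as the one genuine computation; your suggestion to view it through the Lindstr\"om--Gessel--Viennot $2\times 2$ determinant (already alluded to in the text preceding the proposition) is a natural way to organize that check.
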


Before we prove this proposition, let us perform summations on $i$ and
$j$.
Recall that a hypergeometric sequence $(f(k))_k$ is \emm Gosper
summable, (in $k$) if there exists
another hypergeometric sequence $(g(k))_k$ such that $f(k)=g(k+1)-g(k)$. In this
case, indefinite summation can be performed in closed form~\cite[Chapter~5]{AB}:
$$
\sum_{k=k_0}^{k_1} f(k)= g(k_1+1)-g(k_0).
$$
 The numbers $q(i,j;n)$ of Proposition~\ref{prop:gouyou} have
 remarkable Gosper properties, from which 
 we now derive Gouyou-Beauchamps's result for walks ending on the
 $x$-axis, and  more.
\begin{Corollary}\label{coro:gouyou}
  The numbers $q(i,j;n)$ are Gosper-summable in $i$ and in $j$. Hence   sums of the form $\sum_{i=i_0}^{i_1} q(i,j;n)$ and
$\sum_{j=j_0}^{j_1} q(i,j;n)$ have  closed form expressions. In
particular, the  number of  walks of length $n$ ending at ordinate $j$ is
$$
q( -, j;n):=\sum_{i\ge 0} q(i,j;n)=
\left\{
\begin{array}{ll}
\displaystyle \frac{(j+1)(2m)!(2m+2)!}{(m-j)!(m+1)!^2(m+j+2)!} 
&\hbox{if } n=2m,\\
\\
\displaystyle \frac{2(j+1)(2m+1)!(2m+2)!}{(m-j)!(m+1)!(m+2)!(m+j+2)!}
 & \hbox{if } n=2m+1.
\end{array}\right.
$$
Similarly,  
the  number of  walks of length $n=2m+i$ ending at abscissa $i$ is
$$
q(i, -\,;2m+i):=\sum_{j\ge 0} q(i,j;2m+i)=
\frac{(i+1)(2m+i)!(2m+i+2)!}{m!(m+1)!(m+i+1)!(m+i+2)!}.
$$
In particular,   as many $2m$-step walks end on the $x$- and
$y$-axes:
$$
q( -, 0;2m)=q(0, -\,;2m)=\frac{(2m)!(2m+2)!}{m!(m+1)!^2(m+2)!}.
$$
\smallskip
The numbers  $q(i, -\,;n)$ and $q( -, j;n)$ defined above are again
Gosper-summable in $i$ and $j$ respectively. Hence  sums of the form 
$$
\sum_{i=i_0}^{i_1}
q(i, -\, ;n)=\sum_{i=i_0}^{i_1}\sum_j q(i,j;n)
\quad 
\hbox{ and  }
\quad 
\sum_{j=j_0}^{j_1}
q(-, j;n)=\sum_{j=j_0}^{j_1}\sum_i q(i,j;n),
$$
 which count walks ending
between certain vertical or horizontal lines, have simple closed form
expressions. In particular,
the  total number of  quarter plane walks of length $n$ is
$$
q(-, - \, ;n):=\sum_{i,j\ge 0} q(i,j;n)=\left\{
\begin{array}{ll}
\displaystyle \frac{(2m)!(2m+1)!}{m!^2(m+1)!^2} &\hbox{if } n=2m,\\
\\
\displaystyle \frac{(2m+1)!(2m+2)!}{m!(m+1)!^2(m+2)!} & \hbox{if } n=2m+1.
\end{array}\right.
$$
The asymptotic behaviours of these numbers are found to be
$$
q(-, - \,; n)\sim c_1 \cdot 4^n/n^2,
\quad 
q(0, - \,; n)\sim c_2 \cdot 4^n/n^3,
\quad
q( -,0; n)\sim c_3 \cdot 4^n/n^3,
$$
 which shows that the series $Q(1,1;t)$, $Q(0,1;t)$ and
$Q(1,0;t)$ are transcendental.
\end{Corollary}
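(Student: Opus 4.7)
The plan is to leverage the explicit hypergeometric formula for $q(i,j;2m+i)$ obtained in Proposition~\ref{prop:gouyou} and proceed by a sequence of telescoping summations. Since $q(i,j;n)$ is a quotient of factorials times a polynomial factor, I would first run Gosper's algorithm on $q(i,j;n)$ viewed as a hypergeometric sequence in $i$ (resp. $j$), producing explicit antidifferences $g(i,j;n)$ and $h(i,j;n)$ satisfying $q(i,j;n) = g(i+1,j;n) - g(i,j;n)$ (and similarly $q(i,j;n) = h(i,j+1;n) - h(i,j;n)$). Telescoping then gives closed forms for any partial sum $\sum_{i=i_0}^{i_1} q(i,j;n)$ and $\sum_{j=j_0}^{j_1} q(i,j;n)$; taking $i_0=0$, $i_1=\infty$ (resp. $j_0=0$, $j_1=\infty$) and using that $q$ vanishes outside its natural range produces the two explicit formulas claimed for $q(-,j;n)$ and $q(i,-\,;n)$. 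The identity $q(-,0;2m)=q(0,-\,;2m)$ is then read off by direct inspection of the two closed forms.

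Next, I would reapply Gosper's algorithm to the single-variable sums $q(i,-\,;n)$ and $q(-,j;n)$, which remain hypergeometric in their free variable; telescoping their antidifferences yields the double-sum closed forms, and in particular the two (parities $n=2m$ and $n=2m+1$) expressions for the total $q(-,-\,;n)$. This step also handles all intermediate partial sums $\sum_{i=i_0}^{i_1} q(i,-\,;n)$ and $\sum_{j=j_0}^{j_1} q(-,j;n)$ at once.

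For the asymptotic statements, Stirling's formula applied to the ratios of factorials appearing in $q(-,-\,;n)$, $q(0,-\,;n)$, $q(-,0;n)$ immediately yields the expected $4^n/n^k$ behaviour with explicit constants $c_1,c_2,c_3$ and $k\in\{2,3\}$. Since each of these $k$'s is a positive integer, the criterion recalled at the end of Section~1.3 (a real series whose coefficients behave like $\kappa\mu^n n^{-k}$ with $k\in\{1,2,3,\ldots\}$ cannot be algebraic) rules out algebraicity of $Q(1,1;t)$, $Q(0,1;t)$ and $Q(1,0;t)$; combined with the D-finiteness established in Proposition~\ref{prop:gouyou}, they are therefore D-finite transcendental.

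The main obstacle is bookkeeping rather than conceptual: one must verify that the relevant Gosper certificates actually exist (Gosper-summability is strictly stronger than hypergeometric summability), keep track of the boundary contributions (the factors $(i+1)$ and $(j+1)$ make the lower boundary drop out cleanly, while at the upper boundary the binomial coefficients force the telescoped term to vanish), and then repackage each intermediate sum into the correct hypergeometric normalization before the second iteration of Gosper is applied. The only other delicate point is matching the constants in the asymptotic expansions, but that is a purely mechanical Stirling computation.
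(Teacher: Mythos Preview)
Your proposal is correct and follows essentially the same approach as the paper: apply Gosper's algorithm to $q(i,j;n)$ in each variable, telescope to obtain $q(-,j;n)$ and $q(i,-\,;n)$, then iterate to get $q(-,-\,;n)$, and finish with Stirling plus the $\kappa\mu^n n^{-k}$ transcendence criterion. The paper does exactly this, the only addition being that it exhibits the explicit Gosper certificates $g_1,\ldots,g_7$ (found via {\sc Maple}'s {\tt sumtools}) so that all claimed Gosper-summabilities are verified by direct inspection rather than left as a computation to be carried out.
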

\noindent {\em Proof of Proposition~{\em\ref{prop:gouyou}} and
  Corollary~{\em\ref{coro:gouyou}}.}
The orbit of $(x,y)$ under the action of $G$, of cardinality 8, is shown in
Table~\ref{tab:classesD4}. 
The expression of $Q(x,y;t)$ given in Proposition~\ref{prop:gouyou}
is a direct application of 
  Proposition~\ref{prop:orbit-sol}, with  $\tilde R(x,y)=R(x,y)/(xy)$.
It is then an easy task to extract the coefficient of $x^iy^j t^n$ in
$\tilde R(x,y;t)$, using
$$
 x+\bx+ \bx y+x\by=(1+\by)(x+\bx y) \quad \hbox{and}  \quad
 [x^i y^j] (x+\bx+ \bx y+x\by)^n= {n\choose m+i}{n\choose m-j}
$$
for $n=2m+i$. 
This proves Proposition~\ref{prop:gouyou}.

For the first part of corollary, that is, the expressions of
$q(-,j;n)$ and $q(i, - \, ; n)$, it suffices to check the following identities,
which we have obtained using the implementation of Gosper's algorithm
found in the {\tt sumtools} package of {\sc Maple}: 

\begin{eqnarray*}
  q(2i,j;2m)&=&g_1(i,j;m)-g_1(i+1,j;m),\\
 q(2i+1,j;2m+1)&=&g_2(i,j;m)-g_2(i+1,j;m),\\
 q(i,j;2m+i)&=&g_3(i,j;m)-g_3(i,j+1;m),
\end{eqnarray*}
with
\begin{eqnarray*}
g_1(i,j;m)&=&{\frac {2 \left( 1+j \right)  \left( m+1+2\,i \left( i+1+j \right) 
 \right)  \left( 2\,m \right) !\, \left( 2\,m+1 \right) !}{ \left( m-i
-j \right) !\, \left( m-i+1 \right) !\, \left( m+i+1 \right) !\,
 \left( m+i+j+2 \right) !}},\\
g_2(i,j;m)&=&{\frac {2 \left( 1+j \right)  ( m+1+j \left( 1+2\,i \right) +2
\, \left( 1+i \right) ^{2} )  \left( 2\,m+1 \right) !\, \left( 2
\,m+2 \right) !}{  \left( m-i-j \right) !\, \left( m-i+1 \right) !\left( m+i+2 \right) !\, \left( m+i+j+3 \right) !\,
}},\\
g_3(i,j;m)&=&{\frac { \left( 1+i \right)  \left( m+1+ \left( i+j +1\right)  \left( 
1+j \right)  \right)  \left( 2\,m+i \right) !\, \left( 2\,m+i+2
 \right) !}{ \left( m-j \right) !\, \left( m+1 \right) !\, \left( m+i+
2 \right) !\, \left( m+i+j+2 \right) !}}.
\end{eqnarray*}
These three identities respectively lead to 
$$
q(-,j;2m)=g_1(0,j;m), \quad 
q(-,j;2m+1)=g_2(0,j;m)\quad  \hbox{and} \quad 
q(i,-\, ; 2m+i)=g_3(i,0;m)
$$ 
as stated  in the corollary.

For the second part, we have used the following identities, also
discovered (and proved) using {\sc Maple}:
\begin{eqnarray*}
q(2i,-\,;2m)&=&  g_4(i;m)-g_4(i+1;m),\\
q(2i+1,-\,;2m+1)&=&  g_5(i;m)-g_5(i+1;m),\\
q(-,j;2m) &=&  g_6(j;m)-g_6(j+1;m),\\
q(-,j;2m+1) &=&  g_7(j;m)-g_7(j+1;m),\\
\end{eqnarray*}
with
\begin{eqnarray*}
g_4(i;m)&=&{\frac { \left( 2\,m \right) !\, \left( 2\,m+1 \right) !}{ \left( m-i
 \right) !\, \left( m-i+1 \right) !\, \left( m+i \right) !\, \left( m+
i+1 \right) !}},\\
g_5(i;m)&=& {\frac { \left( 2\,m+1 \right) !\, \left( 2\,m+2 \right) !}{ \left( m-
i \right) !\, \left( m-i+1 \right) !\, \left( m+i+1 \right) !\,
 \left( m+i+2 \right) !}},\\
g_6(j;m)&=&{\frac { \left( 2\,m \right) !\, \left( 2\,m+1 \right) !}{ \left( m-j
 \right) !\,m!\, \left( m+1 \right) !\, \left( m+j+1 \right) !}},\\
g_7(j;m)&=&
 \frac { \left( 2\,m+1 \right) !\, \left( 2\,m+2 \right) !}{ \left( m-j \right) !\, \left( m+1 \right) !\, \left( m+2 \right) !\, \left( m+j
+1 \right) !}.
  \end{eqnarray*}
Note that these identities give  two ways to determine the total
number of $n$-step walks in the quadrant, as
$$
q(-,-\,;2m)= g_4(0;m)=g_6(0;m) \quad \hbox{and} \quad
q(-,-\,;2m+1)= g_5(0;m)=g_7(0;m).
$$
\qed

\section{Algebraic solutions via half-orbit  sums}\label{sec:half-orbit}
In this section we solve in a unified manner the three models whose orbit
has an $x/y$ symmetry: $\cS_1=\{\bx,\by,xy\}$,
$\cS_2=\{x,y,\bx\by\}$ and $\cS=\cS_1\cup \cS_2$. 
Remarkably, in all
three cases the \gf\ $Q(x,y;t)$ is found to be algebraic. 
Our approach uses  the \emm algebraic kernel method, introduced by the
first author to solve the case $\cS=\cS_1$, that is, Kreweras'
model~\cite[Section~2.3]{Bous05}. 
We refer to the introduction for more references on this model.
The case $\cS=\cS_2$ was solved by the second author in~\cite{Mishna-jcta},
and the case $\cS=\cS_1\cup \cS_2$ is, to our knowledge, new.

\medskip
 Recall from Proposition~\ref{prop:half-orbit}
that for each of these three models,
  $$
xyQ(x,y)-\bx Q(\bx\by,y)+\by Q(\bx\by,x)= \frac 
{xy-\bx+\by- 2txA_{-1}(x)Q(x,0)+t\epsilon Q(0,0)} {K(x,y)}.
$$
Extract from this equation the coefficient of $y^0$: in the left-hand
side, only the second term contributes, and its contribution is
$\bx Q_d(\bx)$, where $Q_d(x)\equiv Q_d(x;t)$ is the \gf\ of walks
ending on the diagonal:
$$
Q_d(x;t)=\sum_{n, i\ge 0} t^n x^i q(i,i;n).
$$
The coefficient of $y^0$ in the right-hand side can be easily
extracted using~\eqref{1/K-exp}. This gives
$$
-\bx Q_d(\bx)= \frac 1 {\sqrt{\Delta(x)}} \left(
xY_0(x) -\bx + \frac 1 {Y_1(x)} - 2txA_{-1}(x)Q(x,0)+t\epsilon Q(0,0)
\right),
$$
or, given the expression~\eqref{Y-sol} of $Y_0$ and the fact that $Y_0Y_1=\bx$,
$$
\frac x{tA_1(x)} -\bx Q_d(\bx)= \frac 1 {\sqrt{\Delta(x)}} \left(
\frac{x(1-tA_0(x))}{tA_1(x)} -\bx  - 2txA_{-1}(x)Q(x,0)+t\epsilon Q(0,0)
\right).
$$
Let us write the canonical factorization of $\Delta(x)=\Delta_0
\Delta_+(x) \Delta_-(\bx)$ (see
Section~\ref{sec:canonical}). Multiplying the equation by $A_1(x)
\sqrt{\Delta_-(\bx)}$ gives 
\begin{multline}\label{ht}
\sqrt{\Delta_-(\bx)}\left(
\frac x{t} -\bx A_1(x) Q_d(\bx)\right)=\\
\frac 1 {\sqrt{\Delta_0\Delta_+(x)}} \left(
\frac{x(1-tA_0(x))}{t} -\bx A_1(x)  - 2txA_{-1}(x)A_1(x)Q(x,0)+t\epsilon A_1(x)Q(0,0)
\right).
\end{multline}
Each term in this equation is a 
Laurent series in $t$ with coefficients
in $\qs[x,\bx]$.  Moreover, very few positive powers of $x$ occur in
the left-hand side, while very few negative powers in $x$ occur in the
right-hand side. We will extract from this equation the positive and
negative parts in $x$, and this will give algebraic expressions for the unknown
series $Q_d(x)$ and $Q(x,0)$. 
From now on, we consider each model separately. 

\subsection{The case $\cS=\{\bx,\by,xy\}$}

We have
$A_{-1}(x)=1$, $A_{0}(x)=\bx$, $A_{1}(x)=x$ and $\epsilon =0$. The
discriminant $\Delta(x)$ reads $(1-t\bx)^2-4t^2x$. The curve
$\Delta(x;t)=0$ has a rational parametrization in terms of the series
$W\equiv W(t)$, defined as  the only power series in $t$ satisfying
\beq\label{W-alg}
W=t(2+W^3).
\eeq
Replacing $t$ by $W/(2+W^3)$ in  $\Delta(x)$ gives  the canonical
factorization as
$
\Delta(x)=\Delta_0 \Delta_+(x) \Delta_-(\bx)$
with
$$
\Delta_0=\frac{4t^2}{W^2}, \quad \Delta_+(x)=1-xW^2, \quad
\Delta_-(\bx)= 1-\frac{ W (W^3+4)}{4x}+ \frac{W^2}{4x^2}.
$$
Extracting the positive part in $x$ from~\eqref{ht} gives
$$
{\frac {x}{t}}=
-{\frac { \left( 2\,{t}^{2}{x}^{2}Q \left( x,0
 \right) -x+2\,t \right) W}{2{t}^{2}\sqrt {1-x{W}^{2}}}}+{\frac {W}{t}},
$$
from which we obtain an expression of $Q(x,0)$ in terms of $W$.
Extracting the non-positive part in $x$ from~\eqref{ht} gives
$$
\sqrt {1-{\frac {W \left( {W}^{3}+4 \right) }{4x}}+{\frac {{W
}^{2}}{4{x}^{2}}}}\  \left( {\frac {x}{t}-Q_d \left( \bx \right) } \right) 
-{\frac {x}{t}}=-{\frac {W}{t}},
$$
from which we obtain an expression of $Q_d(\bx)$. We recover the
results of~\cite{Bous05}.
\begin{Proposition}
\label{prop:kreweras}
Let $W\equiv W(t)$ be the power series in $t$ defined by~\eqref{W-alg}.
Then the \gf \ of quarter plane walks formed of \WW, \SS\ and \NE\ steps,
and ending on the $x$-axis is 
$$
Q(x,0;t)= \frac 1 {tx} \left( \frac 1 {2t} - \frac 1 x - 
\left( \frac 1 W -\frac 1 x \right) \sqrt{1-xW^2} \right).$$
Consequently, the length \gf \ of walks ending at $(i,0)$ is
$$[x^i] Q(x,0;t) = \frac{W^{2i+1}}{2.4^i\ t}\left( C_i
-\frac{C_{i+1}W^3}4\right),$$ 
where $C_i={{2i} \choose i}/(i+1)$ is the $i$-th Catalan number.
The Lagrange inversion formula gives the number of such walks of 
length $m=3m+2i$ as
$$
q(i,0;3m+2i)=\frac{4 ^m (2i+1)}{(m+i+1)(2m+2i+1)} {{2i} \choose i}
{{3m+2i} \choose m} .
$$
 The \gf \ of  walks ending on the diagonal is
$$
 Q_d(x;t)=\frac {W-\bx}{\displaystyle t \sqrt{1-xW(1+W^3/4)+x^2W^2/4}}+\bx/t.
$$
\end{Proposition}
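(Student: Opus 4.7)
The plan is to specialize the general identity~\eqref{ht} to this model, substitute in the canonical factorization via $W$, and then exploit the very limited range of powers of $x$ that each side contains in order to separate the two unknowns $Q(x,0;t)$ and $Q_d(\bx;t)$ by coefficient extraction.

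First I would substitute $A_{-1}(x)=1$, $A_0(x)=\bx$, $A_1(x)=x$, $\epsilon=0$ into~\eqref{ht}. Using the factorization $\Delta(x)=\Delta_0\Delta_+(x)\Delta_-(\bx)$ with $\Delta_+(x)=1-xW^2$ and $\Delta_-(\bx)=1-W(W^3+4)/(4x)+W^2/(4x^2)$ (and $\sqrt{\Delta_0}=2t/W$), one checks, using $W=t(2+W^3)$, that $\sqrt{\Delta_+(x)}$ lies in $1+x\,\qs[[x,t]]$ and $\sqrt{\Delta_-(\bx)}$ lies in $1+\bx\,\qs[[\bx,t]]$. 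Hence on the left-hand side of~\eqref{ht} the only term contributing a strictly positive power of $x$ is $x/t$, while on the right-hand side the only term containing a strictly negative power of $x$ is $-\bx A_1(x)=-1$. This is what makes the splitting work.

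Next I would extract, term by term, the part of the equation in $x\qs[[x]]$ and the part in $\qs[\bx][[t]]$. The positive-part equation gives a single relation between $Q(x,0;t)$ and known quantities involving $W$, which when solved yields
\[
Q(x,0;t)=\frac{1}{tx}\left(\frac{1}{2t}-\frac{1}{x}-\Bigl(\frac{1}{W}-\frac{1}{x}\Bigr)\sqrt{1-xW^2}\right),
\]
exactly as claimed. The non-positive-part equation, solved similarly, gives the stated expression for $Q_d(\bx;t)$, and in particular its algebraicity. At this stage one should also verify that the constant term (in $x$) is consistent in both extractions, which amounts to a small sanity check on the square-root expansions.

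Finally, to obtain $[x^i]Q(x,0;t)$ I would expand $\sqrt{1-xW^2}=\sum_{i\ge 0}\alpha_i(xW^2)^i$ using the standard identity $\alpha_i=-C_{i-1}/(2\cdot 4^{i-1})$ for $i\ge 1$ (where $C_i=\binom{2i}{i}/(i+1)$), collect the coefficient of $x^i$ in the displayed formula, and simplify to $\frac{W^{2i+1}}{2\cdot 4^i\,t}\bigl(C_i-C_{i+1}W^3/4\bigr)$. To pass from this to an explicit formula for $q(i,0;3m+2i)$, I would apply Lagrange inversion to~\eqref{W-alg}, which yields $[t^n]W^k=\frac{k}{n}[w^{n-k}](2+w^3)^n$, nonzero only when $n-k$ is a multiple of $3$. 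Plugging in $k=2i+1$ and $k=2i+4$ and combining the two contributions gives, after routine simplification of binomial coefficients, the stated formula for $q(i,0;3m+2i)$.

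The main obstacle I expect is keeping the sign and normalization bookkeeping consistent in the coefficient extractions, in particular tracking the branch of each square root and making sure the ``positive part'' and ``non-positive part'' truly cover the full equation without overlap; everything else is essentially forced by~\eqref{ht} and by the Lagrange inversion applied to~\eqref{W-alg}.
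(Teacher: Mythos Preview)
Your proposal is correct and follows essentially the same route as the paper: specialize~\eqref{ht} to this model, use the canonical factorization expressed via $W$, and split the identity by extracting the positive and non-positive parts in $x$, which isolates $Q(x,0)$ and $Q_d(\bx)$ respectively. One tiny slip: when you write that ``the only term containing a strictly negative power of $x$ is $-\bx A_1(x)=-1$'', note that $-\bx A_1(x)=-1$ is actually a constant, so the right-hand side in fact contains \emph{no} negative power of $x$; this only strengthens your splitting argument.
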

Note that $Q(0,0)$ is algebraic of degree 3, $Q(x,0)$ is algebraic of
degree 6, and $Q(x,y)$ (which can be expressed in terms of $Q(x,0)$,
and $Q(0,y)=Q(y,0)$ using the functional equation we started
from) is algebraic of degree 12.

\subsection{The case $\cS=\{x,y,\bx \by\}$}
The steps of this
model are obtained by reversing the steps of the former model. In
particular, the series $Q(0,0)$ counting walks that start and end at
the origin is the same in both models. This observation was used 
in~\cite{Mishna-jcta} to solve the latter case. We present here a
self-contained solution.

We have
$A_{-1}(x)=\bx$, $A_{0}(x)=x$, $A_{1}(x)=1$ and $\epsilon =1$. The
discriminant $\Delta(x)$ is now $(1-tx)^2-4t^2\bx$, and is obtained by
replacing $x$ by $\bx$ in the discriminant of the previous model. In
particular, the canonical factors of $\Delta(x)$ are
$$
\Delta_0=\frac{4t^2}{W^2}, \quad
\Delta_+(x)= 1-\frac{ W (W^3+4)}{4}\, x+ \frac{W^2}{4}\, x^2, \quad
\Delta_-(\bx)=1-\bx W^2,
$$
where $W\equiv W(t)$ is the  power series in $t$ defined by~\eqref{W-alg}.
Extracting the coefficient of $x^0$ in~\eqref{ht} gives
$$
-{\frac {{W}^{2}}{2t}}=-{\frac {W \left( {W}^{4}+4\,W+8t\,Q (0, 0)  \right) }{16t}}
$$
from which we obtain an expression of $Q(0,0)$.
Extracting the non-negative part in $x$ from~\eqref{ht} gives
$$
{\frac {x}{t}}-{\frac {{W}^{2}}{2t}}=
-{\frac { \left(
 2x{t}^{2} Q( x,0 ) -x{t}^{2} Q (0,0) +t-{x}^{2}+{x}^{3}t
 \right) W}
{2x{t}^{2} \sqrt {1-xW({W}^{3}+4)/4+{x}^{2}{W}^{2}/4}}}+{
\frac {W}{2xt}}
$$
from which we obtain an expression of $Q(x,0)$.
Finally, extracting the negative part in $x$ from~\eqref{ht} gives
$$
\left({\frac {x}{t}}- {\frac {Q_d \left(\bx \right) }{x}}
 \right) \sqrt {1-{\frac {{W}^{2}}{x}}}-{\frac {x}{t}}
+{\frac {{W}^{2}}{2t}}=-{\frac  {W}{2xt}},
$$
from which we obtain an expression of $Q_d(\bx)$. We have thus
recovered, and completed, the results of~\cite{Mishna-jcta}. 
Note in particular how simple the number of walks of length $n$ ending
at a diagonal point $(i,i)$ is. 
\begin{Proposition} 
\label{prop:reverse-kreweras}
Let $W\equiv W(t)$ be the power series in $t$ defined by~\eqref{W-alg}.
Then the \gf \ of quarter plane  walks formed of  \NN, \EE\ and \SW\ steps
and ending on the $x$-axis is
$$
Q(x,0;t)= 
{\frac {W \left(4- {W}^{3} \right) }{16t}}
-{\frac {t-{x}^{2}+t{x}^{3}}{2x{t}^{2}}}
-{\frac { \left( 2\,{x}^{2}-x{W}^{2}-W \right) \sqrt {1- xW (W^3+4)/4\,
      +  x^2{W^2}/4}}{2txW}}.
$$
 The \gf \ of  walks ending on the diagonal is
$$
 Q_d(x;t)=\frac{xW(x+W)-2}{2tx^2\sqrt{1-xW^2}}+\frac 1{tx^2}.
$$
Consequently, the length \gf \ of walks ending at $(i,i)$ is
$$[x^i] Q_d(x;t) = \frac{W^{2i+1}}{4^{i+1}t\,(i+2)}
{2i\choose i} \left(2i+4-(2i+1)W^3\right).
$$ 
The Lagrange inversion formula gives the number of such walks of 
length $n=3m+2i$ as
$$q(i,i;3m+2i)=\frac{4 ^m (i+1) ^2}{(m+i+1)(2m+2i+1)} {{2i+1} \choose i}
{{3m+2i} \choose m} .$$
\end{Proposition}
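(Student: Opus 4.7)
The plan is to apply the algebraic kernel method developed at the beginning of Section~\ref{sec:half-orbit} to the step set $\cS=\{x,y,\bx\by\}$, in direct analogy with the treatment of Kreweras' walks in Proposition~\ref{prop:kreweras}, and then to apply Lagrange inversion to derive the closed-form enumeration.

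First I would specialise equation~\eqref{ht} to this model. Reading off the coefficients one has $A_{-1}(x)=\bx$, $A_0(x)=x$, $A_1(x)=1$ and $\epsilon=1$, so the discriminant is $\Delta(x)=(1-tx)^2-4t^2\bx$. Observing that this is obtained from the Kreweras discriminant by interchanging $x$ and $\bx$, one obtains the canonical factorisation simply by swapping the roles of the two non-constant factors: with the series $W\equiv W(t)$ defined by $W=t(2+W^3)$, one has $\Delta_0=4t^2/W^2$, $\Delta_+(x)=1-xW(W^3+4)/4+x^2W^2/4$, and $\Delta_-(\bx)=1-\bx W^2$.

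The core of the argument is to split equation~\eqref{ht}, viewed as an identity between Laurent series in $x$ with coefficients in $\qs[[t]]$, according to the sign of the exponent of $x$. On the left-hand side, the factor $\sqrt{\Delta_-(\bx)}$ is a series in $\bx$ with constant term $1$, so the product $\sqrt{\Delta_-(\bx)}\cdot x/t$ contributes the single positive power $x/t$ plus a tail in $\bx$, while $-\bx Q_d(\bx)\sqrt{\Delta_-(\bx)}$ is itself a series in $\bx$. Dually, on the right-hand side, the factor $1/\sqrt{\Delta_0\Delta_+(x)}$ is a power series in $x$ with constant term $W/(2t)$, multiplying a Laurent polynomial in $x$ whose only negative power is $-\bx$. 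Extracting the coefficient of $x^0$ then yields a scalar identity that determines $Q(0,0)=W(4-W^3)/(8t)$; extracting the non-negative part in $x$ produces the stated closed form for $Q(x,0)$; extracting the strictly negative part in $x$ produces the stated closed form for $Q_d(\bx)$.

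The last step is to read off $[x^i]Q_d(x;t)$ and $q(i,i;3m+2i)$. The series $1/\sqrt{1-xW^2}$ expands as $\sum_{i\ge 0}\binom{2i}{i}(xW^2/4)^i$, so multiplying by the polynomial prefactor $(xW(x+W)-2)/(2tx^2)$ gives a closed Laurent expression in $x$ and $W$ from which the coefficient of $x^i$ reads off as stated. Passing from a series in $W$ to a series in $t$ uses the Lagrange inversion formula applied to $W=t(2+W^3)$, namely $[t^n]W^k=(k/n)[W^{n-k}](2+W^3)^n$; only binomial monomials appear, which yields the closed form for $q(i,i;3m+2i)$. The main obstacle is bookkeeping rather than conceptual: the coefficient extraction at $x^0$ requires the first two Taylor coefficients of $1/\sqrt{\Delta_+(x)}$, after which everything reduces to routine algebraic simplification and a single application of Lagrange inversion.
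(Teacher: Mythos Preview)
Your proposal is correct and follows essentially the same route as the paper: identify $A_{-1}(x)=\bx$, $A_0(x)=x$, $A_1(x)=1$, $\epsilon=1$; note that $\Delta(x)$ is the Kreweras discriminant with $x\leftrightarrow\bx$ so the canonical factors swap; then split~\eqref{ht} into its $x^0$, non-negative, and negative parts to obtain $Q(0,0)$, $Q(x,0)$, and $Q_d(\bx)$ respectively, before expanding $Q_d$ and applying Lagrange inversion. Your remark that the $x^0$-extraction needs the first two Taylor coefficients of $1/\sqrt{\Delta_+(x)}$ is exactly right, and your treatment of the Lagrange step is slightly more explicit than the paper's.
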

Note that $Q(0,0)$ is algebraic of degree 3, $Q(x,0)$ is algebraic of
degree 6, and $Q(x,y)$ (which can be expressed in terms of $Q(x,0)$,
$Q(0,y)=Q(y,0)$ and $Q(0,0)$ using the functional equation we started
from) is algebraic of degree 12.

\subsection{The case $\cS=\{x,y,\bx,\by,xy,\bx \by\}$}
We have
$A_{-1}(x)=1+\bx$, $A_{0}(x)=x+\bx$, $A_{1}(x)=1+x$ and $\epsilon =1$. The
discriminant $\Delta(x)$ is now $(1-t(x+\bx))^2-4t^2(1+x)(1+\bx)$, and
is symmetric in $x$ and $\bx$. Two of its roots, say $X_1$ and
$X_2$, have valuation 1 in $t$, and the other two roots are $1/X_1$ and
$1/X_2$. By studying the two elementary symmetric functions of $X_1$
and $X_2$ (which are the coefficients of $\Delta_-(\bx)$), we
are led to introduce the power series $Z\equiv Z(t)$, satisfying
\beq\label{Z-alg}
Z=t\, \frac{1-2\,Z+6\,{Z}^{2}-2\,{Z}^{3}+{Z}^{4}}
{(1-Z)^2}
\eeq
and having no constant term.
Replacing $t$ by its expression in terms of $Z$ in $\Delta(x)$
provides  the canonical factors of $\Delta(x)$ as
$$
\Delta_0=\frac{t^2}{Z^2}, \quad
\Delta_+(x)= 1-2\,Z \frac{ 1+{Z}^{2} }{( 1-Z ) ^{2}}\, x+{Z}^{2}{x}^{2},
\quad \Delta_-(\bx)=\Delta_+(\bx).
$$
As in the previous case, extracting from~\eqref{ht} the coefficient of
$x ^0$ gives and expression of $Q(0,0)$:
$$
Q(0,0)={\frac {Z(1-2\,Z  -{Z}^{2}) }{t (1- Z) ^{2}}}.
$$
Extracting then the positive and negative parts of~\eqref{ht} in $x$
gives expressions of $Q(x,0)$ and $Q_d(\bx)$.
\begin{Proposition} 
\label{prop:double-kreweras}
Let $Z\equiv Z(t)$ be the power series with no constant term
satisfying~\eqref{Z-alg}, and denote
$$
\Delta_+(x)= 1-2\,Z \frac{ 1+{Z}^{2} }{( 1-Z ) ^{2}}\, x+{Z}^{2}{x}^{2}.
$$
Then the \gf \ of quarter plane walks formed of \NN, \SS, \EE, \WW,
\SE\ and \NW\ steps, and ending on the $x$-axis is
\begin{multline*}  
Q(x,0;t)= 
\frac
{
\left(Z  ( 1-Z )+2 xZ  -( 1-Z)\, {x}^{2}\right)
 \sqrt {\Delta_+(x)}
}{2tx Z (1- Z )  ( 1+x ) ^{2}}\\
-\frac{Z  (1- Z) ^{2}
+Z  \left( {Z}^{3}+4 {Z}^{2}-5 Z+2 \right) x
- \left(1 -2 Z+7 {Z}^{2}-4 {Z}^{3} \right) {x}^{2}
+{x}^{3}Z  (1- Z )^{2}
}
{2tx Z (1- Z ) ^{2} ( 1+x ) ^{2}}.
\end{multline*}
 The \gf \ of  walks ending on the diagonal is
$$
 Q_d(x;t)=\frac{1-Z -2xZ+x^2Z(Z-1)}{tx(1+x)(Z-1)\sqrt{\Delta_+(x)}}
+\frac{1}{tx(1+x)} 
$$
\end{Proposition}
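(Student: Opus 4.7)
The plan is to follow the template already set up in Propositions~\ref{prop:kreweras} and~\ref{prop:reverse-kreweras}, specialized to the six-step model with
$A_{-1}(x)=1+\bx$, $A_0(x)=x+\bx$, $A_1(x)=1+x$, and $\epsilon=1$. The starting point is the master identity~\eqref{ht} provided by Proposition~\ref{prop:half-orbit}, which here involves the three unknown one-variable series $Q(x,0;t)$, $Q_d(\bx;t)$, and $Q(0,0;t)$.

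First, I would establish the canonical factorization of the discriminant $\Delta(x)=(1-t(x+\bx))^2-4t^2(1+x)(1+\bx)$. Because $\Delta$ is invariant under $x\leftrightarrow \bx$, its four roots pair up reciprocally as $(X_1,1/X_1)$ and $(X_2,1/X_2)$, with $X_1,X_2$ of positive valuation in $t$. Hence one automatically has $\Delta_-(\bx)=\Delta_+(\bx)$. The elementary symmetric functions $X_1+X_2$ and $X_1X_2$ are power series in $t$, and one checks that both become rational functions of the auxiliary series $Z\equiv Z(t)$ defined by~\eqref{Z-alg} when $t$ is eliminated via that defining equation. Substituting the resulting rational parametrization of $t$ in terms of $Z$ into $\Delta(x)$ yields the claimed factors $\Delta_0=t^2/Z^2$ and $\Delta_+(x)=1-2Z(1+Z^2)x/(1-Z)^2+Z^2x^2$.

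Next, with the canonical factorization in hand, I would apply the three coefficient extractions to~\eqref{ht} exactly as in the preceding two subsections. Extracting the coefficient of $x^0$ annihilates $Q(x,0)$ on the right (which contributes only to $x^{>0}$ after multiplication by $xA_1(x)A_{-1}(x)$) and $Q_d(\bx)$ on the left (strictly negative powers of $x$), leaving a scalar equation from which the closed form
\[
Q(0,0;t)=\frac{Z(1-2Z-Z^2)}{t(1-Z)^2}
\]
drops out directly. Extracting the strictly positive part in $x$ isolates $Q(x,0)$ on the right; using that $\sqrt{\Delta_+(x)}$ has an explicit rational expression in $Z$ and $x$, and that the left-hand side contributes only the terms coming from $x/t$ and from the expansion of $\sqrt{\Delta_-(\bx)}$ multiplied by $x/t$, one solves algebraically for $Q(x,0;t)$. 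Finally, extracting the strictly negative part in $x$ produces $Q_d(\bx;t)$ by an identical mechanism; the awkward singularity at $1+x=0$ in the denominators cancels once one recognizes that $A_1(x)=1+x$ divides the combinatorial numerators.

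The main obstacle is the canonical factorization step: guessing the right auxiliary series $Z$, verifying that the $t\mapsto Z$ substitution is legitimate as an operation on formal power series (i.e.\ that~\eqref{Z-alg} admits a unique solution $Z\in t\qs[[t]]$), and confirming that $\Delta_+(x)$ and $\Delta_-(\bx)$ as defined are the genuine ``small-root'' and ``large-root'' factors rather than some other grouping of the four roots. Once this is settled, the remaining work is the routine but lengthy coefficient bookkeeping, which is essentially parallel to the Kreweras computation and can be carried out with computer algebra; the final cosmetic simplification of the rational fractions into the polished forms displayed in the proposition is a matter of combining over a common denominator and factoring.
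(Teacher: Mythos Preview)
Your approach is exactly the one the paper takes: specialize~\eqref{ht} with $A_{-1}(x)=1+\bx$, $A_0(x)=x+\bx$, $A_1(x)=1+x$, $\epsilon=1$; obtain the canonical factorization of the $x\leftrightarrow\bx$--symmetric discriminant via the auxiliary series $Z$ of~\eqref{Z-alg}; then read off $Q(0,0)$, $Q(x,0)$ and $Q_d(\bx)$ from the $x^0$, $x^{>0}$ and $x^{<0}$ parts respectively.

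One small bookkeeping point worth correcting: in this model $xA_{-1}(x)A_1(x)=(1+x)^2$ and $\bx A_1(x)=1+\bx$, so the $Q(x,0)$ term on the right of~\eqref{ht} and the $Q_d(\bx)$ term on the left do \emph{not} contribute only to $x^{>0}$ and $x^{<0}$ as you assert; each leaks a $Q(0,0)$ into the constant term. This does not spoil the argument---the coefficient of $x^0$ is still a scalar equation in the single unknown $Q(0,0)$---but the mechanism is slightly different from what you wrote.
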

Note that $Q(0,0)$ is algebraic of degree 4, $Q(x,0)$ is algebraic of
degree 8, and $Q(x,y)$ (which can be expressed in terms of $Q(x,0)$,
$Q(0,y)=Q(y,0)$ and $Q(0,0)$ using the functional equation we started
from) is algebraic of degree 16. However, $Q\equiv Q(1,1)$ has degree 4 only,
and the algebraic equation it satisfies has a remarkable form:
$$
Q \left( 1+tQ \right)  \left(1+2t\,Q+ 2\,{t}^{2}{Q}^{2} \right)=\frac
1{1-6t}.
$$
Also, Motzkin numbers seem to be lurking around, as in
Proposition~\ref{prop:double-tandem}.
\begin{Corollary}
  Let $N\equiv N(t)$ be the only power series in $t$ satisfying
$$
N=t(1+2N+4N^2).
$$
Up to a factor $t$, this series is the \gf\ of the numbers  $2 ^n M_n$, where
$M_n$ is the $n^{\hbox{th}}$ Motzkin number:
$$
N=\sum_{n\ge 0} t^{n+1} \sum_{k=0}^{\lfloor n/2\rfloor} \frac {2^n}{k+1}{n\choose{2k}}{{2k}\choose k}.
$$
Then the \gf\ of all walks in the quadrant with steps N, S, E, W, SE
and NW is
$$
Q(1,1;t)= \frac 1{2t} \left( \sqrt{\frac{1+2N}{1-2N}}-1\right),
$$
and the \gf\ of walks in the quadrant ending at the origin is
$$
Q(0,0;t)= \frac{(1+4N)^{3/2}}{2Nt} -\frac 1 {2t^2}-\frac 2 t.
$$
\end{Corollary}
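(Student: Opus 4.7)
The plan is to reduce the corollary to Proposition~\ref{prop:double-kreweras} by finding a simple substitution between the series $N$ of the corollary and the series $Z$ appearing in that proposition. Once this link is in place, the two advertised formulas for $Q(0,0;t)$ and $Q(1,1;t)$ will fall out by direct algebraic manipulation.

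I would first identify $N$ concretely. The ordinary Motzkin \gf\ $M(t)=\sum_{n\ge 0}M_nt^n$ satisfies $M=1+tM+t^2M^2$; replacing $t$ by $2t$ shows that $N(t):=tM(2t)$ obeys $N=t(1+2N+4N^2)$, and since this equation has a unique formal power series solution with no constant term, it must be $N$. The closed-form sum is then the classical identity $M_n=\sum_{k}\frac{1}{k+1}\binom{n}{2k}\binom{2k}{k}$.

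The heart of the argument is the identity $N=Z/(1-Z)^2$. To verify it, one computes
$$1+2N+4N^2=\frac{(1-Z)^4+2Z(1-Z)^2+4Z^2}{(1-Z)^4}=\frac{1-2Z+6Z^2-2Z^3+Z^4}{(1-Z)^4},$$
so that the defining relation $Z(1-Z)^2=t\,(1-2Z+6Z^2-2Z^3+Z^4)$ for $Z$ translates exactly into $N=t(1+2N+4N^2)$. Three useful consequences, obtained by trivial algebra, are $1+4N=(1+Z)^2/(1-Z)^2$, $1+2N=(1+Z^2)/(1-Z)^2$, and $1-2N=(1-4Z+Z^2)/(1-Z)^2$.

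Part (3) of the corollary then follows by substituting the first of these identities, together with $1/t=(1-2Z+6Z^2-2Z^3+Z^4)/(Z(1-Z)^2)$, into the expression $Q(0,0;t)=Z(1-2Z-Z^2)/(t(1-Z)^2)$ from Proposition~\ref{prop:double-kreweras}, and collecting over a common denominator. For part (2), I would specialize the functional equation of Lemma~\ref{lem:eq-func} at $(x,y)=(1,1)$: using $A_{-1}(1)=B_{-1}(1)=2$, $\epsilon=1$, and the symmetry $Q(1,0;t)=Q(0,1;t)$, this yields $(1-6t)Q(1,1;t)=1-4tQ(1,0;t)+tQ(0,0;t)$. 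I would then evaluate $Q(1,0;t)$ from Proposition~\ref{prop:double-kreweras} at $x=1$, with $\Delta_+(1)=(1+Z^2)(1-4Z+Z^2)/(1-Z)^2$. The main obstacle, and the pleasant surprise, is a double appearance of the polynomial $(1-4Z+Z^2)^2$: a careful expansion collapses the rational part of $1-4tQ(1,0;t)+tQ(0,0;t)$ to $-(1-4Z+Z^2)^2/(2Z(1-Z)^2)$, while the polynomial identity $(1-2Z+6Z^2-2Z^3+Z^4)-6Z(1-Z)^2=(1-4Z+Z^2)^2$ gives $1-6t=(1-4Z+Z^2)^2/(1-2Z+6Z^2-2Z^3+Z^4)$, so that one factor of $(1-4Z+Z^2)$ cancels. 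What remains simplifies directly to $\frac{1}{2t}\bigl(\sqrt{(1+Z^2)/(1-4Z+Z^2)}-1\bigr)=\frac{1}{2t}\bigl(\sqrt{(1+2N)/(1-2N)}-1\bigr)$, which is the claimed formula.
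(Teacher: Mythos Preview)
Your proof is correct and is exactly the kind of ``elementary'' argument the paper has in mind: the paper says only that ``the proof is elementary once the algebraic equations satisfied by $Q(1,1)$ and $Q(0,0)$ are obtained,'' and you have supplied the details, the key one being the clean substitution $N=Z/(1-Z)^2$ linking the parametrization of the corollary to that of Proposition~\ref{prop:double-kreweras}. Your verification of the polynomial identities (in particular the collapse to $-(1-4Z+Z^2)^2$ and the factorization of $1-6t$) is accurate.
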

The proof is elementary once the algebraic equations satisfied by 
$Q(1,1)$ and $Q(0,0)$ are obtained.

\section{Final comments and questions}\label{sec:questions}

The above results raise, in our opinion, numerous natural
questions. Here are some of them.
The first two families of questions are of a purely combinatorial, or
even bijective, nature (``explain why some results are so simple''). 
Others are more closely related to the method used in this paper.
We also raise a question of an algorithmic nature.
\subsection {Explain closed form  expressions}
We have  obtained remarkable hypergeometric   expressions for the number of
  walks in many cases (Propositions~\ref{prop:tandem}
  to~\ref{prop:reverse-kreweras}). Are there direct combinatorial
  explanations?
Let us underline a few examples that we consider worth investigating.
\begin{enumerate}
\item[---] {\bf Kreweras' walks and their reverse:}
The number of Kreweras' walks ending at $(i,0)$ is remarkably simple
(Proposition~\ref{prop:kreweras}). A 
combinatorial explanation has been found when $i=0$, in connection
with the enumeration of planar   triangulations~\cite{Bern07}. To our
knowledge, the generic case remains open. If we consider instead the
reverse collection of steps (Proposition~\ref{prop:reverse-kreweras}),
then it is the number of walks ending at $(i,i)$ that is remarkably
simple. This is a new result, which we would like to see explained in
a more combinatorial manner.
\item[---] {\bf Motzkin numbers:} this famous sequence of numbers
  arises in the solution of the cases $\cS=\{\bx, y, x\by\}$ and
  $\cS=\{x,\bx, y,\by,  x\by , \bx y\}$ (Propositions~\ref{prop:tandem}
    and~\ref{prop:double-tandem}). The first problem is 
    equivalent to the enumeration of 
 involutions with no decreasing subsequence of length 4, and
the occurrence of Motzkin numbers follows from restricting a bijection
of Fran\c con and Viennot~\cite{francon-viennot}. 
The solution to the second problem is, to our
    knowledge, new, and deserves a more combinatorial
    solution. 
Can one find a direct explanation for why the
respective counting sequences for the total number of walks of these two
models differ by a power of 2?
Is there a connection with the $2^n$   phenomenon of~\cite{duchi-sulanke}?
\item[---] {\bf Gessel's walks:} although we have not solved this
  case ($\cS=\{x, \bx, xy,\bx\by\}$) in this paper, we cannot resist
  advertising Gessel's former conjecture, which has now become 
  Kauers--Koutschan--Zeilberger's theorem:
$$
q(0,0;2n)=16^n \frac{  (5/6)_n (1/2)_n}{ (5/3)_n (2)_n}.
$$
%
\end{enumerate}
Certain other closed form expressions obtained in this paper are less
mysterious. As discussed at the beginning of Section~\ref{sec:gouyou},
walks with \EE, \WW, \NW\ and \SE\ steps are in bijection with pairs
of non-intersecting walks. The Gessel-Viennot method (which, as our
approach, is  an inclusion-exclusion argument) expresses the
number of walks ending at $(i,j)$ as a 2-by-2
determinant, thus justifying the closed form expressions of
Proposition~\ref{prop:gouyou}. The extension of this theory by
Stembridge~\cite{stembridge-pfaffian} allows one to let $i$, or $j$,
or both $i$ and $j$ vary, and the number of walks is now expressed as
a pfaffian. Hence the closed forms of Corollary~\ref{coro:gouyou} are
not unexpected. 
However, one may try to find direct proofs not involving the
inclusion-exclusion principle. 
Moreover, the following question may be interesting \emm per se,:
\begin{enumerate}
 \item[---]{\bf Walks with \EE, \WW, \NW\ and \SE\ steps:} can one
   explain bijectively why as many $2m$-step walks end on
   the $x$- and $y$-axes? Recall that those ending on the $x$-axis
   were counted bijectively in~\cite{gouyou-chemins-montreal}.
\end{enumerate}

 Another well-understood case 
is that of quarter plane walks with \NN, \EE,
  \SS\ and \WW\ steps. The number of such walks ending at the origin
  is a product of Catalan numbers, and this has been explained
  bijectively, first in a recursive manner~\cite{cori-dulucq-viennot},
  and more recently directly, using again certain planar maps as intermediate
  objects~\cite{bernardi-tree-rooted}. Another argument, based on the
  reflection principle and thus involving minus signs, appears
  in~\cite{guy-bijections} and applies to
  more general endpoints.

\subsection {Explain   algebraic  series}
A related  problem is to explain combinatorially, via a direct
construction, why the three models of
  Section~\ref{sec:half-orbit} have algebraic \gfs. Given the
  connection between Kreweras' walks and planar
  triangulations~\cite{Bern07}, this   could be of the same complexity
  as proving directly that families of 
  planar maps have an algebraic \gf. (Much progress has been made
  recently on this problem by Schaeffer, Di Francesco and their
  co-authors.)
And what about   Gessel's walks 
  (with steps \EE, \WW, \NE\ and \SW), which we have not solved in this
  paper, but have very recently been proved
  to have an algebraic \gf\ as well~\cite{BoKa08}?

\subsection {Models with a vertical symmetry}
When $\cS$ is invariant by a reflection across a vertical axis, the
group $G(\cS)$ has cardinality 4 and Proposition~\ref{prop:orbit-sol}
gives the \gf\ $Q(x,y;t)$ as the positive part of a rational
function. We have not worked out the coefficient extraction in any of
these 16 cases. This may be worth doing, with the hope of finding
 closed form expressions in some cases. However, according
 to~\cite{bostan-kauers},  there is little hope
 to find an algebraic solution for $Q(1,1;t)$.

\subsection {Models with an infinite group}  
Two of the 56 models that are associated with an infinite group
(Table~\ref{tab:infinite}) have been proved to have a non-D-finite
  \gf~\cite{Mishna-Rechni}. 
We conjecture that this holds for all models with an
  infinite group.  This conjecture is based on our experimental
  attempts to discover a differential equation satisfied by the \gf,
  and much strengthened by the further attempts
  of Bostan and Kauers~\cite{bostan-kauers},
which are based on the calculation of 1000 terms of each \gf. 
It also relies on the fact that all
  equations with two catalytic variables and an infinite group that
  have been solved so far have a non-D-finite solution.
How could one prove this conjecture, for instance in the  case
$\cS=\{\NN, \EE, \NE, \SW\}$? 
And what is the importance, if any, that can be attributed to the fact
that the step sets with finite groups either exhibit a symmetry across
the $y$-axis or have a vector sum of zero? 

\subsection{Automatic derivation of differential equations}
Our D-finite but transcendental solutions are expressed as the
positive part (in $x$ and $y$) of a 3-variable rational series in $t$,
$x$ and $y$ (Proposition~\ref{prop:orbit-sol}). Can one derive automatically from these expressions differential
equations satisfied by $Q(0,0;t)$ and $Q(1,1;t)$? This would be a
convenient way to fill in the gap between our work and the
paper~\cite{bostan-kauers}, where differential equations are conjectured for
the series $Q(1,1;t)$.

 For the algebraic solutions of Section~\ref{sec:half-orbit},
it is easy  to derive from our results first an algebraic equation
satisfied by  $Q(0,0;t)$ (or $Q(1,1;t)$), and then a differential
equation satisfied by this series, using the {\sc{Maple}} package
{\sc{Gfun}}~\cite{gfun}.

\subsection {Variations and extensions}
%
%
It is natural to ask to which similar problems the approach used
in this paper could be adapted. To make this question more precise,  let us
underline that such problems may involve, for instance, putting
weights on the walks, allowing more general steps, or considering
higher dimensions. However, the very first question is whether Gessel's
model can be dealt with using the ideas of this paper!

\medskip\noindent 
{\bf Weighted paths.} 
One may try to adapt our approach to solve refined enumeration
problems. For instance, some authors have studied the
  enumeration of walks in a wedge,  keeping track not only of the
  number of steps, but also of the number of \emm contacts,, or \emm visits,
  to the boundary
  lines~\cite{buks,Nied05,niederhausen05-bis}. Of course, other
  statistics could be considered.

Another natural way to add weights, of a more probabilistic nature, consists in
studying Markov chains confined to the quarter plane. The weight of a
walk is then its probability. An entire book is devoted
to the determination of the \emm stationary
distributions, of such chains~\cite{fayolle-livre}. These
distributions are governed 
by functional equations similar to ours, but \emm without the length
variable $t$, (since only the stationary regime is considered). This
difference makes the problem rather different in nature, and indeed, the tools
involved in~\cite{fayolle-livre} are much
more analytic than algebraic. A natural way to set the problem back in
the algebraic playground (to which our power series methods belong) is to
keep track of the length of the 
trajectories, which boils down  to studying  the law of the chain at
time $n$. This was done in~\cite{Bous05} for  a probabilistic version
of Kreweras' walks, using a variant of the
method presented in this paper. An asymptotic analysis of the
solution should then yield the limiting/stationary
distribution. This was 
however \emm not, done in~\cite{Bous05}. Instead, we enriched our algebraic
approach  with a few basic analytic arguments  to solve directly the
equation that describes the stationary distribution. This solution is
in our opinion more elementary  than the original 
ones~\cite{fayolle-livre,flatto-hahn}. It is worth noting that the bivariate
series that describes the stationary distribution is algebraic, and that this Kreweras chain is actually
the main algebraic example of~\cite{fayolle-livre}. In view in the
results  presented in this paper, it is natural to
ask if one could  design probabilistic versions of the 
other three algebraic models (Propositions~\ref{prop:reverse-kreweras}
and~\ref{prop:double-kreweras}, plus Gessel's model)  that would also
yield algebraic stationary distributions.

Conversely, it is natural to ask whether certain tools
from~\cite{fayolle-livre}, other than the group of the walk, could be
adapted to our power series context. 
We are thinking in particular of
the material of Chapter 4, which is devoted to the case where the
group of the walk is finite, and (under this hypothesis) to the
conditions under which the 
stationary distribution has an algebraic \gf.

\medskip\noindent 
{\bf More general steps.} The fact that we only allow ``small'' steps
plays a crucial role in our approach, and more precisely in the
definition of the group of the walk (Section~\ref{sec:group}). However, this
does not mean that models with larger steps are definitely beyond
reach. First, it is always possible to write a functional equation
for the series $Q(x,y;t)$, based on a step-by-step construction of the
walk. If no step has a coordinate smaller than $-1$, the right-hand
side of the equation only involves $Q(0,y;t)$ and $Q(x,0;t)$, but
otherwise more unknown functions, depending exclusively on $x$ or
$y$, appear. Another important difference with the present setting is
that the kernel has now degree larger than 2 (in $x$ or $y$). One can
still define a group, but acting on pairs $(x,y)$ than cancel the
kernel. We refer to~\cite{BoPe03} for the solution of a simple example, with
steps $(2,-1)$ and $(-1,2)$.

\medskip\noindent 
{\bf Higher dimension.}
Finally, a natural question is to address 3-dimensional problems as
those studied experimentally in~\cite{bostan-kauers}. Provided one
focusses on walks with small steps, the key
ingredients of our approach --- the functional equation and the group
of the walk --- can indeed be adapted in a straighforward manner to
this higher-dimensional context.

\section{Tables}\label{sec:tables}
The tables below list the 79 step sets $\cS$ we consider, classified
according to the cardinality of the group $G(\cS)$. The first three tables
contain sets for which $G(\cS)$ has cardinality 4, 6 and 8
respectively. The orbit of $(x,y)$ under the action of this group is
listed in the second 
column. The third column lists the steps of $\cS$. The fourth one displays
the numbers $q(1,1;n)$ and $q(0,0;n)$ that respectively count all
quarter plane walks and quarter plane walks ending at the origin. We
have given the reference of these sequences when they appear in the
 Encyclopedia of Integer Sequences~\cite{sloane}. In
the rightmost column, we give references on this model, both in this
paper and in other papers.
%


\newpage
\begin{longtable}[t]{|c|c|c|p{6cm}|l|}\hline
$\#$&$G(\cS)$ & ${\cS}$ &\begin{minipage}{4cm}\mbox{}\\$q(1,1;n)$\\$q(0,0;n)$\\\end{minipage}& References\\ \hline
\TE{1}{10101010}%
{\multirow{4}{*}{%
\begin{minipage}{2cm}
\vfill 
$(x,y)$,$(\bar x, y)$,\\$(\bar x, \bar y)$,$(x, \bar y)$
\vfill
\end{minipage}}}
{1, 2, 6, 18, 60, 200, 700, 2450, 8820, 31752 (A005566)}
{1, 0, 2, 0, 10, 0, 70, 0, 588, 0, 5544, 0, 56628 (A005568)}%
{Transcendental D-finite}
{Prop.~\ref{prop:orbit-sol}, \cite{bousquet-versailles,cori-dulucq-viennot,bernardi-tree-rooted,gessel-zeilberger,guy-bijections}}%
\TE{2}{01010101}{}%
{1, 1, 4, 9, 36, 100, 400, 1225, 4900, 15876 (A018224)}
{1, 0, 1, 0, 4, 0, 25, 0, 196, 0, 1764, 0, 17424 (A001246)}%
{Transcendental D-finite}%
{Prop.~\ref{prop:orbit-sol}, \cite{bousquet-versailles,gessel-zeilberger,poulalhon-schaeffer}}%
\TE{3}{11011101}{}%
{1, 2, 10, 39, 210, 960, 5340, 26250, 148610, 761796}
{1, 0, 2, 0, 18, 0, 255, 0, 4522, 0, 91896, 0, 2047452}
{D-finite}
{Prop.~\ref{prop:orbit-sol}, \cite{bousquet-versailles,gessel-zeilberger}}
\TE{4}{11111111}{}%
{1, 3, 18, 105, 684, 4550, 31340, 219555, 1564080}
{1, 0, 3, 6, 38, 160, 905, 4830, 28308, 166992}
{D-finite}
{Prop.~\ref{prop:orbit-sol},  \cite{bousquet-versailles,gessel-zeilberger}}
\hline
\TsE{5}{01001001}{
$(x,y),(\bar x,y)$,\\ $(\bar x, \bar y \frac{1}{x+\bar x})$,\\ $(x,  \bar y \frac{1}{x+\bar x}) $}
{1, 1, 3, 7, 19, 49, 139, 379, 1079, 3011}
{1, 0, 0, 0, 2, 0, 0, 0, 28, 0, 0, 0, 660, 0, 0, 0, 20020}
{Transcendental D-finite}
{Prop.~\ref{prop:orbit-sol}, \cite{bousquet-versailles,Mishna-jcta}}
\TE{6}{01101011}{}
{1, 2, 9, 34, 151, 659, 2999, 13714, 63799, 298397, 1408415, 6678827 }
{1, 0, 1, 3, 4, 20, 65, 175, 742, 2604, 9072, 36960,139392 }
{D-finite}
{Prop.~\ref{prop:orbit-sol}, \cite{bousquet-versailles}}
\hline
\TsE{7}{11001001}{
$(x,y),(\bar x,y)$,\\
$(\bar x,\bar y\,\frac {1}{x+1+\bar x})$,\\
$(x,\bar y\,\frac {1}{x+1+\bar x}) $}
{1, 2, 7, 23, 84, 301, 1127, 4186, 15891, 60128, 230334}
{1, 0, 1, 0, 4, 0, 20, 0, 126, 0, 882, 0, 6732}
{D-finite}
{Prop.~\ref{prop:orbit-sol}, \cite{bousquet-versailles}}
\TE{8}{11101011}{}
{1, 3, 15, 74, 392, 2116, 11652, 64967, 365759, 2074574}
{1, 0, 2, 3, 12, 40, 145, 560, 2240, 9156, 38724, 166320, 728508}
{D-finite}
{Prop.~\ref{prop:orbit-sol}, \cite{bousquet-versailles}}
\hline
\TsE{9}{11010101}{
$(x,y),(\bar x, y)$,\\  
$(\bar x, \bar y\, \frac {x+\bar x}{x+1+\bar x})$,\\ 
$(x, \bar y\, \frac {x+\bar x}{x+1+\bar x})$}
{1, 2, 8, 29, 129, 535, 2467, 10844, 50982, 231404}
{1, 0, 1, 0, 6, 0, 55, 0, 644, 0, 8694, 0, 128964}
{D-finite} 
{Prop.~\ref{prop:orbit-sol}, \cite{bousquet-versailles}}
\TE{10}{11110111}{}
{1, 3, 16, 86, 509, 3065, 19088, 120401, 771758}
{1, 0, 2, 3, 20, 60, 345, 1400, 7770, 36876, 204876}
{D-finite}
{Prop.~\ref{prop:orbit-sol}, \cite{bousquet-versailles}}
\hline
\TsE{11}{10011100}{
$(x,y),(\bar x,y)$,\\
$(\bar x,\bar y\,(x+1+\bar x))$,\\
$(x,\bar y\,(x+1+\bar x))$
}
{1, 1, 3, 5, 17, 34, 121, 265, 969, 2246, 8351, 20118}
{1, 0, 1, 0, 4, 0, 20, 0, 126, 0, 882, 0, 6732}
{D-finite}
{Prop.~\ref{prop:orbit-sol}, \cite{bousquet-versailles}}
\TE{12}{10111110}{}
{1, 2, 7, 26, 105, 444, 1944, 8728, 39999, 186266}
{1, 0, 2, 3, 12, 40, 145, 560, 2240, 9156, 38724, 166320, 728508}
{D-finite}
{Prop.~\ref{prop:orbit-sol}, \cite{bousquet-versailles}}
\hline
\TsE{13}{01011101}{
$(x,y), (\bar x,y)$,\\ 
$(\bar x,\bar y\, \frac{x+1+\bar x}{x+\bar x})$,\\
$(x,\bar y\, \frac{x+1+\bar x}{x+\bar x})$}%
{1, 1, 5, 13, 61, 199, 939, 3389, 16129, 61601, 295373}
{1, 0, 1, 0, 6, 0, 55, 0, 644, 0, 8694, 0, 128964}%
{D-finite}
{Prop.~\ref{prop:orbit-sol}, \cite{bousquet-versailles}}
\TE{14}{01111111}{}
{1, 2, 11, 49, 277, 1479, 8679, 49974, 301169, 1805861}
{1, 0, 2, 3, 20, 60, 345, 1400, 7770, 36876, 204876}
{D-finite}
{Prop.~\ref{prop:orbit-sol}, \cite{bousquet-versailles}}
\hline
\TsE{15}{10010100}{
$(x,y),(\bar{x},y)$,\\
$(\bar x,\bar{y}(x+\bar{x}))$,\\
$(x,\bar{y}(x+\bar{x}))$}
{1, 1, 2, 3, 8, 15, 39, 77, 216, 459, 1265, 2739, 7842}
{1, 0, 0, 0, 2, 0, 0, 0, 28, 0, 0, 0, 660}
{Transcendental D-finite}
{Prop.~\ref{prop:orbit-sol}, \cite{bousquet-versailles,Mishna-jcta}}
\TE{16}{10110110}{}
{1, 2, 6, 21, 76, 290, 1148, 4627, 19038, 79554, 336112}
{1, 0, 1, 3, 4, 20, 65, 175, 742, 2604, 9072, 36960}
{D-finite}
{Prop.~\ref{prop:orbit-sol}, \cite{bousquet-versailles}}
\hline
\caption{The group $G(\cS)$ is isomorphic to $D_2$.  
These 16 models have a D-finite generating function.} 
\label{tab:classesD2}
\end{longtable}
%

\begin{longtable}[t]{|c|c|c|p{6cm}|l|}\hline
$\#$&$G(\cS)$ & ${\cS}$ &\begin{minipage}{4cm}\mbox{}\\$q(1,1;n)$\\$q(0,0;n)\\$\end{minipage}& References\\ \hline
\TtE{1}{10010010}{ 
$(x,y), (\bar x y,y)$,\\  
$(\bar x y, \bar x), (\bar y, \bar x)$,\\
$(\bar y, \bar y x), (x,\bar y x)$
}
{1, 1, 2, 4, 9, 21, 51, 127, 323, 835, 2188, 5798 (A001006)}
{1, 0, 0, 1, 0, 0, 5, 0, 0, 42, 0, 0, 462 (A005789)}
{Algebraic}
{Prop.~\ref{prop:tandem}, \cite{gessel-zeilberger}}

\TE{2}{10111011}{} 
{1, 2, 8, 32, 144, 672, 3264, 16256, 82688 (A129400)}
{1, 0, 2, 2, 12, 30, 130, 462, 1946, 7980, 34776, 153120}
{Algebraic} 
{Prop.~\ref{prop:double-tandem}, \cite{gessel-zeilberger}}
\hline
\TtE{3}{01001010}{
$(x,y), (\bar x \bar y,y)$,\\ 
$(\bar x \bar y, x)$, $(y,x)$,\\
$(y,  \bar x \bar y),   (x,\bar x\bar y)$}
{1, 1, 3, 7, 17, 47, 125, 333, 939, 2597, 7183}
{1, 0, 0, 2, 0, 0, 16, 0, 0, 192, 0, 0, 2816 (A006335)}
{Algebraic}
{Prop.~\ref{prop:kreweras}, \cite{kreweras, Bous05}}
\TE{4}{10100100}{} 
{1, 2, 4, 10, 26, 66, 178, 488, 1320, 3674, 10318}
{1, 0, 0, 2, 0, 0, 16, 0, 0, 192, 0, 0, 2816 (A006335)}
{Algebraic}
{Prop.~\ref{prop:reverse-kreweras}, \cite{Mishna-jcta}}

\TE{5}{11101110}{} 
{1, 3, 14, 67, 342, 1790, 9580, 52035, 285990}
{1, 0, 3, 4, 26, 80, 387, 1596, 7518, 34656, 167310}
{Algebraic}
{Prop.~\ref{prop:double-kreweras}}
\hline 
\caption{The group $G(\cS)$ is isomorphic to $D_3$.  These five models have a D-finite generating function, and the last three  even have an algebraic \gf.}
\label{tab:classesD3}
\end{longtable}

\newpage
\begin{longtable}[t]{|c|c|c|p{6cm}|l|}\hline
$\#$&$G(\cS)$ & ${\cS}$ &\begin{minipage}{4cm}\mbox{}\\$q(1,1;n)$\\$q(0,0;n)\\$\end{minipage}& References\\ \hline
\TE{1}{00110011}  
{
\begin{minipage}{3cm}
\mbox{}\\$(x, y), (y\bar x, y),$\\ $(y\bar x, y{\bar x}^2), (\bar x, y{\bar x}^2)$,\\
$ (\bar x, \bar y), (x\bar y, \bar y),$\\$(x\bar y, x^2\bar y), (x,\bar y x^2)$\\
\end{minipage}}
{1, 1, 3, 6, 20, 50, 175, 490, 1764, 5292 (A005558)}
{1, 0, 1, 0, 3, 0, 14, 0, 84, 0, 594, 0, 4719 (A005700)}
{Tran. D-finite}
{Prop.~\ref{prop:gouyou}, \cite{gessel-zeilberger,gouyou-chemins-montreal,Nied05}}
\hline
\TE{2}{01100110} 
{\begin{minipage}{3cm}
\mbox{}\\$(x, y), (\bar x \bar y, y),$\\ $(\bar x \bar y, x^2y), (\bar x, x^2y)$,\\
$ (\bar x, \bar y), (xy, \bar y),$\\ $(xy, {\bar x}^2 \bar y), (x, \bar y {\bar x}^2)$\\
\end{minipage}
}
{1, 2, 7, 21, 78, 260, 988, 3458, 13300, 47880 (A060900)}
{1, 0, 2, 0, 11, 0, 85, 0, 782, 0, 8004, 0, 88044 (A135404)}
{Algebraic}
{\cite{KaKoZe08, BoKa08,petkovsek-wilf}}
\hline 

\caption{The group $G(\cS)$ is isomorphic to $D_4$. Both models have a D-finite generating function, and the second one even has an algebraic \gf.}
\label{tab:classesD4}
\end{longtable}

\begin{table}[h] \center
\begin{tabular}[t]{cccccccccc}\hline
\\ 
\tpic{10010001}&\tpic{01010001}& \cite{Mishna-Rechni}\\\\\hline 
\\ 
\tpic{10101100}&\tpic{10110100}&\tpic{11100100}&\tpic{11001010}&\tpic{11010100}&\tpic{10110010}&\tpic{10011010}&\tpic{10010110}&\tpic{10010101}\\ 
\tpic{10010011}&\tpic{10110001}&\tpic{11010010}&\tpic{11010001}&\tpic{01001110}&\tpic{01001101}&\tpic{01001011}&\tpic{01011001}\\\\\hline 
\\ 
\tpic{11100101}&\tpic{11101001}&\tpic{11110001}&\tpic{10111010}&\tpic{10101110}&\tpic{11100110}&\tpic{11001110}&\tpic{11001101}&\tpic{11001011}\\ 
\tpic{11011001}&\tpic{11010110}&\tpic{11010011}&\tpic{11011100}&\tpic{10011110}&\tpic{10011101}&\tpic{10011011}&\tpic{10010111}&\tpic{10111100}\\ 
\tpic{11101010}&\tpic{10111001}&\tpic{10110101}&\tpic{01001111}&\tpic{01011011}\\\\\hline 
\\ 
\tpic{11111100}&\tpic{11110110}&\tpic{11110011}&\tpic{11110101}&\tpic{10110111}&\tpic{10011111}&\tpic{11001111}&\tpic{11011110}&\tpic{11011011}\\ 
\tpic{11010111}&\tpic{01011111}\\\\\hline 
\\ 
\tpic{11101111}&\tpic{10111111}&\tpic{11111011}\\\\\hline 
\\
\end{tabular}

\caption{These 56 step sets are associated with an infinite group.}
\label{tab:infinite}
\end{table}

\begin{longtable}{clll}
 & Fixed point  & Condition & $\cchi(X)$
\\ \hline
\\
Four steps\\
\hline
\tabB{1}{10101100}{\left(a,{a}^{-2}\right)}{{a}^{4}+{a}^{3}-1}{ X^8+9X^7+31X^6+62X^5+77X^4+62X^3+31X^2+9X+1}
\tabB{2}{10110100}
%
{\big(a,{
\frac {1-{a}^{2}}{{a}^{2}}}\big)}
{ {a}^{5}-{a}^{4}+{a}^{3}+2\,{a}^{2}-1 }
%
%
{{X}^{10}+18\,{X}^{9}+125\,{X}^{8}+439\,{X}^{7}+897\,{X}^{6}+1131\,{X}^
{5}+897\,{X}^{4}+439\,{X}^{3}+125\,{X}^{2}+18\,X+1}
\tabB{3}{11100100}{\left(a,a\right)}{a^4+a^3-1}{X^8-19X^7-X^6-124X^5+3X^4-124X^3-X^2-19X+1}
\tabB{4}{11010100}{\left(b^2/2,b\right)}{ 
b^6+b^4-4 }{{X}^{6}+9\,{X}^{5}+59\,{X}^{4}+70\,{X}^{3}+59\,{X}^{2}+9\,X+1}
\\ \\
Five steps\\
\hline
\tabB{6}{11100101}{\left(-1,1\right)}{}{2\,{X}^{2}+3X+2}
\tabB{7}{11101001}{
\left(a, \frac{a^2}{1-a^2}\right)}
%
{a^5+a^3+2a^2-1}
%
%
%
{{X}^{10}+16\,{X}^{9}+106\,{X}^{8}+371\,{X}^{7}+764\,{X}^{6}+967\,{X}^{
5}+764\,{X}^{4}+371\,{X}^{3}+106\,{X}^{2}+16\,X+1}
 \tabB{8}{10101110}{\left(a,a\right)}
%
{a^3-a-1}{{X}^{6}+8\,{X}^{5}+28\,{X}^{4}+41\,{X}^{3}+28\,{X}^{2}+8\,X+1
}
\tabB{9}{11100110}{\left(a,a^{-2}\right)}{a^3-a-1}{{X}^{6}+2\,{X}^{5}+6\,{X}^{4}+5\,{X}^{3}+6\,{X}^{2}+2\,X+1}
\tabB{10}{11001101}{
\left(\frac{2b^2}{1-b^2},b\right)}{3b^6+b^4+b^2-1}{
 {X}^{6}+14\,{X}^{5}+87\,{X}^{4}+100\,{X}^{3}+87\,{X}^{2}+14\,X+1
}
\tabB{11}{11010110}{\left(a,1/a\right)}{a^2+a+1}
{{X}^{4}+17\,{X}^{3}+81\,{X}^{2}+17\,X+1}
%
\\ \\
Six steps\\
\hline
\tabB{13}{11111100}{\left(a,2+a-3a^2-3a^3\right)}{3a^6+6a^5+2a^4-5a^3-4a^2+1}
%
{{X}^{12}+23\,{X}^{11}+283\,{X}^{10}+1861\,{X}^{9}+7461\,{X}^{8}+14225\,{X}^{7}
+18249\,{X}^{6}+14225\,{X}^{5}+7461\,{X}^{4}+1861\,{X}^{3}+283\,{X}^{2}+23\,X+1}
\\
\tabB{14}{11110110}{
\left(\frac{2+2b-b^3}{b^2(1+b+b^2)},b\right)}
{ b^7+b^6+2b^5+5b^4+4b^3-4b^2-8b-4}
%
%
{{X}^{14}+37\,{X}^{13}+567\,{X}^{12} +4853\,{X}^{11}+26197\,{X}^{10}
+89695\,{X}^{9} +194611\,{X}^{8}+250446\,{X}^{7}+194611\,{X}^{6}+89695\,{X}^{5}
+26197\,{X}^{4}+4853\,{X}^{3}+567\,{X}^{2}+37\,X+1\\}
\tabB{15}{11110101}{\left(a,a\right)}{a^4+a^3-1}{X^8+117X^7+3671X^6+13396X^5+19683X^4+13396X^3+3671X^2+117X+1}
\\ \\
Seven steps\\
\hline
 \tabB{16}{11101111}{\left(a,1/a\right)}{a^3-a-1}
{{X}^{6}+28\,{X}^{5}+224\,{X}^{4}+345\,{X}^{3}+224\,{X}^{2}+28\,X+1}

\\
\caption{Proving that $G(\cS)$ is infinite.}
\label{tab:ffpt}
\end{longtable}

%

\bigskip
\noindent
{\bf Acknowledgements.} We are grateful to Michael Albert, Jason Bell,
Pierrette Cassou-Nogues, Julie D\'eserti, Arnaud Jehanne and Andrew
Rechnitzer for various discussions and  advice related to this paper.



\bibliographystyle{plain}

\bibliography{qdp}

\end{document}